\documentclass[a4paper]{amsart}
\usepackage[utf8]{inputenc}
\usepackage[T1]{fontenc}
\usepackage{amscd,amsthm,amsfonts,amssymb,esint}
\usepackage{amsmath}
\usepackage[colorlinks=true]{hyperref}
\usepackage{fullpage}
\usepackage[all]{xy}
\usepackage{mathdots}
\usepackage{mathrsfs}
\usepackage{tikz-cd}
\usepackage{float}

    \newcommand{\BQ}{{\mathbb {Q}}}

     \newcommand{\BZ}{{\mathbb {Z}}}

    \newcommand{\CO}{{\mathcal {O}}} \newcommand{\CP}{{\mathcal {P}}}
     \newcommand{\CR}{{\mathcal {R}}}
    \newcommand{\CS}{{\mathcal {S}}} \newcommand{\CT}{{\mathcal {T}}}

     \newcommand{\fp}{{\mathfrak{p}}}

    \newcommand{\der}{{\mathrm{der}}}
    \newcommand{\diag}{{\mathrm{diag}}}

     \newcommand{\GL}{{\mathrm{GL}}}
    
    \newcommand{\Hom}{{\mathrm{Hom}}}

    \newcommand{\SL}{{\mathrm{SL}}}
     
    \newcommand{\SO}{{\mathrm{SO}}}\newcommand{\Sp}{{\mathrm{Sp}}}

\newcommand{\matrixx}[4]{\begin{pmatrix}
#1 & #2 \\ #3 & #4
\end{pmatrix} }        

    \newcommand{\sk}{\medskip}
    \newcommand{\lra}{\longrightarrow}
    \newcommand{\ra}{\rightarrow} 
    \newcommand{\bs}{\backslash}
    
    \newcommand{\s}{\sk\noindent}

    \theoremstyle{plain}
    \newtheorem{thm}{Theorem}[section] \newtheorem{cor}[thm]{Corollary}
    \newtheorem{lem}[thm]{Lemma}  \newtheorem{prop}[thm]{Proposition}
     \newtheorem{defn}[thm]{Definition}

\theoremstyle{remark} \newtheorem{remark}[thm]{Remark}
\theoremstyle{remark} 
\theoremstyle{remark} \newtheorem{example}[thm]{Example}

    \numberwithin{equation}{section}

     \newcommand{\Kl}{{\mathrm{Kl}}}

\begin{document}

\title{Stevens' Approach to Kloosterman Sums on Split Groups}

\author{Li Cai} 
\address[L.C.]{Academy for Multidisciplinary Studies, Beijing National Center for Applied Mathematics, Capital Normal University, Beijing, 100048, People's Republic of China
} 
\email{caili@cnu.edu.cn}

\author{Haoran Liu}
\address[H.L.]{Department of Mathematics Science, Tsinghua University, Beijing, 100084, People's Republic of China}
\email{1138909870@qq.com}

\author{Mingyu Yang}
\address[M.Y.]{School of Mathematics, Nanjing University, Jiangsu, 210093, People's Republic of China}
\email{1010850347@qq.com}

\author{Wenyan Yang}
\address[W.Y.]{School of Mathematical Sciences, University of Scinece and Technology of China, 230026, Hefei, P. R. China}
\email{ywy72@mail.ustc.edu.cn}

\begin{abstract}
We develop a uniform framework for  Stevens' approach to Kloosterman sums on split groups over $p$-adic fields. The main result reduces the bounding of Kloosterman sums to that for the so-called Plücker-constrained sets, which are certain subsets of affine spaces constrained by conditions giving by the Plücker coordinates. We then give a nontrivial bound for the Kloosterman sums
on the exceptional group $G_2$. As an application, we obtain the regularity of Bessel distributions associated with  generic representations on $G_2$.
\end{abstract}

\subjclass[2020]{Primary:11L05, 22E50; Secondary: 22E35, 11F70}

\keywords{
Whittaker models, Bessel distributions, Kloosterman sums}

\maketitle
\tableofcontents

\section{Introduction}\label{sec:intro}

\subsubsection{Kloosterman sums}

Kloosterman sums play an important role in analytic number theory and the theory of automorphic forms. They arise naturally in the Fourier expansions of Poincaré series and on the geometric side of the Kuznetsov trace formula. A central problem is to obtain nontrivial bounds for Kloosterman sums.

Let $F$ be a $p$-adic field with ring of integers $\mathcal O$, maximal ideal $\mathfrak p$, residue field cardinality $q=|\mathcal O/\mathfrak p|$. We also fix a uniformizer $\varpi$ of $F$.

Let $G$ be a split reductive group over $F$. Let
$B = AN$ be a Borel subgroup of $G$ with $A$ a maximal torus of $G$ and
$N$ the unipotent radical of $B$. Denote by $\delta$ the modulus character of $B$. Let $\psi$ be a generic character on $N$. Fix a split reductive $\mathcal O$-model of $G$ compatible with $B=AN$, and choose a representative $\dot w_0\in N_G(A)(\mathcal O)$ of the longest Weyl element. All notation involving $\mathcal O$-points refers to this model.

For an open compact subgroup $K$ of $G$ such that $\psi|_{N \cap K} = 1$, we consider the following Kloosterman sum 
\[\Kl(t) =\Kl(t;\psi,\dot{w_0},K) = \sum_{x \in X_K(\dot{w_0}t)} \psi(u(x))\psi(u'(x)), \quad t \in A.\]
Here, the Kloosterman set is
\[X_K(\dot{w_0}t) = (N\cap K)\bs \left(N\dot{w_0}t N \cap K \right)/(N\cap K)\]
and the two maps are
\[u: X_K(\dot{w_0}t) \ra (N\cap K)\bs N, \quad u': X_K(\dot{w_0}t) \ra N/(N\cap K).\]
In the above, if $[n_1\dot{w_0}tn_2] \in X_K(\dot{w_0}t)$ with $n_1, n_2 \in N$ and $t \in A$, then 
\[u([n_1\dot{w_0}tn_2]) = [n_1], \quad u'([n_1\dot{w_0}tn_2]) = [n_2].\]
	
We say that the Kloosterman sum $\mathrm{Kl}$ has a nontrivial bound (\cite[Definition 3.17]{CCL26}) if there exist constants $\varepsilon>0$ and $C>0$ (possibly depending on $p$) such that
\[\left|\mathrm{Kl}(t)\cdot\delta^{1/2-\varepsilon}(t)\right| \leq C, \quad t \in A \cap G^\der.\]

\begin{example}\label{exam-GL_2}
Consider the case $G = \SL_2$ with $F = \BQ_p$. Take $\psi\left[ \matrixx{1}{x}{0}{1} \right] = \psi_{\BQ_p}(x)$ 
where $\psi_{\BQ_p}(x) = e^{-2\pi i\iota(x)}$ and $\iota: \BQ_p/\BZ_p \hookrightarrow \BQ/\BZ$ is the natural embedding.
Let $\dot{w_0} = \matrixx{0}{-1}{1}{0}$ and let $K = G(\BZ_p)$ be a maximal compact open subgroup of $G$. For $a =
\matrixx{p^r}{0}{0}{p^{-r}} \in A$ with $r>0$, we have
$\delta(a) = p^{-2r}$ and
\[\mathrm{Kl}(a) = \mathrm{Kl}(r) = \sum_{x \in \BZ_p^\times/1+p^r\BZ_p} \psi_{\BQ_p}\left( \frac{x+x^{-1}}{p^r} \right), \quad r> 0.\]

Using the Riemann hypothesis for curves, Weil proves that
\cite{Wei48}
\[|\Kl(1)| \leq 2p^{1/2}.\]
For $r \geq 2$, Salié \cite{Sal32} expresses $\Kl(r)$ explicitly in terms of the cosine function and proves that
\[|\Kl(r)| \leq 2p^{r/2}.\]
\end{example}

Nontrivial bounds for Kloosterman sums have significant applications in both global and local aspects of the theory of automorphic representations. 

 Building on Petersson's work, Selberg developed the theory of Poincaré series on $\GL_2$ and used their Fourier expansions, involving Kloosterman sums, to study the Ramanujan conjecture \cite{Sel65}. The extension to higher rank was initiated by Bump, Friedberg, and Goldfeld, who studied Poincaré series and the associated Kloosterman sums for $\mathrm{GL}_3$ \cite{BFG86,BFG88}. Subsequently, Friedberg \cite{Fri86,Fri87} and Stevens \cite{Ste87} simultaneously and independently developed the theory for general $\mathrm{GL}_n$.

Cogdell, Li, Piatetski-Shapiro and Sarnak \cite{CLPS91} considered Poincaré series for $\mathrm{SO}(n,1)$ and studied the associated Kloosterman zeta functions. By applying Weil's bound to certain Kloosterman sums, they obtained estimates for the exceptional spectrum of congruence arithmetic hyperbolic manifolds. 

More recently, Assing and Blomer \cite{AB24} proved Sarnak's density conjecture for the principal congruence subgroups of $\mathrm{SL}_n(\mathbb{Z})$ of squarefree level. An essential ingredient in their proof is a nontrivial bound for Kloosterman sums on $\SL_n$ associated with a certain Weyl element (which is different from the longest Weyl element).

For local harmonic analysis, the regularity of
Bessel distributions for generic local representations is closely related to nontrivial bounds for Kloosterman sums \cite{CCL26}. The regularity for $G = \GL_n$ is established
by a nontrivial bound for Kloosterman sums on $\GL_n$ 
with respect to admissible compact open subgroups.

\subsubsection{Main results}

Stevens' approach \cite{Ste87} is a basic method for bounding Kloosterman
sums. Let $\{K_d\}_d$ be the family of principal congruence subgroups of $G$, and let $\{\Kl_d\}_d$ be the corresponding Kloosterman sums (Section \ref{sect:kloosterman}). The approach can be divided into two steps.

\s{\bf (Step 1)} By considering the action of $A_d = A \cap K_d$ on $K_d$
    for $d$ large enough, the Kloosterman sum for $K_d$ can be decomposed into
    a product of Kloosterman sums on $\GL_2$. Applying the Weil bound for
    these Kloosterman sums on $\GL_2$, the problem of bounding $\Kl_d$
    is reduced to
    a counting problem for the Kloosterman set. This step is given in 
    \cite{Ste87} for $G = \GL_n$ and \cite{CCL26} for general split groups (Theorem \ref{thm:fibre-implies-kl}).
    
\s{\bf (Step 2)} One then uses Pl\"ucker coordinates for the counting problem. This step has been studied by Stevens \cite{Ste87} for $\GL_3$, Man \cite{Man22,Man24} for $\Sp_4$, Huang \cite{GSW21} for $\GL_4$, Mujdei \cite{Muj24} for $\SO_6$ and Miao \cite{Mia24} for $\GL_n$ with $F = \BQ_p$.

In this paper, we give a uniform treatment of (Step 2) for general split
groups. The main result reduces the problem of bounding Kloosterman sums to counting points in
Pl\"ucker-constrained sets, which are subsets of affine spaces defined by conditions on Pl\"ucker coordinates.

We now state our main result concerning Stevens' approach. For this, we assume that $G$ is semisimple and simply connected. Denote by $\Phi^+$ the set of positive roots and by $\Delta$ the set of simple roots. Put $m=|\Delta|$. Let $\alpha_1,\ldots,\alpha_m$ be the simple roots and $\omega_1,\dots,\omega_m$ be the fundamental weights.

For each $i=1,\dots,m$, let $(\pi_{\omega_i},V_{\omega_i})$ be the irreducible representation
of $G$ of highest weight $\omega_i$. Then $\dim (V_{\omega_i}^*)^N = 1$.
Choose a nonzero vector $\ell_i\in(V_{\omega_i}^*)^N$ and
a basis $\{e_{ij}\}_{j=1}^{\dim V_{\omega_i}}$ of $V_{\omega_i}$ consisting of $A$-weight vectors.

For each $1 \leq i \leq m$ and  $1 \leq j \leq \dim V_{\omega_i}$, 
consider the Pl\"ucker coordinates
\[c_{ij}(g) =\left\langle\ell_i,\,\pi_{\omega_i}(g)e_{ij}\right\rangle, \quad g \in G.\]
Then $F[N\bs G]$ is generated by these $c_{ij}$ (Theorem \ref{thm:plucker-coordinates}).

We shall also consider the Pl\"ucker coordinate polynomials
$$p_{ij}(\mathbf x):=\left\langle\ell_i,\,\pi_{\omega_i}\bigl(\dot w_0\,n(\mathbf x)\bigr)e_{ij}\right\rangle,\qquad\mathbf x\in F^{\Phi^+}.$$
In the above, we fix an ordering $\Phi^+=\{\gamma_1,\dots,\gamma_{|\Phi^+|}\}$ by increasing height, and choose Chevalley root subgroup parametrizations $X_\gamma:\mathbb G_a\to N_\gamma$ compatible with the fixed split $\mathcal O$-model.
With respect to this ordering, every element in $N$ can be written uniquely as 
$$n(\mathbf x)=\prod_{\gamma\in\Phi^+}X_\gamma(x_\gamma),\qquad\mathbf x=(x_\gamma)_{\gamma\in\Phi^+}\in F^{\Phi^+}.$$
The Pl\"ucker coordinate polynomials are homogeneous (Theorem \ref{thm:plucker-homogeneity}).

\begin{defn}[Pl\"ucker-constrained set]\label{defn:plucker-constrained}
We fix a section $\CR \subset F$ of $F/\CO$ containing $0$.
We also choose polynomials $q_\gamma \in F[\mathbf x]$ for
each  $\gamma \in \Phi^+$ satisfying
\[q_{\alpha_i} = x_{\alpha_i}, \quad q_\gamma = x_\gamma + f_\gamma, \quad f_\gamma \in F[x_\beta: \beta < \gamma].\]
For $r=(r_1,\dots,r_m)\in\mathbb Z_{\ge0}^m$, define the \emph{Pl\"ucker-constrained set} 
$$S_r=\left\{\mathbf x=(x_\gamma)_\gamma\in F^{\Phi^+}\ \Big|\ x_{\alpha_i}=0,\ v(p_{ij}(\mathbf x))\ge -r_i \text{ for all } i,j, \ q_\gamma(\mathbf x) \in \CR \text{ for all } \gamma \in \Phi^+ \setminus \Delta \right\}.$$
\end{defn}
For each $r$, $S_r$ is a finite set (Corollary \ref{cor:finiteness}). The polynomials $f_\gamma$, $\gamma \in \Phi^+\setminus\Delta$, may be chosen arbitrarily subject to the condition above. This gives us the freedom to change coordinates in the counting problem.

\begin{thm}[Corollary \ref{cor:practical}]\label{main-thm}
Let $G$ be a split semisimple and simply connected group with the Cartan matrix $C_G$ satisfying $p\nmid \det(C_G)$. Assume there exist constants $\varepsilon>0$ and $C>0$ such that 
$$|S_r|\le C\, q^{(1-\varepsilon)\sum_{i=1}^m r_i}, \quad r \in \BZ_{\geq 0}^m.$$
Then for all sufficiently large $d$, the Kloosterman sum $\mathrm{Kl}_d$ has a nontrivial bound.
\end{thm}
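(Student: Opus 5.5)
The plan follows the two-step structure of Stevens' approach. Step 1 is already available as Theorem \ref{thm:fibre-implies-kl}. Via the action of $A_d = A\cap K_d$ and the Weil bound on the resulting $\GL_2$-sums, it reduces a nontrivial bound for $\Kl_d$ (for $d$ large) to a counting estimate for the Kloosterman sets. Concretely, it suffices to show that
\[
|X_{K_d}(\dot w_0 t)| \ll \delta(t)^{-1/2+\varepsilon'}
\]
for some $\varepsilon'>0$, uniformly in $t\in A\cap G^\der$, or the analogous bound for the relevant fibres. So the proof reduces to comparing $|X_{K_d}(\dot w_0 t)|$ with $|S_r|$ for a suitable $r=r(t)$.

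\textbf{Step 1: parametrise the Kloosterman set.} Write an element as $n_1\dot w_0 t n_2$. Modulo $N\cap K_d$ on the right, $n_2$ is determined by its class in $N/(N\cap K_d)$. Choose coordinates $n_2=n(\mathbf x)$ and use the freedom in the $f_\gamma$ to arrange $q_\gamma(\mathbf x)\in\CR$, i.e.\ pick canonical representatives, after rescaling by the torus action so that $N\cap K_d$ corresponds to $\CO$-points. Since $G$ is simply connected, $t$ is determined by $\omega_i(t)$, and we write $|\omega_i(t)|=q^{r_i}$ up to a shift depending on $d$. The condition $n_1\dot w_0 t n_2\in K_d$ then implies, via Theorem \ref{thm:plucker-coordinates}, that the Plücker coordinates $c_{ij}$ of this element are integral. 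The coordinates satisfy $c_{ij}(n_1 g)=c_{ij}(g)$, and $\ell_i$ is a weight vector for $t$. Hence the integrality becomes $v(p_{ij}(\mathbf x))\ge -r_i$, after conjugating $t$ past $\dot w_0$ and $n(\mathbf x)$ using the homogeneity in Theorem \ref{thm:plucker-homogeneity}.

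\textbf{Step 2: handle simple-root coordinates and compare sizes.} The simple-root coordinates $x_{\alpha_i}$ are where the $\GL_2$ Kloosterman sums live, so they are the ones consumed by the Weil bound in Theorem \ref{thm:fibre-implies-kl}. The fibre over fixed simple-root data therefore corresponds to the normalisation $x_{\alpha_i}=0$ in $S_r$, after translating by an element of $A$ or of the root subgroups. This yields an injection of the fibre into $S_{r}$, perhaps with $r$ shifted by a constant depending only on $d$ and $G$. With $|S_r|\le Cq^{(1-\varepsilon)\sum r_i}$, comparing with $\delta(t)^{-1/2}=\prod_i |\omega_i(t)|^{\cdots}$ via $2\rho=\sum_\gamma\gamma$ gives the required saving $q^{-\varepsilon\sum r_i}$. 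The hypothesis $p\nmid\det C_G$ enters here: the lattices spanned by roots and by weights agree over $\CO$, so the valuations $r_i$ of $\omega_i(t)$ are integers, and $A\cap K_d$ acts transitively enough that passing between $t$ and the $r_i$ costs only bounded factors.

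\textbf{Main obstacle.} The hardest step is checking that the injection of the fibre into $S_r$ is well defined and has bounded multiplicity. This requires the Plücker conditions to cut out exactly the double coset membership up to bounded error, which is the completeness statement of Theorem \ref{thm:plucker-coordinates} over $\CO$; this is where $p\nmid\det C_G$ is needed. It also requires the exponent bookkeeping between $\sum r_i$ and $\delta(t)$ to match the normalisation used in Theorem \ref{thm:fibre-implies-kl}. I would first verify the whole chain for $\SL_2$ and $\SL_3$ to pin down the shifts, and then argue in general by weights.
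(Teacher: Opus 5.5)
\textbf{There is a genuine gap in your Step 2.} You claim that the fibre over fixed simple-root data injects, with bounded multiplicity, into $S_{r+O(1)}$. This fails.

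The fibre $X(\dot w_0t)_y$ consists of classes whose right factor $n_2=n(\mathbf x)$ has simple-root coordinates of depth exactly $-y_i$. The Weil bound does not remove these coordinates from the count. It only supplies the weight $q^{\frac12\sum_iy_i}$ in Stevens' estimate, and the full cardinality $|X_d(\dot w_0t)_y|$ must still be bounded. Any normalisation that makes the simple-root coordinates vanish identifies many distinct classes of $N/N_0$, so no injection results. This holds whether you translate by $X_{\alpha_i}(-x_{\alpha_i})\notin N_0$, or rescale by a torus element and pass to a normal form.

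In fact no $y$-uniform bound $|X(\dot w_0t)_y|\ll|S_{r+C}|$ can hold. For $G=\SL_2$ one has $S_r=\{0\}$ for every $r$. But for $t=\alpha^\vee(\varpi^r)$ the whole Kloosterman set lies in the single fibre $y=-r$ and has $\asymp q^r$ elements. The same example shows that your opening reduction to $|X_{K_d}(\dot w_0t)|\ll\delta(t)^{-1/2+\varepsilon'}$ is false: for $\SL_2$ the full set has exactly $q^r=\delta(t)^{-1/2}$ elements once $r\ge d$.

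\textbf{What is missing} is the paper's perturbation argument combined with a shallow/deep split in $y$.
\begin{enumerate}
\item Reduce to $d=0$, losing only a factor $|N_0/N_d|$ (Proposition~\ref{prop:reduct-d0}).
\item Bound $|X(\dot w_0t)_y|\le|\mathcal T_{y,r}/{\sim}|$ (Lemma~\ref{lem:X-fibre-bound}). This needs only the one-way implication ``$n_1\dot w_0tn(\mathbf x)\in K_0\Rightarrow v(p_{ij}(\mathbf x))\ge-r_i$'' and injectivity of $u'$. It does not need an exact Pl\"ucker description of the double coset.
\item Rescale $x_\gamma\mapsto\varpi^{-\sum_i\kappa_{\alpha_i}(\gamma)y_i}x_\gamma$. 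By homogeneity of the $p_{ij}$, this sends $\mathcal T_{y,r}$ into $\mathcal T_{\mathbf 0,r}$ compatibly with $\sim$. However, its fibres on classes can contain up to $q^{-\sum_i\rho(\alpha_i)y_i}$ elements, where $\rho(\alpha_i)=\sum_{\gamma\in\Phi^+}\kappa_{\alpha_i}(\gamma)$ (Lemma~\ref{lem:Tyr-to-T0r}).
\item Only then do the normal form $B_{\mathcal R}$ and Lemma~\ref{lem:shift} give $|\mathcal T_{\mathbf 0,r}/{\sim}|\le|S_{r+C_0\mathbf 1}|$.
\end{enumerate}
The loss $q^{-\sum_i\rho(\alpha_i)y_i}$ is affordable only for shallow $y$, namely $\sum_iy_i\ge-\varepsilon'\sum_ir_i$ with $\varepsilon'=\varepsilon/(2\rho_{\max})$ and $\rho_{\max}=\max_i\rho(\alpha_i)$. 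For deeper $y$ one abandons $S_r$ entirely (Proposition~\ref{prop:shallow-implies-fibre}). Instead one combines the D\k{a}browski--Reeder bound $|X_0(\dot w_0t)|\ll q^{\sum_ir_i}(1+\sum_ir_i)^{|\Phi^+|}$ with the Weil weight $q^{\frac12\sum_iy_i}<q^{-\frac{\varepsilon'}{2}\sum_ir_i}$.

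\textbf{Smaller point.} The hypothesis $p\nmid\det(C_G)$ is used only inside the Stevens'-trick estimate underlying Theorem~\ref{thm:fibre-implies-kl}. It plays no role in any integral form of Theorem~\ref{thm:plucker-coordinates}. So the ``main obstacle'' you single out is not where the difficulty lies.
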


\begin{example}\label{exam:SL3}
Let $G=\SL_3$, and let
$$\dot{w_0}=\begin{pmatrix}
0 & 0 & 1 \\
0 & -1 & 0 \\
1 & 0 & 0
\end{pmatrix}.$$
Let $B=AN$ be the Borel subgroup of upper triangular matrices, with diagonal torus $A$ and unipotent radical $N$. Thus
$$A=\{\diag(a,b,c)\mid a,b,c\in F^\times,\ abc=1\}.$$
Let $\Delta=\{\alpha_1,\alpha_2\}$ be the set of simple roots. The positive roots are
$$\Phi^+=\{\alpha_1,\alpha_2,\alpha_1+\alpha_2\}, \quad
\alpha_1(\diag(a,b,c))=ab^{-1},\qquad
\alpha_2(\diag(a,b,c))=bc^{-1},$$
so that the coroots are
$$\alpha_1^\vee(t)=\diag(t,t^{-1},1),\qquad
\alpha_2^\vee(t)=\diag(1,t,t^{-1}),$$
and the fundamental weights are
$$\omega_1(\diag(a,b,c))=a,\qquad \omega_2(\diag(a,b,c))=ab .$$

Let $V_1,V_2$ be the fundamental representations with highest weights
$\omega_1,\omega_2$, respectively. Here $V_1$ is the $3$-dimensional standard
representation and $V_2=V_1^*$ is its dual. Let $e_1,e_2,e_3$ be the standard
basis of $V_1$. The highest weight vector of $V_1^*$ is $\ell_1:=e_3^\vee$, hence
we obtain three Pl\"ucker coordinates
$$x_j(g):=\langle ge_j,e_3^\vee\rangle=g_{3j},\qquad 1\le j\le 3 .$$
For $V_2=V_1^*$ the highest weight vector of $V_2^*=V_1$ is $\ell_2:=e_1$, and
the matrix coefficients of $V_2$ are
$$y_j(g):=\langle e_1,g\cdot e_j^\vee\rangle=(g^{-1})_{j1},\qquad 1\le j\le 3 .$$
Hence we obtain $6$ Pl\"ucker coordinate polynomials:
$$p_{1j}(\mathbf x)=x_j(\dot w_0 n(\mathbf x)),\quad p_{2j}(\mathbf x)=y_j(\dot w_0 n(\mathbf x)).$$

We give a coordinate expression for $N$. The root subgroup parametrizations are
$$X_{\alpha_1}(t)=I+tE_{12},\qquad X_{\alpha_2}(t)=I+tE_{23},\qquad
X_{\alpha_1+\alpha_2}(t)=I+tE_{13},$$
so that, multiplying in increasing height,
$$\prod_{\gamma\in\Phi^+}X_\gamma(x_\gamma)=
\begin{pmatrix}
1 & x_{\alpha_1} & x_{\alpha_1}x_{\alpha_2}+x_{\alpha_1+\alpha_2}\\
0 & 1 & x_{\alpha_2}\\
0 & 0 & 1
\end{pmatrix}.$$
For the counting argument, it is convenient to make the following change of coordinates:
$$x=x_{\alpha_1},\qquad y=x_{\alpha_2},\qquad
z=x_{\alpha_1+\alpha_2}+x_{\alpha_1}x_{\alpha_2}.$$
The three polynomials $x,y,z$ are exactly
the polynomials $q_\gamma$. Each of them has the form $x_\gamma+f_\gamma$ with
$f_\gamma\in F[x_\beta:\beta<\gamma]$.

We have the Pl\"ucker coordinate polynomials
$$p_{11}=1,\quad p_{12}=x,\quad p_{13}=z,\qquad
p_{21}=xy-z,\quad p_{22}=-y,\quad p_{23}=1 .$$

Writing $r=(r_1,r_2)$, the Pl\"ucker-constrained set $S_r = S_{r_1,r_2}$ is
$$\begin{aligned}
S_{r_1,r_2}
&=\Bigl\{ (x_{\alpha_1},x_{\alpha_2},x_{\alpha_1+\alpha_2})\ \Bigm|\
x_{\alpha_1}=x_{\alpha_2}=0,\ q_{\alpha_1+\alpha_2}\in\CR,\ v(p_{ij})\ge -r_i\Bigr\}\\
&\stackrel{\sim}{=} \Bigl\{(x,y,z)\ \Bigm|\ x=y=0,\quad z\in\CR,\quad v(p_{ij})\ge -r_i\Bigr\}\\
&\stackrel{\sim}{=} \Bigl\{z\in\CR\ \Bigm|\ v(z)\ge -r_1,\quad v(z)\ge -r_2\Bigr\}.
\end{aligned}$$
Since the conditions are equivalent to $v(z)\ge-\min\{r_1,r_2\}$, and
since $\CR$ meets every coset of $\CO$ in exactly one point, we get
$$|S_{r_1,r_2}|=q^{\min\{r_1,r_2\}}\le q^{\frac12(r_1+r_2)} .$$
Since $\det(C_{\SL_3})=3$, Theorem~\ref{main-thm} therefore shows that when $p\neq 3$, the Kloosterman sums for
$\SL_3$ satisfy a nontrivial bound.
\end{example}

By the definition of the above Pl\"ucker-constrained sets, the difficulty for the counting is measured by the number of non-simple positive roots $\Phi^+ \setminus \Delta$.
\[
\begin{array}{c|cccccc}
 & \mathrm{SL}_2 & \mathrm{SL}_3 & \mathrm{Sp}_4 & \mathrm{SL}_4 &\SO_6 & G_2\\
\hline
\#(\Phi^+\setminus\Delta) & 0&1&2&3&3&4
\end{array}
\]

In this paper, we obtain a power saving for Kloosterman sums on $G_2$
(Theorem \ref{thm:G2-power}). The result has the following application in local harmonic analysis.

Let $\psi = \psi_0$ be the generic character on $N$  in \cite[Theorem 1.2]{CCL26}. Let $\pi$ be an irreducible smooth admissible representation of $G$ which
is generic in the sense that $\Hom_N(\pi,\psi) \not=0$. Let $\pi^\vee$ be the  contragredient representation of $\pi$. Then $\pi^\vee$ is also generic. Let $\ell_1  \in \Hom_N(\pi,\psi)$ and $\ell_2 \in \Hom_N(\pi^\vee,\psi^{-1})$ be nonzero Whittaker functionals on $\pi$ and
$\pi^\vee$. For any Schwartz function $f \in \CS(G)$, $\pi^\vee(f)\ell_2 \in (\pi^\vee)^\vee = \pi$. Consider the  Bessel distribution for $\pi$ 
\[B_\pi(f) = \ell_1 \left( \pi^\vee(f) \ell_2 \right), \quad f \in \CS(G).\]

Applying
\cite[Theorem 1.2]{CCL26} relating nontrivial bounds for Kloosterman sums to the regularity of Bessel distributions, we obtain

\begin{thm}[Theorem \ref{thm:G2-Bessel}]\label{thm:G2-Bessel-intro}
   Let $\pi$ be an irreducible smooth admissible generic representation of $G = G_2$ (with respect to $\psi$). Then the Bessel distribution for $\pi$ is regular: there is a unique smooth function $j_\pi$ defined on a dense subset $\Omega$ of $G$, locally integrable on $G$, such that
   \[B_\pi(f) = \int_G j_\pi(g)f(g)dg, \quad f \in \CS(G).\]
\end{thm}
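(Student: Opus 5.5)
The plan is to deduce this directly from \cite[Theorem 1.2]{CCL26}. That result says: if the Kloosterman sums attached to $\psi=\psi_0$ and to a suitable family of compact open subgroups have a nontrivial bound, then the Bessel distribution of every irreducible generic representation is regular. So the work splits into two parts: verify the hypothesis of \cite[Theorem 1.2]{CCL26} for $G=G_2$, and identify the subgroups required there with the principal congruence subgroups $K_d$ for large $d$.

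\textbf{Step 1: a nontrivial bound for $\Kl_d$ on $G_2$.} $G_2$ is split, semisimple and simply connected, and its Cartan matrix has determinant $1$. Hence the condition $p\nmid\det(C_G)$ in Theorem~\ref{main-thm} holds for every $p$, and the theorem reduces everything to a counting bound
\[
|S_r|\le C\,q^{(1-\varepsilon)(r_1+r_2)}, \qquad r=(r_1,r_2)\in\BZ_{\ge0}^2,
\]
for the Pl\"ucker-constrained sets of $G_2$. This is exactly the content of Theorem~\ref{thm:G2-power}, which I will invoke. If I had to prove it myself, I would proceed as follows.
\begin{enumerate}
\item Write $p_{ij}$ explicitly in the $7$-dimensional representation $V_{\omega_1}$ and the $14$-dimensional adjoint representation $V_{\omega_2}$.
\item Choose the $q_\gamma$ to triangularize the four non-simple root coordinates, as in Example~\ref{exam:SL3}.
\item Fiber over those coordinates, using the constraints $v(p_{ij})\ge -r_i$ and homogeneity (Theorem~\ref{thm:plucker-homogeneity}) to bound each fiber.
\end{enumerate}
I expect the main obstacle to be controlling the fibers where cancellation between monomials makes the valuation conditions weak.

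\textbf{Step 2: matching the hypotheses of \cite[Theorem 1.2]{CCL26}.} I would check that the $\{K_d\}$ for $d$ large form an admissible family in the sense of \cite{CCL26}, and that $\psi|_{N\cap K_d}=1$. This may require rescaling $\psi_0$ by a torus element, which only changes $\Kl$ by a shift in $t$ and does not affect the nontrivial-bound property. I would also check that the representative $\dot w_0$ is the one used there. Nontrivial bounds for the other Weyl elements are not needed, since \cite[Theorem 1.2]{CCL26} only involves the long Weyl element.

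\textbf{Step 3: conclusion.} Applying \cite[Theorem 1.2]{CCL26} gives a locally integrable function $j_\pi$, smooth on the dense open set $\Omega$ (the big cell, or its image under the relevant translate), that represents $B_\pi$. Uniqueness is automatic: two locally integrable functions defining the same distribution agree almost everywhere, and two smooth functions that agree almost everywhere on the dense open set $\Omega$ agree everywhere on $\Omega$.
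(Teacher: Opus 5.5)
\textbf{Gap in Step 2: the hypothesis of the cited theorem.} Your overall strategy matches the paper: feed Theorem~\ref{thm:G2-power} into \cite[Theorem 1.2]{CCL26}. Step 1 is fine. The problem is your assertion in Step 2 that ``nontrivial bounds for the other Weyl elements are not needed, since \cite[Theorem 1.2]{CCL26} only involves the long Weyl element.'' That is not the hypothesis of the cited theorem as the paper uses it. The criterion requires nontrivial bounds for the Kloosterman sums on \emph{every} standard Levi subgroup $M$ of $G$, not only on $G$ itself.

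Heuristically, the sums for a proper Levi (its own long Weyl element) control the Bessel function near the lower-dimensional relevant Bruhat cells. Bounding only $\Kl_d$ for $w_0$ on $G_2$ handles the open cell. It does not give local integrability of $j_\pi$ on all of $G$, which is part of what you must prove. So with Step 2 as written, the conclusion does not follow from the theorem you cite.

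\textbf{How to close it.} For $G_2$ the standard Levi subgroups are $A$, the two maximal Levis attached to $\alpha$ and to $\beta$ (each isomorphic to $\GL_2$), and $G_2$ itself.
\begin{itemize}
\item The torus case is trivial.
\item The two $\GL_2$ cases follow from the Weil bound for Kloosterman sums on $\GL_2$ (cf.\ Example~\ref{exam-GL_2} and \cite[Section 4.1]{CCL26}).
\item The $G_2$ case is Theorem~\ref{thm:G2-power}.
\end{itemize}
With this added, your argument coincides with the paper's proof. Your other Step 2 checks (that $\{K_d\}$ is admissible, that $\psi$ is trivial on $N\cap K_d$ for large $d$, and the choice of $\dot w_0$) are harmless. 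No rescaling of $\psi_0$ is needed, since the Stevens-trick estimate and Theorem~\ref{thm:G2-power} are stated for any fixed generic character.
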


\subsubsection{The proof}

We sketch the proof of Theorem \ref{main-thm}.
Fix $t=t_0\prod_{i=1}^m\alpha_i^\vee(\varpi^{r_i})\in A$ with $t_0\in A(\mathcal O)$ and $r=(r_i)_i\in\mathbb Z_{\ge0}^m$.
By Step 1 of Stevens' approach (Theorem \ref{thm:fibre-implies-kl}), the problem of bounding the Kloosterman sums is reduced to a counting
problem for the Kloosterman sets $X(\dot{w_0}t)_y$ with $y \in \BZ^m$
consisting of $x \in X(\dot{w_0}t)$ such that the valuations of 
$u'(x)_\alpha$, $\alpha\in \Delta$ are given by $y$. 

Using the Pl\"ucker coordinates, for each $y$,  $|X(\dot{w_0}t)_y|$
is bounded by the cardinality of the following quotient set (Lemma \ref{lem:X-fibre-bound}):
\[\CT_{y,r}/\sim, \quad r \in \BZ_{\geq 0}^m.\]
Here, with $\nu(x):=\min\{v(x),0\}$, for each $r$, 
$$\mathcal T_{y,r}=\left\{\mathbf x=(x_\gamma)_\gamma\in F^{\Phi^+}\ \Big|\ \nu(x_{\alpha_i})=y_i,\quad v(p_{ij}(\mathbf x))\ge -r_i\ \text{for all }i,j\right\}$$
and we equip this set with the equivalence relation
$$\mathbf x\sim \mathbf x'\iff n(\mathbf x)N_0 = n(\mathbf x')N_0, \quad N_0 = N \cap K_0.$$

The key point in the proof of Theorem \ref{main-thm} is that it suffices to count in the case $y=0$.
More precisely, by the homogeneity of the Pl\"ucker polynomials, there is a map (Lemma \ref{lem:Tyr-to-T0r})
$$\mathcal T_{y,r}/{\sim}  \lra  \mathcal T_{0,r}/{\sim}$$
given by
$$(x_\gamma)_\gamma\mapsto\Bigl(x_\gamma\prod_{1 \leq i \leq m}
\varpi^{-y_i\kappa_{\alpha_i}(\gamma)}\Bigr)_\gamma$$
where for each $\gamma \in \Phi^+$, we write 
$\gamma=\sum_{1 \leq i \leq m }\kappa_{\alpha_i}(\gamma)\alpha_i$.
Moreover, every fibre of the above map is sufficiently small: its cardinality is at most
$q^{-\sum_{i=1}^m \rho(\alpha_i)y_i}$ with $\rho(\alpha_i) = \sum_{\gamma \in \Phi^+} \kappa_{\alpha_i}(\gamma)$. This is the reason the above map is called a perturbation map.
This bound on the fibres reduces the counting of Kloosterman sets to that of the set
$\CT_{0,r}/{\sim}$. 

A system of representatives for the equivalence classes can be given as follows. For a fixed section $\CR$ and polynomials $(q_\gamma)_\gamma$ as in Definition \ref{defn:plucker-constrained}, there is
a bijection (Lemma \ref{lem:normal-form})
\[\left\{\mathbf x=(x_\gamma)_\gamma\in F^{\Phi^+}\ \Big|\ q_\gamma(\mathbf x)\in\mathcal R\text{ for all }\gamma\in\Phi^+\right\} \stackrel{\sim}{\lra} N/N_0.\]
This completes the proof of Theorem \ref{main-thm}.

We discuss the counting of  Pl\"ucker-constrained sets for
$G = G_2$. Let $\Delta=\{\alpha,\beta\}$ be the simple roots, with $\alpha$ the
short root and $\beta$ the long one. The positive roots are
$\Phi^+=\{\alpha,\beta,\alpha+\beta,2\alpha+\beta,3\alpha+\beta,3\alpha+2\beta\}$.
Denote by $\pi_1$ and $\pi_2$ the irreducible representations
of $G$ with highest weights $\omega_1 = 2\alpha+\beta$ and $\omega_2 = 3\alpha+2\beta$, respectively. Then $\pi_1$
is the $7$-dimensional standard representation. The representation 
$\pi_2$ is the $14$-dimensional adjoint representation of $G$, and
\[\wedge^2 \pi_1 = \pi_1 \oplus \pi_2.\]
In particular, we have $21$ Pl\"ucker coordinates for $G$.

For $\mathbf r=(r,s)\in\mathbb Z_{\ge 0}^2$, write $m:=\min\{r,s\}$.  The Pl\"ucker-constrained set $S_{r,s} = S_{\mathbf r}$ in
Theorem~\ref{main-thm} is (Section \ref{sec:G2prep})
$$S_{r,s}\stackrel{\sim}{=} \left\{(c,d,e,f)\in\CR^4\ \middle|\ 
\begin{aligned}
&v(c),\,v(d),\,v(e),\,v(f),\,v(d^2-ce)\ \ge\ -m,\\
&v(cd),\,v(c^3),\,v(c^2d-cf),\,v(c^2e-df)\ \ge\ -s,\\
&v(d^3-2cde+ef),\,v(c^2d^2-c^3e-cdf+f^2)\ \ge\ -s
\end{aligned}
\right\}.$$

For each $x \in F$, we call $\max\{0,-v(x)\}$ the depth of $x$. We count $S_{r,s}$ by fixing the depths $L_c,L_d,L_e$ of $c,d,e$ and counting in the order $c,d,e,f$:
\begin{itemize}
    \item For fixed positive depths $L_c,L_d,L_e \geq 1$, Proposition~\ref{prop:positive-depth}
gives a bound of $\ll q^{\frac{21}{22}(r+s)}$ for the number of points.
\item For fixed depths with $L_cL_dL_e = 0$, Proposition~\ref{prop:zero-depth} gives a bound of $\ll q^{\frac56(r+s)}$ for the number of points.
\end{itemize}
These propositions give the estimate (Corollary~\ref{cor:G2-count})
$$|S_{r,s}|\ll(1+r+s)^3q^{\frac{21}{22}(r+s)}.$$
By Theorem~\ref{main-thm}, this gives a nontrivial bound for
Kloosterman sums on $G$ (Theorem~\ref{thm:G2-power}). 

\s{\bf Acknowledgements.} This work was started during the Algebra and Number Theory Summer School held at Peking University from July 13 to August 21, 2026. The authors are grateful to Shouwu Zhang and Liang Xiao for organizing the summer school, and to Peking University for its hospitality and stimulating research environment. L. Cai is supported by the National Key R\&D Program of China No.
2023YFA1009702 and National Natural Science Foundation of
China, No. 12371012. 

\s{\bf Use of artificial intelligence.} ChatGPT and DeepSeek tools were used for exploratory computations, editing, searching for references and for verification of results.

\section{Pl\"ucker coordinates}

\subsection{Highest weight theory and Peter--Weyl theorem}

In this subsection we recall the standard representation-theoretic facts that will be used later.

Let $G$ be a split semisimple and simply connected algebraic group over $F$, and let $B=AN$ be a Borel subgroup. We write $V^*$ for the contragredient of a representation $V$ of $G$.

Recall that
$$X^*(A)=\operatorname{Hom}_{F\text{-grp}}(A,\mathbb G_m),\qquad
X_*(A)=\operatorname{Hom}_{F\text{-grp}}(\mathbb G_m,A),$$
with the natural pairing $\langle\cdot,\cdot\rangle:
X^*(A)\times X_*(A)\longrightarrow \mathbb Z$. We use additive notation for characters.

Let $\Phi\subset X^*(A)$ be the root system, let $\Phi^+$ be the set of positive roots, and let $\Delta=\{\alpha_1,\ldots,\alpha_m\}$ be the simple roots. For each $i$, let $\alpha_i^\vee\in X_*(A)$ be the corresponding simple coroot.

For two weights $\lambda,\mu\in X^*(A)$, we write $\mu\le\lambda$ if
$$\lambda-\mu=\sum_{i=1}^m n_i\alpha_i,\qquad n_i\in\mathbb Z_{\ge 0}.$$
We write $\mu<\lambda$ if $\mu\le\lambda$ and $\mu\ne\lambda$.

A weight $\lambda\in X^*(A)$ is called dominant if $\langle\lambda,\alpha_i^\vee\rangle\ge 0$ for every $i$. The monoid of dominant weights is
$$X^+(A)=\left\{\lambda\in X^*(A):\langle\lambda,\alpha_i^\vee\rangle\ge 0\text{ for all }i\right\}.$$
Since $G$ is semisimple and simply connected, there exist fundamental weights $\omega_1,\ldots,\omega_m\in X^*(A)$ characterized by $\langle\omega_i,\alpha_j^\vee\rangle=\delta_{ij}$. Every dominant weight admits a unique expression
$$\lambda=\sum_{i=1}^m n_i\omega_i,\qquad n_i\in\mathbb Z_{\ge 0}.$$

If $V$ is a finite-dimensional representation of $G$, then its restriction to $A$ has a weight-space decomposition
$$V=\bigoplus_{\chi\in X^*(A)}V[\chi],\qquad V[\chi]=\{v\in V:t\cdot v=\chi(t)v\text{ for every }t\in A\}.$$
A nonzero vector in $V[\chi]$ is called a weight vector of weight $\chi$.

\begin{thm}[{Highest weight theory; \cite[Chapter VII]{Jac79}}]\label{thm:highest-weight}
Let $V$ be a finite-dimensional irreducible representation of $G$. Then there exists a unique dominant weight $\lambda\in X^+(A)$ such that:
\begin{enumerate}
\item $\lambda$ is a weight of $V$, and every weight $\chi$ of $V$ satisfies $\chi\le\lambda$;

\item the highest weight space $V[\lambda]$ is one-dimensional and equals the space of $N$-invariant vectors
$$V[\lambda]=V^N=\{v\in V:n\cdot v=v\text{ for every }n\in N\};$$

\item $V$ is generated by any nonzero vector in $V[\lambda]$.
\end{enumerate}
Conversely, for every $\lambda\in X^+(A)$, there exists up to isomorphism a unique finite-dimensional irreducible representation $V_\lambda$ of highest weight $\lambda$. Thus the map $\lambda\mapsto [V_\lambda]$ is a bijection between $X^+(A)$ and the isomorphism classes of finite-dimensional irreducible representations of $G$.
\end{thm}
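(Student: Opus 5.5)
The plan is to follow the classical route for a split semisimple simply connected group, as in \cite[Chapter VII]{Jac79}: pass to the Lie algebra, use Verma-type arguments to control weights, and then integrate back to $G$. We use that $G$ is split with Borel $B=AN$. Let $\mathfrak g=\mathfrak n^-\oplus\mathfrak a\oplus\mathfrak n$ be the triangular decomposition. A finite-dimensional representation $V$ of $G$ differentiates to a representation of $\mathfrak g$, integrable in the sense that each root vector acts nilpotently. Because $G$ is connected and generated by $A$ and the root subgroups $X_\gamma(\mathbb G_a)$, $\gamma\in\pm\Phi^+$, $G$-stable subspaces coincide with subspaces stable under $A$ and all $\mathfrak g_\gamma$. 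Likewise $V^N$ coincides with $V^{\mathfrak n}$: the root subgroups act through $\exp$ of nilpotent operators in characteristic $0$.

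\textbf{Existence of a highest weight.} Since $V$ is finite-dimensional, the set of weights is finite. Choose $\lambda$ maximal for $\le$ among them. Then $\mathfrak g_\gamma V[\lambda]\subset V[\lambda+\gamma]=0$ for every $\gamma\in\Phi^+$, so $V[\lambda]\subset V^{\mathfrak n}=V^N$. Pick $0\neq v\in V[\lambda]$. By PBW, $U(\mathfrak g)v=U(\mathfrak n^-)v$, which is spanned by vectors of weight $\lambda-\sum n_i\alpha_i$ with $n_i\ge 0$. This subspace is $G$-stable, so by irreducibility it equals $V$. This gives (3) and the statement in (1) that every weight satisfies $\chi\le\lambda$. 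It also shows $V[\lambda]=Fv$ is one-dimensional.

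For (2) we still need $V^N\subset V[\lambda]$. The subspace $V^N$ is $A$-stable, hence a sum of weight vectors. Any weight vector $w\in V^N$ of weight $\mu$ generates $V=U(\mathfrak n^-)w$ by the same PBW argument, so every weight of $V$ is $\le\mu$. In particular $\lambda\le\mu$, and with $\mu\le\lambda$ this forces $\mu=\lambda$. Uniqueness of $\lambda$ follows, since any weight with these properties is the unique maximal weight.

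\textbf{Dominance.} For each $i$, restrict to the $\SL_2$ generated by $X_{\pm\alpha_i}$; this uses that the group is split and simply connected. The vector $v$ is a highest weight vector for this $\SL_2$ of weight $\langle\lambda,\alpha_i^\vee\rangle$. Finite-dimensional $\SL_2$-theory then gives $\langle\lambda,\alpha_i^\vee\rangle\ge0$.

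\textbf{Converse.} Uniqueness is the easy half. If $V,V'$ both have highest weight $\lambda$, take highest weight vectors $v,v'$ and let $W$ be the subrepresentation of $V\oplus V'$ generated by $(v,v')$. Then $W$ is generated by a highest weight vector, and $W[\lambda]$ is one-dimensional. Hence any proper subrepresentation of $W$ has no $\lambda$-component and lies in the sum of the lower weight spaces, and the sum of all proper subrepresentations is again proper. So $W$ has a unique maximal proper subrepresentation and a unique irreducible quotient. Each projection $W\to V$ and $W\to V'$ is surjective onto an irreducible module, so both $V$ and $V'$ are that quotient, and $V\cong V'$.

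Existence is the main obstacle. Two routes are available:
\begin{itemize}
\item Algebraically, take the Verma module $M(\lambda)=U(\mathfrak g)\otimes_{U(\mathfrak b)}F_\lambda$ and its unique irreducible quotient $L(\lambda)$. Dominance implies that $f_i^{\langle\lambda,\alpha_i^\vee\rangle+1}v$ is a singular vector, so it dies in $L(\lambda)$. One then shows $L(\lambda)$ is finite-dimensional, either because its weights are Weyl-invariant or via the locally nilpotent action of the $e_i,f_i$. Finally one integrates to $G$, which is possible because $G$ is simply connected and $\lambda\in X^*(A)$.
\item Geometrically, which is cleaner for a split group, set $V_\lambda^*=H^0(G/B,\mathcal L_\lambda)=\{f\in F[G]: f(gb)=\lambda(b)^{-1}f(g)\}$, with the sign convention chosen so that $\lambda$ appears. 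This is nonzero precisely when $\lambda$ is dominant (Borel--Weil). Its $N$-invariants are one-dimensional because $Nw_0B$ is open dense in $G$. So its socle is irreducible with highest weight $\lambda$, and dualizing gives $V_\lambda$.
\end{itemize}

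I would cite Borel--Weil for the nonvanishing, since verifying it directly is the hardest step.
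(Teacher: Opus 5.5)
Your proposal is correct. It is the classical argument that the paper delegates to Chapter VII of Jacobson; the paper gives no proof of its own. You use weights and PBW for (1)--(3), $\mathfrak{sl}_2$-theory for dominance, and the unique-irreducible-quotient argument for uniqueness. For existence you use Verma modules plus integration, or Borel--Weil. What you add beyond that Lie-algebra reference is the characteristic-$0$ dictionary identifying $G$-stable subspaces and $V^N$ with their $\mathfrak g$-counterparts, which the citation leaves implicit.

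There is one bookkeeping slip in your geometric alternative. Take $B=AN$ the upper Borel and let $G$ act by left translation. Then the space $\{f\in F[G]: f(gb)=\lambda(b)^{-1}f(g)\}$ is zero for dominant $\lambda\neq 0$: an $N$-invariant there is determined by its value at $\dot w_0$ and has weight $w_0\lambda$, which is antidominant. You must instead take $\{f: f(gb)=\lambda(b)f(g)\}$, whose $N$-invariant line has weight $-w_0\lambda$. That space is $V_\lambda^*$, and its socle has highest weight $-w_0\lambda$, not $\lambda$ as you wrote. Only after dualizing do you get $V_\lambda$.
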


Let $W=N_G(A)/A$ be the Weyl group, and let $w_0\in W$ be its longest element. For $\lambda\in X^+(A)$, the contragredient representation $V_\lambda^*$ is irreducible of highest weight $\lambda^*=-w_0\lambda$.

\begin{thm}[{Cartan component; \cite[Chapter VII]{Jac79}}]\label{thm:cartan-component}
For $\lambda,\mu\in X^+(A)$, the tensor product $V_\lambda\otimes_F V_\mu$ decomposes as
$$V_\lambda\otimes_F V_\mu\simeq V_{\lambda+\mu}\oplus\bigoplus_{\substack{\nu\in X^+(A)\\ \nu<\lambda+\mu}}V_\nu^{\oplus m_\nu},\qquad m_\nu\in\mathbb Z_{\ge 0}.$$
In particular, $V_{\lambda+\mu}$ occurs in $V_\lambda\otimes_F V_\mu$ with multiplicity one, and every nonzero $G$-equivariant map
$$V_\lambda\otimes_F V_\mu\longrightarrow V_{\lambda+\mu}$$
is surjective. 
\end{thm}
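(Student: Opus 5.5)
The approach is to combine complete reducibility with a weight computation in the tensor product, using Theorem \ref{thm:highest-weight}.

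First I would invoke complete reducibility. Since $F$ has characteristic $0$ and $G$ is split semisimple, every finite-dimensional representation of $G$ is semisimple; this is Weyl's theorem, applied through the Lie algebra, as in \cite[Chapter VII]{Jac79}. So we can write
\[
V_\lambda\otimes V_\mu\simeq\bigoplus_\nu V_\nu^{\oplus m_\nu},
\]
with each $\nu\in X^+(A)$ by Theorem \ref{thm:highest-weight}. This is the only nontrivial external input, and I expect it to be the main obstacle if one wants a self-contained argument. Granting it, the rest is weight bookkeeping.

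Next I would compute the weights of the tensor product. The weights of $V_\lambda\otimes V_\mu$ are the sums $\chi+\chi'$ with $\chi$ a weight of $V_\lambda$ and $\chi'$ a weight of $V_\mu$. By Theorem \ref{thm:highest-weight}(1), each such sum satisfies $\chi+\chi'\le\lambda+\mu$. Equality forces $\chi=\lambda$ and $\chi'=\mu$, because $\lambda-\chi$ and $\mu-\chi'$ are nonnegative integer combinations of simple roots. Hence by Theorem \ref{thm:highest-weight}(2), the $\lambda+\mu$ weight space is one-dimensional:
\[
(V_\lambda\otimes V_\mu)[\lambda+\mu]=V_\lambda[\lambda]\otimes V_\mu[\mu].
\]

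Now I would read off the summands. The highest weight $\nu$ of any summand is a weight of the tensor product, so $\nu\le\lambda+\mu$. Since the decomposition is $A$-stable, the weight spaces decompose along the summands. The $\lambda+\mu$ weight space is nonzero and one-dimensional, so exactly one summand, counted with multiplicity, has $\lambda+\mu$ as a weight. For that summand $\lambda+\mu\le\nu\le\lambda+\mu$, so $\nu=\lambda+\mu$. Any other summand $V_\nu$ with $\nu=\lambda+\mu$ would contribute a further dimension to this weight space, which is impossible. Therefore $m_{\lambda+\mu}=1$, and all remaining summands have $\nu<\lambda+\mu$.

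Finally, let $\phi:V_\lambda\otimes V_\mu\to V_{\lambda+\mu}$ be a nonzero $G$-equivariant map. Its image is a nonzero $G$-subrepresentation of the irreducible $V_{\lambda+\mu}$, so it is all of $V_{\lambda+\mu}$. In particular $\phi$ is surjective; by Schur's lemma it vanishes on the other isotypic components.
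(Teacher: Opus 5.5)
Your proof is correct. It rests on complete reducibility in characteristic $0$, on the fact that $(V_\lambda\otimes_F V_\mu)[\lambda+\mu]=V_\lambda[\lambda]\otimes_F V_\mu[\mu]$ is one-dimensional, and on irreducibility of $V_{\lambda+\mu}$ for the surjectivity claim. The paper gives no proof of its own and only cites Jacobson's Chapter VII, and your argument is the standard highest-weight argument behind that reference.
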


\begin{thm}[{Algebraic Peter--Weyl; \cite{TY05}, \cite[Corollary 3.1.12]{Hay18}}]\label{thm:algebraic-pw}
Let $F[G]$ be the coordinate ring of $G$, equipped with the $G\times G$-action
$$((g_1,g_2)\cdot f)(x)=f(g_1^{-1}xg_2),\qquad g_1,g_2,x\in G.$$
For $\lambda\in X^+(A)$, let $\pi_\lambda:G\to \operatorname{GL}(V_\lambda)$
be the irreducible representation of highest weight $\lambda$. For $\ell\in V_\lambda^*$ and $v\in V_\lambda$, consider the matrix coefficient
$$c_{\ell,v}(g)=\langle \ell,\pi_\lambda(g)v\rangle.$$
Then the matrix coefficient maps induce an isomorphism of rational $G\times G$-modules
$$\bigoplus_{\lambda\in X^+(A)}V_\lambda^*\otimes_F V_\lambda\xrightarrow{\ \sim\ }F[G],\qquad\ell\otimes v\longmapsto c_{\ell,v}.$$
Under this isomorphism, the left copy of $G$ acts on $V_\lambda^*$ and the right copy of $G$ acts on $V_\lambda$.
\end{thm}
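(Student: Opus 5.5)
The plan is to check that the map is well defined and $G\times G$-equivariant, then prove injectivity summand by summand using irreducibility, and finally prove surjectivity by writing every regular function as a matrix coefficient. Throughout I use that $F$ has characteristic $0$, so rational representations of the split semisimple group $G$ are completely reducible (Weyl's theorem).

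\emph{Equivariance.} For $g_1,g_2,x\in G$ we have
\[
((g_1,g_2)\cdot c_{\ell,v})(x)=\langle \ell,\pi_\lambda(g_1^{-1}xg_2)v\rangle=\langle \pi_\lambda^*(g_1)\ell,\ \pi_\lambda(x)\pi_\lambda(g_2)v\rangle .
\]
Hence $(g_1,g_2)\cdot c_{\ell,v}=c_{\pi_\lambda^*(g_1)\ell,\,\pi_\lambda(g_2)v}$. The map $\ell\otimes v\mapsto c_{\ell,v}$ is bilinear in $(\ell,v)$, so it descends to $V_\lambda^*\otimes_F V_\lambda$. It is therefore a morphism of rational $G\times G$-modules, with the left copy of $G$ acting on $V_\lambda^*$ and the right copy on $V_\lambda$.

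\emph{Injectivity.} Since $G$ is split, Theorem~\ref{thm:highest-weight} gives $\operatorname{End}_G(V_\lambda)=F$. It follows that the external tensor product $V_\lambda^*\boxtimes V_\lambda$ is an irreducible $G\times G$-module. The map is nonzero on this summand: pick $v\ne0$ and $\ell$ with $\langle\ell,v\rangle\ne0$, so that $c_{\ell,v}(1)\ne0$. So the map is injective on each summand. For $\lambda\ne\mu$ the modules $V_\lambda^*\boxtimes V_\lambda$ and $V_\mu^*\boxtimes V_\mu$ are non-isomorphic, since already their restrictions to the right copy of $G$ are isotypic of different types. The images of pairwise non-isomorphic irreducible submodules form a direct sum, so the whole map is injective.

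\emph{Surjectivity.} Let $f\in F[G]$. By local finiteness of the regular representation, $f$ lies in a finite-dimensional subspace $W\subset F[G]$ stable under right translation $\rho(g)h(x)=h(xg)$. By complete reducibility, write $W=\bigoplus_k W_k$ with $W_k\simeq V_{\lambda_k}$ irreducible, and decompose $f=\sum_k f_k$ accordingly. It suffices to treat a single $f_k\in W_k$.

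Let $e\colon W_k\to F$ be evaluation at $1$, so $e\in W_k^*$. Then
\[
f_k(x)=(\rho(x)f_k)(1)=\langle e,\rho(x)f_k\rangle .
\]
Transporting along an isomorphism $W_k\simeq V_{\lambda_k}$ exhibits $f_k$ as a matrix coefficient $c_{\ell,v}$ with $\ell\in V_{\lambda_k}^*$ and $v\in V_{\lambda_k}$. This proves surjectivity.

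\emph{Main obstacle.} The nontrivial inputs are the local finiteness of $F[G]$ under translations and complete reducibility in characteristic $0$. I would cite both rather than reprove them, as in \cite{Hay18}. The rest is formal.
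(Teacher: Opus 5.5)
Your proof is correct. The paper gives no argument of its own for this statement and only cites the literature; your route is the standard proof of this result: equivariance by direct computation, injectivity from Schur's lemma and the irreducibility of $V_\lambda^*\boxtimes V_\lambda$ as a $G\times G$-module, and surjectivity from local finiteness, complete reducibility in characteristic $0$, and the identity $f(x)=\langle \mathrm{ev}_1,\rho(x)f\rangle$. The only detail you leave implicit is that identities verified on $F$-points determine elements of $F[G]$, which holds because $G(F)$ is Zariski dense in the split group $G$.
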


\subsection{Pl\"ucker coordinates}

Let
$$F[N\backslash G]:=F[G]^N=\{f\in F[G]:f(ng)=f(g)\text{ for all }n\in N,\ g\in G\}.$$
Taking $N$-invariants on the left in the algebraic Peter--Weyl decomposition
(Theorem~\ref{thm:algebraic-pw}) gives an isomorphism of right $G$-modules
$$F[N\backslash G]\simeq\bigoplus_{\lambda\in X^+(A)}(V_\lambda^*)^N\otimes_F V_\lambda.$$
By highest weight theory (Theorem~\ref{thm:highest-weight}), each space
$(V_\lambda^*)^N$ is one-dimensional.

For every $\lambda\in X^+(A)$, choose a nonzero vector $\ell_\lambda\in (V_\lambda^*)^N$.
For $v\in V_\lambda$, set the matrix coefficient
$$c_{\lambda,v}(g):=\langle \ell_\lambda,\pi_\lambda(g)v\rangle,\qquad g\in G.$$
Since $\ell_\lambda$ is fixed by $N$, the function $c_{\lambda,v}$ is left
$N$-invariant, hence belongs to $F[N\backslash G]$.

Define a right $G$-module homomorphism
$$\Theta_\lambda:V_\lambda\longrightarrow F[N\backslash G],\qquad\Theta_\lambda(v):=c_{\lambda,v},$$
and set $U_\lambda:=\Theta_\lambda(V_\lambda)$.
Consequently, the algebraic Peter--Weyl decomposition induces an isomorphism
of right $G$-modules
$$\bigoplus_{\lambda\in X^+(A)}V_\lambda\xrightarrow{\ \sim\ }F[N\backslash G],\qquad v\longmapsto c_{\lambda,v}.$$

\begin{thm}\label{thm:plucker-coordinates}The $F$-algebra $F[N\backslash G]$ is generated by
$U_{\omega_1},\ldots,U_{\omega_m}$. Equivalently, for each
$i=1,\ldots,m$, choose a basis $\{e_{ij}\}_{j}$ of $V_{\omega_i}$ and a
nonzero vector $\ell_i\in (V_{\omega_i}^*)^N$, and set
$$c_{ij}(g):=\langle \ell_i,\pi_{\omega_i}(g)e_{ij}\rangle.$$
Then
$$F[N\backslash G]=F\bigl[c_{ij}:1\le i\le m,\ 1\le j\le \dim V_{\omega_i}\bigr].$$
\end{thm}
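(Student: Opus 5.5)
The plan is to use the grading of $F[N\backslash G]$ by dominant weights coming from the isomorphism $\bigoplus_\lambda V_\lambda\xrightarrow{\sim}F[N\backslash G]$, $v\mapsto c_{\lambda,v}$. I will show that this grading is multiplicative: $U_\lambda\cdot U_\mu=U_{\lambda+\mu}$. Granting that, every $\lambda\in X^+(A)$ can be written as $\lambda=\sum_i n_i\omega_i$ with $n_i\ge 0$. Induction on $\sum n_i$ then gives that $U_\lambda$ is spanned by products of elements of the $U_{\omega_i}$. Since $F[N\backslash G]=\bigoplus_\lambda U_\lambda$, the $c_{ij}$ generate, because $\{c_{ij}\}_j$ spans $U_{\omega_i}=\Theta_{\omega_i}(V_{\omega_i})$. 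The case $\lambda=0$ is handled by the constants $U_0=F$.

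\textbf{Step 1 (products land in the right component).} For $v\in V_\lambda$ and $w\in V_\mu$ we have
$c_{\lambda,v}(g)c_{\mu,w}(g)=\langle \ell_\lambda\otimes\ell_\mu,(\pi_\lambda\otimes\pi_\mu)(g)(v\otimes w)\rangle$.
The functional $\ell_\lambda\otimes\ell_\mu\in (V_\lambda\otimes V_\mu)^*$ is $N$-invariant and has $A$-weight $-(\lambda+\mu)$ in the appropriate sign convention, since $\ell_\lambda$ is a lowest weight vector of $V_\lambda^*$ of weight $-\lambda$.

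Decompose $V_\lambda\otimes V_\mu=V_{\lambda+\mu}\oplus\bigoplus_{\nu<\lambda+\mu}V_\nu^{m_\nu}$ using Theorem~\ref{thm:cartan-component}. The $N$-invariant functionals restricted to a summand $V_\nu$ are multiples of $\ell_\nu$, which has weight $-\nu$. Comparing weights, $\ell_\lambda\otimes\ell_\mu$ vanishes on every $V_\nu$ with $\nu\neq\lambda+\mu$. It therefore factors through the projection $\mathrm{pr}:V_\lambda\otimes V_\mu\to V_{\lambda+\mu}$ as a nonzero multiple of $\ell_{\lambda+\mu}$. Hence
\[c_{\lambda,v}c_{\mu,w}=c\cdot c_{\lambda+\mu,\mathrm{pr}(v\otimes w)}\in U_{\lambda+\mu}.\]

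\textbf{Step 2 (surjectivity).} The map $V_\lambda\otimes V_\mu\to V_{\lambda+\mu}$ given by $\mathrm{pr}$ is nonzero and $G$-equivariant. By Theorem~\ref{thm:cartan-component} it is surjective. Moreover $\Theta_{\lambda+\mu}$ is injective, since Peter--Weyl gives an isomorphism. So the products $c_{\lambda,v}c_{\mu,w}$ span $U_{\lambda+\mu}$, i.e.\ $U_\lambda U_\mu=U_{\lambda+\mu}$. Nonvanishing of the product functional needs a check: $\ell_\lambda\otimes\ell_\mu$ is nonzero on $v_\lambda\otimes v_\mu$, the highest weight vector that generates the Cartan component. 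This is immediate because $\langle\ell_\lambda,v_\lambda\rangle\neq 0$, as $\ell_\lambda$ pairs nontrivially with the highest weight line.

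The main obstacle is the weight bookkeeping in Step 1. One must verify that an $N$-invariant functional on $V_\nu$ has weight $-\nu$ and cannot pair nontrivially with $V_{\nu'}$ for $\nu'\neq\nu$. This is where highest weight theory (Theorem~\ref{thm:highest-weight}, applied to $V_\nu^*$ whose lowest weight is $-\nu$, i.e.\ the $N$-fixed line pairs only with the $\nu$-weight space) is used. The remainder is formal induction.
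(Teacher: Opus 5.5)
Your skeleton is the paper's: show $U_\lambda U_\mu=U_{\lambda+\mu}$ via the Cartan component, then induct on $\sum n_i$. However, the weight bookkeeping you single out as the crux is wrong, and one step fails as written.

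By Theorem~\ref{thm:highest-weight} applied to $V_\lambda^*$, the $N$-fixed line $(V_\lambda^*)^N$ is the \emph{highest} weight line of $V_\lambda^*$, of weight $\lambda^*=-w_0\lambda$. It is not a lowest weight line of weight $-\lambda$; that line is fixed by $\overline N$, not $N$. Equivalently, $\ell_\lambda$ pairs nontrivially only with the lowest weight line $V_\lambda[w_0\lambda]$ of $V_\lambda$. In particular $\langle\ell_\lambda,v_\lambda\rangle=0$ for every $\lambda\neq0$. Compare the paper's $\SL_3$ example: $\ell_1=e_3^\vee$ vanishes on the highest weight vector $e_1$. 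So your nonvanishing check is false: $\ell_\lambda\otimes\ell_\mu$ is zero on $v_\lambda\otimes v_\mu$. That check is your only argument for $c\neq0$, so Step 2 as written does not rule out $U_\lambda U_\mu=0$.

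The damage is repairable, and the two routes then compare as follows.

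\begin{itemize}
\item \textbf{Step 1.} Replace the weight $-\nu$ of $\ell_\nu$ by $\nu^*=-w_0\nu$. The restriction of $\ell_\lambda\otimes\ell_\mu$ (weight $\lambda^*+\mu^*$) to a summand $V_\nu$ is an $N$-invariant functional of that weight. It therefore vanishes unless $\nu^*=(\lambda+\mu)^*$, i.e.\ $\nu=\lambda+\mu$, since $\nu\mapsto\nu^*$ is additive and injective. So your factorization through $\mathrm{pr}$ survives.
\item \textbf{Nonvanishing, $c\neq0$.} Any of these works:
\begin{itemize}
\item evaluate on $v_\lambda^-\otimes v_\mu^-$ (lowest weight vectors), which spans the one-dimensional $w_0(\lambda+\mu)$-weight space of $V_\lambda\otimes V_\mu$ and hence lies in the Cartan component;
\item note that $\ell_\lambda\otimes\ell_\mu\neq0$ while, by Step 1, it vanishes on every other summand;
\item argue as the paper does: $c_{\lambda,v}c_{\mu,w}\neq0$ for nonzero $v,w$ because $F[G]$ is an integral domain.
\end{itemize}
\item \textbf{Comparison.} The paper gets $U_\lambda U_\mu\subset U_{\lambda+\mu}$ without decomposing $V_\lambda\otimes V_\mu$. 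It simply observes that $U_\lambda$ is the $\lambda^*$-eigenspace of the \emph{left} $A$-action on $F[N\backslash G]$. Your corrected route is more explicit: it identifies multiplication $U_\lambda\otimes U_\mu\to U_{\lambda+\mu}$ with the Cartan projection $v\otimes w\mapsto\mathrm{pr}(v\otimes w)$.
\item \textbf{Minor point.} Surjectivity of $\mathrm{pr}$ needs no theorem, since it is a projection onto a direct summand. The Cartan component theorem is really used for multiplicity one and for $\nu<\lambda+\mu$ on the other summands.
\end{itemize}
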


\begin{proof}

Consider the left action of $A$ on $F[N\backslash G]$ defined
by $(L_t f)(g):=f(t^{-1}g)$.
If $f=c_{\lambda,v}\in U_\lambda$, then
$$(L_t f)(g)=\langle \ell_\lambda,\pi_\lambda(t^{-1}g)v\rangle=\langle \pi_\lambda^*(t)\ell_\lambda,\pi_\lambda(g)v\rangle=\lambda^*(t)c_{\lambda,v}(g),$$
where $\lambda^*=-w_0\lambda$. Thus $U_\lambda$ is the $\lambda^*$-weight
space for the left $A$-action. Since
$(\lambda+\mu)^*=\lambda^*+\mu^*$, pointwise multiplication gives
$$U_\lambda U_\mu\subset U_{\lambda+\mu}.$$

Multiplication therefore defines a homomorphism of right $G$-modules
$$m_{\lambda,\mu}:U_\lambda\otimes_F U_\mu\longrightarrow U_{\lambda+\mu},\qquad f\otimes f'\longmapsto ff'.$$
Since $G$ is connected, $F[G]$ is an integral domain, and so is its
subalgebra $F[N\backslash G]$. Hence $m_{\lambda,\mu}\neq 0$. Using the right
$G$-module identifications $U_\lambda\simeq V_\lambda$ and
$U_\mu\simeq V_\mu$, the map $m_{\lambda,\mu}$ becomes a nonzero
$G$-equivariant map $V_\lambda\otimes_F V_\mu\longrightarrow V_{\lambda+\mu}$.
By the Cartan component theorem (Theorem~\ref{thm:cartan-component}), every
nonzero such map is surjective. Therefore
$$U_\lambda U_\mu=U_{\lambda+\mu}.$$

Now let $\lambda\in X^+(A)$ and write $\lambda=\sum_{i=1}^m n_i\omega_i,n_i\ge 0$.
Repeated application of $U_\lambda U_\mu=U_{\lambda+\mu}$ gives
$$U_\lambda\subset F[U_{\omega_1},\ldots,U_{\omega_m}].$$
Together with the direct sum decomposition
$$F[N\backslash G]=\bigoplus_{\lambda\in X^+(A)}U_\lambda,$$
this proves that $F[N\backslash G]$ is generated by
$U_{\omega_1},\ldots,U_{\omega_m}$, and hence by the functions $c_{ij}$.
\end{proof}
\begin{remark}
The case $G=\SL_n$ is classical: the $c_{ij}$ are then the Pl\"ucker
coordinates of the flag determined by $g$, and the assertion of
Theorem~\ref{thm:plucker-coordinates} is the corresponding generation statement
for the coordinate ring of the basic affine space; see \cite{Dol03}, \cite{Ses16} for the
classical theory.
\end{remark}

\subsection{Homogeneity of Pl\"ucker coordinates}

Let $G$ be a split semisimple and simply connected algebraic group over $F$, and let $B=AN$ be a Borel subgroup. Fix an ordering of the positive roots $\Phi^+$, and choose root subgroup parametrizations
$$X_\gamma:\mathbb G_a\longrightarrow N_\gamma,\qquad \gamma\in\Phi^+.$$
Using the fixed ordering, every $n\in N$ can be written uniquely as
$$n(\mathbf x)=\prod_{\gamma\in\Phi^+}X_\gamma(x_\gamma),\qquad \mathbf x=(x_\gamma)_{\gamma\in\Phi^+}.$$

Let $Q^+:=\sum_{\alpha\in \Delta}\mathbb Z_{\ge 0}\alpha$ be the positive root monoid.
We say that a polynomial $P\in F[\mathbf x]$ is \emph{$\mu$-homogeneous} for some $\mu\in Q^+$ if, writing
$$P=\sum_{\mathbf k}b_{\mathbf k}\mathbf x^{\mathbf k},\qquad \mathbf k=(k_\gamma)_{\gamma\in\Phi^+},$$
every monomial with $b_{\mathbf k}\neq 0$ satisfies
$$\sum_{\gamma\in\Phi^+}k_\gamma\gamma=\mu.$$

\begin{lem}\label{lem:homogeneous-scaling}
Let $\mu\in Q^+$ and let $P\in F[\mathbf x]$. Then $P$ is $\mu$-homogeneous if and only if
$$P\bigl((\gamma(t)x_\gamma)_\gamma\bigr)=\mu(t)P(\mathbf x)\qquad\text{for all }t\in A.$$
\end{lem}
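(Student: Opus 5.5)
Both directions reduce to a computation on monomials. For a monomial $\mathbf x^{\mathbf k}=\prod_\gamma x_\gamma^{k_\gamma}$ and $t\in A$, substituting $x_\gamma\mapsto\gamma(t)x_\gamma$ gives
$$\prod_\gamma(\gamma(t)x_\gamma)^{k_\gamma}=\Bigl(\sum_\gamma k_\gamma\gamma\Bigr)(t)\,\mathbf x^{\mathbf k},$$
using additive notation for characters, so that $\prod_\gamma\gamma(t)^{k_\gamma}=(\sum_\gamma k_\gamma\gamma)(t)$. Write $\operatorname{wt}(\mathbf k):=\sum_\gamma k_\gamma\gamma\in Q^+$. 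The substitution therefore acts diagonally on the monomial basis of $F[\mathbf x]$, and $A$ acts on $\mathbf x^{\mathbf k}$ through the character $\operatorname{wt}(\mathbf k)$.

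For the direction ($\Rightarrow$), suppose every monomial of $P$ has weight $\mu$. Applying the computation termwise gives
$$P\bigl((\gamma(t)x_\gamma)_\gamma\bigr)=\sum_{\mathbf k}b_{\mathbf k}\,\mu(t)\,\mathbf x^{\mathbf k}=\mu(t)P(\mathbf x).$$

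For the direction ($\Leftarrow$), group the monomials of $P$ by weight, writing $P=\sum_{\nu}P_\nu$ with $P_\nu$ $\nu$-homogeneous. The hypothesis together with ($\Rightarrow$) applied to each $P_\nu$ gives
$$\sum_\nu\bigl(\nu(t)-\mu(t)\bigr)P_\nu=0$$
as polynomials in $\mathbf x$, for every $t\in A$. The monomial supports of distinct $P_\nu$ are disjoint, so for each $\nu$ with $P_\nu\ne0$ we get $\nu(t)=\mu(t)$ for all $t\in A$. Hence $\nu=\mu$ as elements of $X^*(A)$.

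It remains to check that $\nu=\mu$ in $X^*(A)$ forces $\nu=\mu$ in $Q^+$, since homogeneity is defined via the coefficient expression in $Q^+$. This holds because the simple roots are linearly independent in $X^*(A)\otimes\BQ$, so $Q^+\to X^*(A)$ is injective. This injectivity is the only point needing care, together with the fact that $F$ is infinite: $F$ is a $p$-adic field, so $A(F)\cong(F^\times)^m$ is Zariski dense in $A$, and characters agreeing on $A(F)$ are equal. I expect no real obstacle.
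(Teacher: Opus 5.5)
Your proof is correct and is essentially the paper's argument. Both expand $P$ in monomials, observe that the substitution scales $\mathbf x^{\mathbf k}$ by $(\sum_\gamma k_\gamma\gamma)(t)$, and conclude from linear independence of monomials together with the fact that a character trivial on $A$ is trivial. Grouping the monomials by weight, and your remarks on the injectivity of $Q^+\to X^*(A)$ and the Zariski density of $A(F)$, only make explicit steps the paper leaves implicit.
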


\begin{proof}
Write
$$P(\mathbf x)=\sum_{\mathbf k}b_{\mathbf k}\mathbf x^{\mathbf k},\qquad\mathbf k=(k_\gamma)_{\gamma\in\Phi^+}.$$
Then
$$P\bigl((\gamma(t)x_\gamma)_\gamma\bigr)=\sum_{\mathbf k}b_{\mathbf k}\prod_{\gamma\in\Phi^+}\gamma(t)^{k_\gamma}x_\gamma^{k_\gamma}=\sum_{\mathbf k}b_{\mathbf k}\Bigl(\sum_{\gamma\in\Phi^+}k_\gamma\gamma\Bigr)(t)\,\mathbf x^{\mathbf k}.$$
Thus the identity
$$P\bigl((\gamma(t)x_\gamma)_\gamma\bigr)=\mu(t)P(\mathbf x)\qquad\text{for all }t\in A$$
is equivalent to
$$\sum_{\mathbf k}b_{\mathbf k}\Bigl(\Bigl(\sum_{\gamma\in\Phi^+}k_\gamma\gamma\Bigr)(t)-\mu(t)\Bigr)\,\mathbf x^{\mathbf k}=0\qquad\text{for all }t\in A.$$
Since the monomials $\mathbf x^{\mathbf k}$ are linearly independent, this holds if and only if, for every $\mathbf k$ with $b_{\mathbf k}\neq 0$, the character $(\sum_{\gamma\in\Phi^+}k_\gamma\gamma)-\mu$ is identically one on $A$. Equivalently, $\sum_{\gamma\in\Phi^+}k_\gamma\gamma=\mu$ whenever $b_{\mathbf k}\neq 0$. This is exactly the definition of $\mu$-homogeneity.
\end{proof}

\begin{thm}[Homogeneity of Pl\"ucker coordinates]\label{thm:plucker-homogeneity}
Let $\lambda\in X^+(A)$, let $V_\lambda$ be the irreducible representation of highest weight $\lambda$, choose $0\neq\ell_\lambda\in (V_\lambda^*)^N$, and let $v\in V_\lambda[\chi]$ be a weight vector of weight $\chi$. For $t\in A$, define
$$P_{\lambda,v,t}(\mathbf x):=\bigl\langle\ell_\lambda,\pi_\lambda(\dot w_0 t\,n(\mathbf x))v\bigr\rangle.$$
Then $P_{\lambda,v,t}$ is a polynomial in the variables $(x_\gamma)_{\gamma\in\Phi^+}$, and it is $(\lambda-\chi)$-homogeneous.
\end{thm}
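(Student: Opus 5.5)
Two facts need proof: that $P_{\lambda,v,t}$ is a polynomial, and that it is $(\lambda-\chi)$-homogeneous. The plan is to get polynomiality from the algebraic structure of $N$, and homogeneity from Lemma~\ref{lem:homogeneous-scaling} by conjugating $n(\mathbf x)$ by an auxiliary torus element $s\in A$.

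\textbf{Polynomiality.} Each root subgroup map $X_\gamma:\mathbb G_a\to N_\gamma$ is a morphism of algebraic groups. Hence $\pi_\lambda\circ X_\gamma$ is a unipotent representation of $\mathbb G_a$, so its matrix entries are polynomials in $x_\gamma$. The element $n(\mathbf x)$ is the ordered product $\prod_\gamma X_\gamma(x_\gamma)$, so $\pi_\lambda(n(\mathbf x))$ has matrix entries in $F[\mathbf x]$. Since $\dot w_0 t$ is a fixed element, pairing with $\ell_\lambda$ after applying $\pi_\lambda(\dot w_0 t)$ is a fixed linear functional. Therefore $P_{\lambda,v,t}\in F[\mathbf x]$.

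\textbf{Homogeneity.} Fix $s\in A$. We have $s\,X_\gamma(x)\,s^{-1}=X_\gamma(\gamma(s)x)$, and conjugating the ordered product factor by factor gives
\[
n\bigl((\gamma(s)x_\gamma)_\gamma\bigr)=s\,n(\mathbf x)\,s^{-1}.
\]
Substituting this into the definition,
\[
P_{\lambda,v,t}\bigl((\gamma(s)x_\gamma)_\gamma\bigr)=\bigl\langle \ell_\lambda,\pi_\lambda(\dot w_0 t s\, n(\mathbf x)s^{-1})v\bigr\rangle .
\]
Since $A$ is commutative, $ts=st$. Also $\dot w_0 s=(w_0 s)\dot w_0$, where $w_0s:=\dot w_0 s\dot w_0^{-1}\in A$. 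The vector $\ell_\lambda$ spans $(V_\lambda^*)^N$, which is the highest weight line of $V_\lambda^*$, of weight $\lambda^*=-w_0\lambda$. Letting $w_0s$ act on the left therefore produces the scalar $\lambda(s)$: the contragredient action gives $\langle \ell_\lambda,\pi_\lambda(a)u\rangle=\langle\pi_\lambda^*(a^{-1})\ell_\lambda,u\rangle$, and taking $a=w_0s$ yields $(-w_0\lambda)((w_0s)^{-1})=\lambda(s)$. On the right, $\pi_\lambda(s^{-1})v=\chi(s)^{-1}v$ because $v$ has weight $\chi$. Combining,
\[
P_{\lambda,v,t}\bigl((\gamma(s)x_\gamma)_\gamma\bigr)=(\lambda-\chi)(s)\,P_{\lambda,v,t}(\mathbf x).
\]

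\textbf{Conclusion and main obstacle.} Lemma~\ref{lem:homogeneous-scaling} now gives $(\lambda-\chi)$-homogeneity, provided $\lambda-\chi\in Q^+$, which holds by Theorem~\ref{thm:highest-weight}. The only delicate step is the sign bookkeeping just carried out: confirming that the $\ell_\lambda$ side contributes exactly $\lambda(s)$ rather than $\lambda^*(s)$ or its inverse. Once that is checked, homogeneity follows. If $P_{\lambda,v,t}=0$, homogeneity holds trivially.
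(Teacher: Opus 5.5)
Your proposal is correct and follows the paper's proof essentially step for step. Polynomiality comes from algebraicity of $\pi_\lambda$ and the $X_\gamma$. Homogeneity comes from Lemma~\ref{lem:homogeneous-scaling}, using $s\,n(\mathbf x)s^{-1}=n((\gamma(s)x_\gamma)_\gamma)$, rewriting $\dot w_0 t s=(\dot w_0 s\dot w_0^{-1})\dot w_0 t$, and extracting $\lambda(s)$ from $\ell_\lambda$ and $\chi(s)^{-1}$ from $v$, with the same sign bookkeeping. (The appeal to unipotence in the polynomiality step is unnecessary, since any morphism $\mathbb G_a\to\GL(V_\lambda)$ already has polynomial matrix entries, but it does no harm.)
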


\begin{proof}
Since $\pi_\lambda:G\to\operatorname{GL}(V_\lambda)$ is algebraic and each root subgroup parametrization $X_\gamma$ is algebraic, the matrix entries of $\pi_\lambda(\dot w_0 t\,n(\mathbf x))$ are regular functions on $F^{|\Phi^+|}$. Hence $P_{\lambda,v,t}$ is a polynomial.

By the highest weight theorem (Theorem~\ref{thm:highest-weight}), every weight $\chi$ of $V_\lambda$ satisfies $\chi\le\lambda$, so $\lambda-\chi\in Q^+$.

We now prove the homogeneity statement. 
By Lemma~\ref{lem:homogeneous-scaling}, it suffices to prove that for all $s\in A$,
$$P_{\lambda,v,t}\bigl((\gamma(s) x_\gamma)_\gamma\bigr)=(\lambda-\chi)(s)\,P_{\lambda,v,t}({\mathbf x}).$$
Let $s\in A$. Conjugation by $s$ acts on root subgroups by $sX_\gamma(u)s^{-1}=X_\gamma(\gamma(s)u)$, and therefore 
$$s\,n({\mathbf x})s^{-1}=\prod_{\gamma\in\Phi^+}(sX_\gamma(x_\gamma)s^{-1})=\prod_{\gamma\in\Phi^+}X_\gamma(\gamma(s)x_\gamma)=n((\gamma(s) x_\gamma)_\gamma).$$
Thus
$$P_{\lambda,v,t}\bigl((\gamma(s) x_\gamma)_\gamma\bigr)=\bigl\langle\ell_\lambda,\pi_\lambda(\dot w_0 t\,n((\gamma(s) x_\gamma)_\gamma))v\bigr\rangle =\bigl\langle\ell_\lambda,\pi_\lambda(\dot w_0 t\,s\,n({\mathbf x})s^{-1})v\bigr\rangle.$$
Since $t$ and $s$ commute, $\dot w_0 t\,s=(\dot w_0 s\dot w_0^{-1})\,\dot w_0 t$.
Put $s^{w_0}=\dot w_0 s\dot w_0^{-1}$, and so
$$P_{\lambda,v,t}\bigl((\gamma(s) x_\gamma)_\gamma\bigr)=\bigl\langle\ell_\lambda,\pi_\lambda(s^{w_0})\pi_\lambda(\dot w_0 t\, n({\mathbf x}))\pi_\lambda(s^{-1})v\bigr\rangle.$$
Because $v$ has weight $\chi$, $\pi_\lambda(s^{-1})v=(-\chi)(s)v$.
Also, $\ell_\lambda$ has weight $-w_0\lambda$ in the contragredient representation, so for every $z\in V_\lambda$,
$$\bigl\langle \ell_\lambda,\pi_\lambda(s^{w_0})z\bigr\rangle=(-w_0\lambda)\bigl((s^{w_0})^{-1}\bigr)\bigl\langle \ell_\lambda,z\bigr\rangle=\lambda(s)\bigl\langle \ell_\lambda,z\bigr\rangle.$$
Hence,
$$P_{\lambda,v,t}\bigl((\gamma(s)x_\gamma)_\gamma\bigr)=\lambda(s)(-\chi)(s)\bigl\langle\ell_\lambda,\pi_\lambda(\dot w_0 t\, n({\mathbf x}))v\bigr\rangle=(\lambda-\chi)(s)P_{\lambda,v,t}(\mathbf x).$$
Since this holds for every $s\in A$, Lemma~\ref{lem:homogeneous-scaling} implies that $P_{\lambda,v,t}$ is $(\lambda-\chi)$-homogeneous.
\end{proof}

\begin{cor}\label{cor:plucker-torus}
Let $i=1,\ldots,m$, and let $\{e_{ij}\}_j$ be a basis of $V_{\omega_i}$ consisting of $A$-weight vectors, say $e_{ij}\in V_{\omega_i}[\chi_{ij}]$. Choose a nonzero vector $\ell_i\in (V_{\omega_i}^*)^N$, and define the Pl\"ucker coordinate polynomial
$$p_{ij}(\mathbf x):=\bigl\langle\ell_i,\pi_{\omega_i}(\dot w_0\,n(\mathbf x))e_{ij}\bigr\rangle.$$
For $t\in A$, set
$$p_{ij,t}(\mathbf x):=\bigl\langle\ell_i,\pi_{\omega_i}(\dot w_0 t\,n(\mathbf x))e_{ij}\bigr\rangle.$$
Then $p_{ij,t}(\mathbf x)=\omega_i(t)\,p_{ij}(\mathbf x)$.
Moreover, $p_{ij}$ is $(\omega_i-\chi_{ij})$-homogeneous.
\end{cor}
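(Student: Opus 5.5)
The plan is to move $t$ to the left of $\dot w_0$ and then use that $\ell_i$ is a weight vector of the contragredient representation.

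\emph{Step 1: moving $t$ past $\dot w_0$.} Write
$$\dot w_0 t = (\dot w_0 t\dot w_0^{-1})\,\dot w_0 = t^{w_0}\dot w_0, \qquad t^{w_0}:=\dot w_0 t\dot w_0^{-1}.$$
This gives
$$p_{ij,t}(\mathbf x)=\bigl\langle\ell_i,\pi_{\omega_i}(t^{w_0})\,\pi_{\omega_i}(\dot w_0 n(\mathbf x))e_{ij}\bigr\rangle.$$

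\emph{Step 2: evaluating the torus action on $\ell_i$.} As in the proof of Theorem~\ref{thm:plucker-homogeneity}, $\ell_i$ has weight $-w_0\omega_i$ for the contragredient action, by Theorem~\ref{thm:highest-weight} applied to $V_{\omega_i}^*$. Hence, for every $z\in V_{\omega_i}$,
$$\langle\ell_i,\pi_{\omega_i}(t^{w_0})z\rangle=\langle\pi^*_{\omega_i}((t^{w_0})^{-1})\ell_i,z\rangle=(-w_0\omega_i)\bigl((t^{w_0})^{-1}\bigr)\langle\ell_i,z\rangle.$$
The scalar simplifies as follows. Since $(w_0\omega_i)(s)=\omega_i(\dot w_0^{-1}s\dot w_0)$, we get
$$(-w_0\omega_i)\bigl((t^{w_0})^{-1}\bigr)=(w_0\omega_i)(t^{w_0})=\omega_i(t).$$
Taking $z=\pi_{\omega_i}(\dot w_0 n(\mathbf x))e_{ij}$ then yields $p_{ij,t}=\omega_i(t)\,p_{ij}$. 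This is the same sign bookkeeping already carried out in the proof of Theorem~\ref{thm:plucker-homogeneity}; it can be quoted from there, with $s=t$ and the $\chi$-factor omitted since no torus element appears on the right.

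\emph{Step 3: homogeneity.} This is immediate from Theorem~\ref{thm:plucker-homogeneity} with $\lambda=\omega_i$, $v=e_{ij}$, $\chi=\chi_{ij}$ and $t=1$, since then $P_{\omega_i,e_{ij},1}=p_{ij}$ is $(\omega_i-\chi_{ij})$-homogeneous.

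The only delicate point is the sign and inverse bookkeeping in the weight of $\ell_i$ and in the action of $w_0$ on characters, which must produce $\omega_i(t)$ rather than $\omega_i(t)^{-1}$ or $(w_0\omega_i)(t)$. I would check it against the identity already established in the proof of Theorem~\ref{thm:plucker-homogeneity}, which has exactly this form.
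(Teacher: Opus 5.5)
Your proposal is correct and follows essentially the same route as the paper: write $\dot w_0 t=(\dot w_0 t\dot w_0^{-1})\dot w_0$, use that $\ell_i$ has weight $-w_0\omega_i$ to extract the scalar $(-w_0\omega_i)(\dot w_0 t^{-1}\dot w_0^{-1})=\omega_i(t)$, then quote Theorem~\ref{thm:plucker-homogeneity} with $t=1$ for the homogeneity. Making the convention $(w_0\omega_i)(s)=\omega_i(\dot w_0^{-1}s\dot w_0)$ explicit is a small improvement, since the paper's sign computation relies on it without stating it.
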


\begin{proof}
Since $\ell_i$ has weight $-w_0\omega_i$ in the contragredient representation, for any $s\in A$ and $z\in V_{\omega_i}$,
$$\bigl\langle \ell_i,\pi_{\omega_i}(s)z\bigr\rangle=(-w_0\omega_i)(s^{-1})\bigl\langle \ell_i,z\bigr\rangle.$$
Taking $s=\dot w_0 t\dot w_0^{-1}$ gives $(-w_0\omega_i)(\dot w_0 t^{-1}\dot w_0^{-1})=\omega_i(t)$,
and hence
$$\bigl\langle\ell_i,\pi_{\omega_i}(\dot w_0 t\dot w_0^{-1})z\bigr\rangle=\omega_i(t)\,\bigl\langle \ell_i,z\bigr\rangle.$$
Taking $z=\pi_{\omega_i}(\dot w_0 n(\mathbf x))e_{ij}$ yields
$$p_{ij,t}(\mathbf x)=\omega_i(t)\,p_{ij}(\mathbf x).$$
The homogeneity assertion follows immediately from Theorem~\ref{thm:plucker-homogeneity} with $\lambda=\omega_i$, $v=e_{ij}$, and $\chi=\chi_{ij}$.
\end{proof}

The preceding results show that each Pl\"ucker coordinate is homogeneous, but they do not yet relate the Pl\"ucker coordinates to the individual root coordinates $x_\gamma$.  The following lemma fills this gap; we will use it in the finiteness proof below.

\begin{lem}\label{lem:plucker-monomial}
For the Pl\"ucker coordinates $p_{ij}$ defined above, and for every $\gamma\in\Phi^+$, there exist $i$ and $j$ such that $p_{ij}$ is $\gamma$-homogeneous and
$$p_{ij}=c_\gamma x_\gamma+g_\gamma,\qquad
c_\gamma\in F^\times,\quad g_\gamma\in F[x_\beta:\beta<\gamma].$$
\end{lem}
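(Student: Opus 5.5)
The plan is to combine the homogeneity statement (Corollary~\ref{cor:plucker-torus}) with a first-order computation at $\mathbf x=0$. The homogeneity pins down the shape of $p_{ij}$, and a weight argument in the fundamental representation produces a nonzero coefficient of $x_\gamma$.

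\emph{Step 1 (shape from homogeneity).} Suppose $p_{ij}$ is $\gamma$-homogeneous, i.e.\ $\chi_{ij}=\omega_i-\gamma$. Every monomial $\mathbf x^{\mathbf k}$ occurring in $p_{ij}$ satisfies $\sum_\beta k_\beta\beta=\gamma$.
\begin{itemize}
\item Each $\beta\in\Phi^+$ is a nonzero element of $Q^+$. So if $k_\gamma\ge1$, the monomial must be exactly $x_\gamma$.
\item Otherwise every $\beta$ with $k_\beta\ge1$ satisfies $\beta\le\gamma$ and $\beta\ne\gamma$. Hence $\beta$ has strictly smaller height, so $\beta<\gamma$ in the fixed height-compatible ordering.
\end{itemize}
Therefore $p_{ij}=c_\gamma x_\gamma+g_\gamma$ with $g_\gamma\in F[x_\beta:\beta<\gamma]$. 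It only remains to find $(i,j)$ with $\chi_{ij}=\omega_i-\gamma$ and $c_\gamma\neq0$.

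\emph{Step 2 (the coefficient $c_\gamma$).} Setting all variables other than $x_\gamma$ to zero gives $n(\mathbf x)=X_\gamma(x_\gamma)=\exp(x_\gamma E_\gamma)$, where $E_\gamma$ is the Chevalley root vector. Hence
\[c_\gamma=\frac{d}{dx_\gamma}\Big|_{0}p_{ij}=\bigl\langle \ell_i,\pi_{\omega_i}(\dot w_0)\,d\pi_{\omega_i}(E_\gamma)e_{ij}\bigr\rangle .\]
The functional $\ell_i':=\ell_i\circ\pi_{\omega_i}(\dot w_0)$ has weight $-\omega_i$, since $\ell_i$ has weight $-w_0\omega_i$. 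Thus $\ell_i'$ is nonzero exactly on the highest weight line $V_{\omega_i}[\omega_i]=F v_i$ and kills all other weight spaces. So $c_\gamma\ne0$ if and only if $E_\gamma e_{ij}$, which lies in $V_{\omega_i}[\omega_i]$, is nonzero.

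\emph{Step 3 (choice of $i$ and $j$).} The coroot $\gamma^\vee$ is a positive coroot, hence a nonnegative combination of the $\alpha_i^\vee$ with some coefficient positive. Choose $i$ with $\langle\omega_i,\gamma^\vee\rangle>0$. Put $w:=F_\gamma v_i\in V_{\omega_i}[\omega_i-\gamma]$. Since $E_\gamma v_i=0$, the $\mathfrak{sl}_2$-relation $[E_\gamma,F_\gamma]=H_\gamma$ gives
\[E_\gamma w=\langle\omega_i,\gamma^\vee\rangle v_i\neq0,\]
because $\operatorname{char}F=0$. Hence $E_\gamma$ is not zero on $V_{\omega_i}[\omega_i-\gamma]$. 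Since the $e_{ij}$ of weight $\omega_i-\gamma$ form a basis of this weight space, some such $e_{ij}$ satisfies $E_\gamma e_{ij}\ne0$. That $p_{ij}$ is $\gamma$-homogeneous with $c_\gamma\in F^\times$, which proves the lemma.

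The only delicate point is Step 2: the identification of $\ell_i\circ\pi_{\omega_i}(\dot w_0)$ as the lowest-weight functional, and the reduction of the coefficient of $x_\gamma$ to the derivative of $X_\gamma$. Both follow directly from the weight computations already made in the proof of Corollary~\ref{cor:plucker-torus}.
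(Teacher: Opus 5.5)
Your proposal is correct and follows essentially the same route as the paper: homogeneity fixes the shape of $p_{ij}$, the coefficient of $x_\gamma$ is $\langle\ell_i,\pi_{\omega_i}(\dot w_0)E_\gamma e_{ij}\rangle$, and the $\mathfrak{sl}_2$-relation $E_\gamma F_\gamma v^+=\langle\omega_i,\gamma^\vee\rangle v^+$ (with $i$ chosen so that $\gamma^\vee$ has a positive $\alpha_i^\vee$-coefficient) shows this coefficient is nonzero for some weight vector $e_{ij}$. The differences are cosmetic: you argue directly rather than by contradiction, and you spell out two points the paper leaves implicit, namely why $\langle\ell_i,\pi_{\omega_i}(\dot w_0)v^+\rangle\neq0$ (via the weight of $\ell_i\circ\pi_{\omega_i}(\dot w_0)$) and why mixed monomials such as $x_\gamma x_\beta$ cannot occur in a $\gamma$-homogeneous polynomial.
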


\begin{proof}
Write $\gamma^\vee=\sum_{i=1}^m c_i(\gamma)\alpha_i^\vee\neq0$, and let $i=i(\gamma)$ be an index such that $c_i(\gamma)\neq0$. We claim that there exists an index $j$ such that $\omega_i-\chi_{ij}=\gamma$ and $p_{ij}(\mathbf x)$ has an $x_\gamma$-term with nonzero coefficient.

Suppose not. Then for every $j$ with $e_{ij}\in V_{\omega_i}[\omega_i-\gamma]$, the polynomial $p_{ij}(\mathbf x)$ does not have the monomial $x_\gamma$. Since such $e_{ij}$ span the weight space $V_{\omega_i}[\omega_i-\gamma]$, we obtain that for every $v\in V_{\omega_i}[\omega_i-\gamma]$, the polynomial
$$p_v(\mathbf x):=\langle \ell_i,\pi_{\omega_i}(\dot w_0\,n(\mathbf x))v\rangle$$
does not have the monomial $x_\gamma$. To obtain a contradiction, we construct a vector $v\in V_{\omega_i}[\omega_i-\gamma]$ such that $p_v$ has an $x_\gamma$-term.

Let $(e_\gamma,f_\gamma,h_\gamma)$ be the $\mathfrak{sl}_2$-triple associated with $\gamma$ \cite{Car72}, normalized by
$$[e_\gamma,f_\gamma]=h_\gamma=\gamma^\vee.$$
Let $v^+$ be a nonzero highest-weight vector of $V_{\omega_i}$. Then $e_\gamma v^+=0$. Set $v_\gamma=f_\gamma v^+$. Then
$$e_\gamma v_\gamma=e_\gamma f_\gamma v^+=[e_\gamma,f_\gamma]v^+=\gamma^\vee v^+=\langle\omega_i,\gamma^\vee\rangle v^+.$$
Since $\langle\omega_i,\gamma^\vee\rangle=c_i(\gamma)\neq0$, we have $v_\gamma\neq0$. Moreover, $v_\gamma$ has weight $\omega_i-\gamma$.

It remains to show that $p_{v_\gamma}$ has an $x_\gamma$-term. Set $x_\beta=0$ for all $\beta\neq\gamma$ and $x_\gamma=s$. Then $n(\mathbf x)=X_\gamma(s)$, and the coefficient of $x_\gamma$ in $p_{v_\gamma}$ is
$$\frac{\mathrm d}{\mathrm ds}\Big|_{s=0}\langle \ell_i,\pi_{\omega_i}(\dot w_0\,X_\gamma(s))v_\gamma\rangle=\langle \ell_i,\pi_{\omega_i}(\dot w_0)e_\gamma v_\gamma\rangle=\langle \ell_i,\pi_{\omega_i}(\dot w_0)v^+\rangle\cdot\langle\omega_i,\gamma^\vee\rangle.$$
Since $\ell_i$ is a nonzero $N$-invariant vector in $V_{\omega_i}^*$, the pairing $\langle \ell_i,\pi_{\omega_i}(\dot w_0)v^+\rangle$ is nonzero. Thus $p_{v_\gamma}$ has an $x_\gamma$-term. This contradiction proves the claim.

Let $j(\gamma)$ be an index such that $\omega_i-\chi_{i,j(\gamma)}=\gamma$ and $p_{i,j(\gamma)}(\mathbf x)$ has an $x_\gamma$-term. Then by Theorem \ref{thm:plucker-homogeneity}, $p_{i,j(\gamma)}$ is $\gamma$-homogeneous. Hence $p_{i,j(\gamma)}-c_\gamma x_\gamma$ is $\gamma$-homogeneous and does not have an $x_\gamma$-term. Therefore $p_{i,j(\gamma)}(\mathbf x)-c_\gamma x_\gamma$ lies in $F[x_\beta:\beta<\gamma]$. This completes the proof.
\end{proof}

\section{Stevens' approach}

In this section, we prove Theorem~\ref{main-thm} following the approach of Stevens \cite{Ste87}.

\subsection{Kloosterman sums}\label{sect:kloosterman}

We keep the notation of \S1: $G$ is a split semisimple simply connected group over $F$, $B=AN$ is a Borel subgroup, and $\psi$ is a fixed generic character of $N$. We use the fixed split Chevalley $\mathcal O$-model and compatible root subgroup parametrizations $X_\gamma$ for all $\gamma\in\Phi$, extending those already chosen for $\gamma\in\Phi^+$. Fix a representative $\dot w_0\in N_G(A)(\mathcal O)$ of the longest Weyl element. Let $\Delta=\{\alpha_1,\dots,\alpha_m\}$ and put $\Phi^{\vee,+}=\{\alpha^\vee:\alpha\in\Phi^+\}$.

Set $K_0=G(\mathcal O)$ and, for $d\ge1$, let $K_d$ be the kernel of the reduction map $G(\mathcal O)\to G(\mathcal O/\mathfrak p^d)$. Put $A_d=A\cap K_d$, $N_d=N\cap K_d$, and $\overline N_d=\overline N\cap K_d$. Then $B\cap K_d=A_dN_d$, and for $d\ge1$ we have
$$K_d=\overline N_dA_dN_d,\qquad A_d=\{a\in A:\chi(a)\in1+\mathfrak p^d\text{ for all }\chi\in X^*(A)\}.$$
Moreover, $\bigcap_{d\ge1}K_d=\{1\}$ and $N_d=\prod_{\gamma\in\Phi^+}X_\gamma(\mathfrak p^d)$ for every $d\ge0$.

For $d\ge1$ large enough so that $\psi|_{N_d}=1$ and for $t\in A$, set
$$X_d(\dot w_0t)=N_d\backslash\bigl(N\dot w_0tN\cap K_d\bigr)/N_d,$$
and for a class $x=[n_1\dot w_0tn_2]\in X_d(\dot w_0t)$ with $n_1,n_2\in N$, put
$$u(x)=[n_1]\in N_d\backslash N,\qquad u'(x)=[n_2]\in N/N_d .$$
The \emph{Kloosterman sums} $\{\Kl_d\}_{d\ge1}$ are the functions on $A$
defined by
$$\Kl_d(t)=\sum_{x\in X_d(\dot w_0t)}\psi\bigl(u(x)\bigr)\psi\bigl(u'(x)\bigr).$$
With the notation of \S1 this reads $\Kl_d(t)=\Kl(t;\psi,\dot w_0,K_d)$.

For $t\in A$, write $t=t_0\lambda_t(\varpi)$ with $t_0\in A(\mathcal O)$ and $\lambda_t\in X_*(A)$. By D\k{a}browski--Reeder \cite[Theorem 0.3]{DR98}, $X_d(\dot w_0t)$ is nonempty only if $\lambda_t\in\mathbb Z_{\ge0}\Phi^{\vee,+}$. Since $G$ is simply connected, it therefore suffices to consider
$$t=t_0\prod_{i=1}^m\alpha_i^\vee(\varpi^{r_i}),\qquad t_0\in A(\mathcal O),\quad r_i\in\mathbb Z_{\ge0}.$$

Let $\delta=\delta_B$ be the modulus character of $B$. By
\cite[Lemma 3.14]{CCL26} there is a constant $C_d>0$, independent of $t$, with
$$|\Kl_d(t)|\,\delta^{1/2}(t)\le C_d\Bigl(1+\sum_{i=1}^m r_i\Bigr)^{|\Phi^{\vee,+}|};$$
thus $|\Kl_d(t)\delta^{1/2}(t)|$ is bounded by a polynomial in the $r_i$.

We say that the system $\{\Kl_d\}_{d\ge1}$ has a \emph{nontrivial bound} if for
all sufficiently large $d$ there exist $\varepsilon>0$ and $C>0$ (possibly
depending on $d$) such that
$$\bigl|\Kl_d(t)\,\delta^{1/2-\varepsilon}(t)\bigr|\le C,\qquad t\in A .$$

\subsection{Stevens' trick}\label{sect:stevens_trick}

Throughout this section we assume that $p\nmid\det(C_G)$, where $C_G$ denotes
the Cartan matrix of $G$; this assumption is used only in the estimate of
\cite[Corollary 5.4]{CCL26} recalled below. By the support condition above, it suffices to consider
$$t=t_0\lambda_t(\varpi),\qquad t_0\in A(\mathcal O),\quad \lambda_t=\sum_{i=1}^m r_i\alpha_i^\vee,\quad r_i\in\mathbb Z_{\ge0}.$$

Using the fixed ordering and root subgroup parametrizations, write every $n\in N$ uniquely as
$$n(\mathbf x)=\prod_{\gamma\in\Phi^+}X_\gamma(x_\gamma),\qquad \mathbf x=(x_\gamma)_{\gamma\in\Phi^+}\in F^{\Phi^+}.$$
For $n\in N$, write $n_\gamma$ for its $\gamma$-coordinate. Recall that $N_d=\prod_{\gamma\in\Phi^+}X_\gamma(\mathfrak p^d)$ for $d\ge0$. For a positive integer $\ell$, put
$$N_{-\ell}:=\prod_{\gamma\in\Phi^+}X_\gamma\bigl(\mathfrak p^{-\ell\operatorname{ht}(\gamma)}\bigr),$$
where $\operatorname{ht}(\gamma)=\sum_{\alpha\in\Delta}\kappa_\alpha(\gamma)$ for $\gamma=\sum_{\alpha\in\Delta}\kappa_\alpha(\gamma)\alpha$.

\begin{prop}\label{prop:u_is_inj}
The maps $u$ and $u'$ are well-defined and injective.
\end{prop}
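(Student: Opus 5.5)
The plan rests on the uniqueness of the Bruhat decomposition for the big cell: every $g\in N\dot w_0 A N$ can be written uniquely as $g=n_1\dot w_0 t n_2$ with $n_1,n_2\in N$ and $t\in A$. This holds because $N\cap \dot w_0 B\dot w_0^{-1}=N\cap\overline B=\{1\}$ in $G$.

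\emph{Well-definedness.} Take a class $x\in X_d(\dot w_0t)$ with representative $g=n_1\dot w_0tn_2\in K_d$. Any other representative has the form $g'=k_1gk_2$ with $k_1,k_2\in N_d$. Then $g'=(k_1n_1)\dot w_0t(n_2k_2)$, and by uniqueness of the decomposition, the $N$-components of $g'$ are exactly $k_1n_1$ and $n_2k_2$. Hence the class $[n_1]\in N_d\backslash N$ and the class $[n_2]\in N/N_d$ depend only on $x$. Note that the decomposition of $g$ itself is unique, so no further ambiguity arises. This shows that $u$ and $u'$ are well defined.

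\emph{Injectivity of $u$.} Suppose $x=[n_1\dot w_0tn_2]$ and $x'=[n_1'\dot w_0tn_2']$ satisfy $u(x)=u(x')$. Replacing the representative of $x'$ by a left $N_d$-translate, we may assume $n_1'=n_1$. Put $g=n_1\dot w_0tn_2$ and $g'=n_1\dot w_0tn_2'$; both lie in $K_d$. Then
\[
g^{-1}g'=n_2^{-1}n_2'\in N\cap K_d=N_d .
\]
Therefore $g'=g\,(n_2^{-1}n_2')\in gN_d$, so $x=x'$. The proof of injectivity for $u'$ is symmetric: normalise so that $n_2'=n_2$, and then
\[
g'g^{-1}=n_1'n_1^{-1}\in N\cap K_d=N_d .
\]

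The only point that needs care is uniqueness of the Bruhat decomposition, including the fact that $t$ is determined by $g$. Suppose $n_1\dot w_0tn_2=n_1'\dot w_0t'n_2'$. Then
\[
\dot w_0^{-1}\bigl(n_1'^{-1}n_1\bigr)\dot w_0=t'\,n_2'n_2^{-1}\,t^{-1}\in \overline N\cap B=\{1\},
\]
since the left-hand side lies in $\overline N$ and the right-hand side lies in $B$. This gives $n_1=n_1'$, and then $t'n_2'=tn_2$, so $t=t'$ and $n_2=n_2'$. I expect no real obstacle; this is a standard argument.
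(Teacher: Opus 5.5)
Your proposal is correct and follows essentially the same route as the paper: uniqueness of the decomposition $g=n_1\dot w_0tn_2$ (which the paper gets from $N\cap \dot w_0tN(\dot w_0t)^{-1}=\{1\}$), then moving representatives by $N_d$-translates and using $g^{-1}g'=n_2^{-1}n_2'\in N\cap K_d=N_d$. Your extra verification that $t$ is determined by $g$ is harmless but not needed, since $t$ is fixed in $X_d(\dot w_0t)$.
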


\begin{proof}
Because $\dot w_0$ is the longest Weyl element, we have
$N \cap \dot w_0t N (\dot w_0t)^{-1} = \{1\}$.  Consequently, every
$g \in N \dot w_0t N$ admits a unique decomposition
$g = n_{1}\dot w_0t n_{2}$ with $n_{1},n_{2}\in N$.

Let $x = N_{d} g N_{d}\in X_{d}(\dot w_0t)$ and write $g = n_{1}\dot w_0t n_{2}$.
Any other representative is of the form $h = \delta_{1}g\,\delta_{2}$
with $\delta_{1},\delta_{2}\in N_{d}$.  By uniqueness,
$h = (\delta_{1}n_{1})\dot w_0t (n_{2}\delta_{2})$. Hence
$u(h)=N_{d}\delta_{1}n_{1}=N_{d}n_{1}=u(g)$ and
$u'(h)=n_{2}\delta_{2}N_{d}=n_{2}N_{d}=u'(g)$. So $u, u'$ are well-defined.

Assume $u(x)=u(y)$ for $x=[g],\,y=[h]$ with
$g=n_{1}\dot w_0t n_{2},\,h=m_{1}\dot w_0t m_{2}\in K_{d}$.
From $N_{d}n_{1}=N_{d}m_{1}$ we obtain $\delta\in N_{d}$ with
$m_{1}=\delta n_{1}$.  Set $h'=\delta^{-1}h = n_{1}\dot w_0t m_{2}$.
Then $g^{-1}h'=n_{2}^{-1}m_{2}\in N\cap K_{d}=N_{d}$; thus
$m_{2}=n_{2}\delta'$ for some $\delta'\in N_{d}$.
Consequently $h = \delta g\delta'\in N_{d}g N_{d}$, so $x=y$, which implies that $u$ is injective.

The proof for $u'$ is symmetric.
\end{proof}

Choose $\ell>d$ such that for every $x\in X_d(\dot w_0 t)$ we have $u(x)\in N_d\backslash N_{-\ell}$ and $u'(x)\in N_{-\ell}/N_d$.
For $x=[n_1\dot w_0 t n_2]\in X_d(\dot w_0 t)$, let
$$\theta_i(x)=(n_1)_{\alpha_i}\in\mathfrak p^{-\ell}/\mathfrak p^d,\qquad\theta_i'(x)=(n_2)_{\alpha_i}\in\mathfrak p^{-\ell}/\mathfrak p^d.$$
Write $\theta'=(\theta_i')_{1\le i\le m}:X_d(\dot w_0 t)\to (\mathfrak p^{-\ell}/\mathfrak p^d)^m$ and set $\nu(x)=\min\{v(x),d\}$. 
For $y\in[-\ell,d]^m$, let $X_d(\dot w_0 t)_y$ be the fibre over $y$ under the composition
$$X_d(\dot w_0 t)\xrightarrow{\theta'}(\mathfrak p^{-\ell}/\mathfrak p^d)^m
\xrightarrow{\nu}[-\ell,d]^m.$$

The following lemma controls the growth of $\ell$. We will defer its proof to Section \ref{sect:finite}.

\begin{lem}\label{lem:ell-growth}
There exists a constant $C>0$, depending only on $G$ and $d$, such that for every $t\in A$ one can choose $\ell>d$ satisfying the above conditions and
$$\ell\le C\left(1+\sum_{i=1}^m r_i\right).$$
\end{lem}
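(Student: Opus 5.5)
We have to show that if $g=n_1\dot w_0 t n_2\in K_d$, then the coordinates of $n_1$ and $n_2$ have valuations bounded below by $-C(1+\sum r_i)$, uniformly in $t_0$. The natural tool is the Plücker coordinates. Set $h=\dot w_0 t n_2=n_1^{-1}g$. Since each $c_{ij}$ is left $N$-invariant, we get
$$c_{ij}(h)=c_{ij}(g)=\langle \ell_i,\pi_{\omega_i}(g)e_{ij}\rangle .$$
Normalize $\ell_i$ and the $e_{ij}$ to be $\CO$-integral for the fixed Chevalley $\CO$-lattice of $V_{\omega_i}$. Then $g\in K_d\subset G(\CO)$ gives $v(c_{ij}(g))\ge0$. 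On the other hand, Corollary~\ref{cor:plucker-torus} gives
$$c_{ij}(h)=\omega_i(t)\,p_{ij}(\mathbf x),\qquad n_2=n(\mathbf x).$$
Since $t_0\in A(\CO)$, we have $v(\omega_i(t))=v\bigl(\omega_i(\lambda_t(\varpi))\bigr)=\langle\omega_i,\sum_k r_k\alpha_k^\vee\rangle=r_i$. Hence
$$v(p_{ij}(\mathbf x))\ge -r_i\quad\text{for all } i,j.$$

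Next I will bound the individual coordinates $x_\gamma$. I proceed by induction on height, using Lemma~\ref{lem:plucker-monomial}. For each $\gamma$ we can write $p_{i,j(\gamma)}=c_\gamma x_\gamma+g_\gamma$ with $g_\gamma\in F[x_\beta:\beta<\gamma]$, and $g_\gamma$ is $\gamma$-homogeneous. Suppose inductively that $v(x_\beta)\ge -M_\beta$ for every $\beta<\gamma$, with $M_\beta$ affine-linear in $R:=\sum r_i$. Each monomial $\prod x_\beta^{k_\beta}$ in $g_\gamma$ satisfies $\sum k_\beta\beta=\gamma$, so $\sum k_\beta\le\operatorname{ht}(\gamma)$. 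Therefore
$$v(g_\gamma(\mathbf x))\ge -\operatorname{ht}(\gamma)\max_{\beta<\gamma}M_\beta-\kappa,$$
where $\kappa$ bounds the negative valuations of the coefficients of $g_\gamma$, a finite list depending only on $G$. Combining this with $v(p_{i,j(\gamma)})\ge -r_i$ gives
$$v(x_\gamma)\ge -M_\gamma,\qquad M_\gamma:=\max\Bigl(R,\ \operatorname{ht}(\gamma)\max_{\beta<\gamma}M_\beta+\kappa\Bigr)+v(c_\gamma).$$
This recursion is again affine-linear in $R$. There are finitely many roots, so by induction $M_\gamma\le C'(1+R)$ for all $\gamma$, with $C'$ depending only on $G$. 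The base case is the simple roots $\alpha_i$: there $g_{\alpha_i}=0$, so $v(x_{\alpha_i})\ge -r_i-v(c_{\alpha_i})$. This bounds $u'(x)$.

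For $u(x)$ I argue symmetrically, using $g^{-1}=n_2^{-1}t^{-1}\dot w_0^{-1}n_1^{-1}\in K_d$. Write $t^{-1}\dot w_0^{-1}=\dot w_0^{-1}t'$, where $t'=\dot w_0 t^{-1}\dot w_0^{-1}$ has $\lambda_{t'}=-w_0\lambda_t$, again a nonnegative combination of simple coroots with the same total size $R$. Replacing $\dot w_0$ by $\dot w_0^{-1}$ changes the Plücker polynomials only by an $\CO$-unit torus factor, so the same argument bounds the coordinates of $n_1^{-1}$. Finally I pass from the coordinates of $n_1^{-1}$ to those of $n_1$. Inversion in these coordinates is polynomial with $\gamma$-homogeneous components, and the same height argument shows the valuations stay linearly bounded.

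We then take $\ell$ larger than $d$ and than all the resulting $M_\gamma/\operatorname{ht}(\gamma)$. This gives $u(x)\in N_d\bs N_{-\ell}$ and $u'(x)\in N_{-\ell}/N_d$ with $\ell\le C(1+R)$. The only delicate points are the integrality normalizations of $\ell_i$ and $e_{ij}$ and the constants $v(c_\gamma)$. These depend only on $G$ and the Chevalley model and are therefore harmless.
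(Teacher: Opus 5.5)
Your proposal is correct and follows essentially the same route as the paper. Integrality of the Pl\"ucker coordinates on $K_0$, combined with Corollary~\ref{cor:plucker-torus}, gives $v(p_{ij}(\mathbf x))\ge -r_i$; a height induction based on Lemma~\ref{lem:plucker-monomial} (packaged in the paper as Proposition~\ref{prop:arale}) then bounds the root coordinates of $n_2$ linearly, and the same argument applied to $g^{-1}$ with $\dot w_0^{-1}$ handles $n_1$. The only differences are cosmetic: the paper's induction yields the sharper height-proportional bound $\operatorname{ht}(\gamma)\bigl(C+\max_i r_i\bigr)$ directly, and it passes from $n_1^{-1}$ to $n_1$ by noting that $N_{-\ell}$ is a subgroup rather than through homogeneity of the inversion map.
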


We next record a bound for $\#X_d(\dot w_0 t)$.

\begin{prop}\label{prop:trivial-Xd}
There exists a constant $C_d>0$, independent of $t_0$ and $r$, such that for every $t=t_0\lambda_t(\varpi)$ with $t_0\in A(\mathcal O)$ and $\lambda_t=\sum_{i=1}^m r_i\alpha_i^\vee$, $r_i\in\mathbb Z_{\ge0}$, one has
$$|X_d(\dot w_0 t)|\le C_d\,q^{\sum_{i=1}^m r_i}\left(1+\sum_{i=1}^m r_i\right)^{|\Phi^{\vee,+}|}.$$
\end{prop}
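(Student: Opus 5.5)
We reduce to the Kloosterman sum bound already available in the literature, combined with a lower bound on $\delta^{1/2}(t)$ for the elements $t$ in question. We take the bound
$$|\Kl_d(t)|\,\delta^{1/2}(t)\le C_d\Bigl(1+\sum_{i=1}^m r_i\Bigr)^{|\Phi^{\vee,+}|}$$
from \cite[Lemma 3.14]{CCL26}. However, this bounds a weighted character sum, not the cardinality. So it is better to follow the same argument that produces it, which in \cite{CCL26} comes from a count of $X_d(\dot w_0t)$ anyway. The plan is therefore to bound $|X_d(\dot w_0 t)|$ directly via the injectivity of $u$ (Proposition~\ref{prop:u_is_inj}).

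\textbf{Step 1 (embedding into a box).} By Proposition~\ref{prop:u_is_inj}, $|X_d(\dot w_0t)|\le |\,u(X_d(\dot w_0t))\,|$, so it suffices to count the possible classes $[n_1]\in N_d\backslash N$. We write $g=n_1\dot w_0 t n_2\in K_d$. Then $g^{-1}=n_2^{-1}t^{-1}\dot w_0^{-1}n_1^{-1}$ also lies in $K_d\subset K_0$. Consider the Plücker coordinates $c_{ij}(g)=\langle \ell_i,\pi_{\omega_i}(g)e_{ij}\rangle$ for the row of $g$. These are integral for $g\in K_0$, after normalizing $\ell_i,e_{ij}$ to be $\CO$-lattice bases of the Chevalley model. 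Apply this instead to $g^{-1}$, or equivalently use the left-invariant functions on $G/N$. Together with Corollary~\ref{cor:plucker-torus}, this gives that the polynomials $p_{ij}$ evaluated at the coordinates of $n_1^{-1}$ (suitably transposed) satisfy
$$v(p_{ij})\ge -\langle\omega_i,\lambda_t\rangle\ \text{ up to a bounded shift}.$$
Hence the coordinates of $n_1$ lie in a set of the shape $\mathcal T_{y,r}$.

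\textbf{Step 2 (counting modulo $N_d$).} Rather than invoke the later perturbation machinery, we count crudely. By Lemma~\ref{lem:plucker-monomial} and the triangular structure ($p_{ij}=c_\gamma x_\gamma+g_\gamma$ with $g_\gamma$ depending only on lower roots), we proceed by induction on height. Once $x_\beta$ for $\beta<\gamma$ are fixed modulo $\mathfrak p^d$, the coordinate $x_\gamma$ is constrained to a coset of a fractional ideal of depth at most $r_{i(\gamma)}$ plus $v(c_\gamma)$. This gives $O_d(q^{\text{depth}})$ choices modulo $\mathfrak p^d$. Multiplying naively would give an exponent $\sum_\gamma$ of depths, which is too large; the fibration over valuations must be used instead.

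\textbf{Step 3 (main step, following \cite[Corollary 5.4]{CCL26}).} The cleanest route is the fibre decomposition. Write $X_d(\dot w_0t)=\bigsqcup_y X_d(\dot w_0t)_y$ with $y\in[-\ell,d]^m$. The number of $y$ is at most $(\ell+d+1)^m$, and by Lemma~\ref{lem:ell-growth} this is $\ll(1+\sum r_i)^m$. On each fibre, the Bruhat-cell volume computation gives $|X_d(\dot w_0 t)_y|\le C_d\,\delta^{-1/2}(t)\,\cdot$(a factor polynomial in $r$). Here $\delta^{-1/2}(t)=q^{\langle\rho,\lambda_t\rangle}$, and $\langle\rho,\alpha_i^\vee\rangle=1$, so $\delta^{-1/2}(t)=q^{\sum r_i}$. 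The fibre volume count proceeds by comparing the Haar measure of $N\dot w_0 tN\cap K_d$ with $\mathrm{vol}(N_d)^2$: the measure of $K_d$ pushed onto the big cell is $\delta^{-1}$-twisted in $t$. Concretely, $\mathrm{vol}(K_d\cap N\dot w_0 tN)\le \mathrm{vol}(K_d)$, while each double coset $N_d g N_d$ has volume at least $\mathrm{vol}(N_d)^2\,\delta(t)^{-1}$ times a unit from the Jacobian. We expect this to give $|X_d|\ll q^{2\sum r_i}$, which is off by a square root. Hence the main obstacle is obtaining exponent $\sum r_i$ rather than $2\sum r_i$.

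To get the correct exponent, we use injectivity of $u$ alone. The classes $u(x)$ lie in $N_d\backslash N$, and by Step 1 their coordinates satisfy $\nu(x_\gamma)\ge -\langle\omega_{i},\lambda_t\rangle$-type bounds only on the simple-root coordinates together with Plücker constraints. The total number of such classes is bounded by the measure of $\{n: \text{all } v(c_{ij}(\dot w_0 t n))\ge 0\}$ divided by $\mathrm{vol}(N_d)$. This measure is the classical Gindikin–Karpelevich integral $\int_N \mathbf 1_{K_0}(\dot w_0 tn)\,dn$, which is bounded by $q^{\langle\rho,\lambda_t\rangle}\cdot(1+\sum r_i)^{|\Phi^{+}|}$ (Macdonald's formula: the intersection of $N\dot w_0tN$-type sets with $K_0$ is controlled by the polynomial in $q$ given by the Satake/Kostant partition count). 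The number of coroot partitions of $\lambda_t$ is $\ll(1+\sum r_i)^{|\Phi^{\vee,+}|}$, which produces the stated power. We expect this Macdonald-type volume bound to be the crux, and would first attempt it by decomposing $K_0\cap N\dot w_0 t N$ via the Iwasawa decomposition and summing over Kostant partitions.
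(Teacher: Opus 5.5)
Your proposal does not yet prove the statement. Steps 1–2 and the first half of Step 3 are either abandoned or unsupported. The per-fibre bound $|X_d(\dot w_0t)_y|\le C_d\,\delta^{-1/2}(t)\cdot(\text{poly})$ is asserted without argument, and the Haar-measure comparison you sketch is, as you note yourself, off by a square root. So everything rests on the final volume estimate, which you explicitly leave as something you ``expect''.

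Moreover, the integral you name there is the wrong one. $\int_N\mathbf 1_{K_0}(\dot w_0tn)\,dn$ is not the Gindikin--Karpelevich integral, and it vanishes for nontrivial $t$. For instance, for $\SL_2$ with $t=\diag(\varpi^r,\varpi^{-r})$ and $r>0$, the matrix $\dot w_0tn$ has the entry $-\varpi^{-r}$, so it never lies in $K_0$.

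What is actually needed is the following. The map $u'$ is injective, and its image consists of classes of elements $n_2$ with $\dot w_0tn_2\in NK_0$; for $d=0$, every such class occurs. Moreover, $\dot w_0tn\in NK_0$ is equivalent to $\dot w_0n\in N\mu(\varpi)K_0$ with $\mu:=-w_0\lambda_t$. Hence
$$|X_d(\dot w_0t)|\le|N_0/N_d|\cdot\frac{\mathrm{vol}\{n\in N:\ \dot w_0n\in N\mu(\varpi)K_0\}}{\mathrm{vol}(N_0)}.$$
Normalize $\mathrm{vol}(N_0)=1$ and let $\phi_\chi$ be the spherical vector of the unramified principal series. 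Then this volume equals $\delta^{-1/2}(\mu(\varpi))=q^{\sum_ir_i}$ times the coefficient of $\chi(\mu(\varpi))$ in
$$\int_N\phi_\chi(\dot w_0n)\,dn=\prod_{\alpha>0}\frac{1-q^{-1}\chi(\alpha^\vee(\varpi))}{1-\chi(\alpha^\vee(\varpi))}.$$
That is, the volume is $q^{\sum_ir_i}\sum(1-q^{-1})^{\kappa((n_\gamma))}$, summed over the Kostant partitions of $\mu$.

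The larger set $\{n:v(c_{ij}(\dot w_0tn))\ge0\ \forall i,j\}$ that you first wrote would also work. It is the union of the analogous sets for $0\le\mu'\le\mu$, whose volumes are dominated by a geometric series. But it needs the same computation.

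This coefficient extraction is exactly the Dąbrowski--Reeder formula \cite[Theorem 0.3]{DR98}, and citing it is the paper's proof:
\begin{enumerate}
\item Reduce $X_d$ to $X_0$, using that the fibres of $X_d(\dot w_0t)\to X_0(\dot w_0t)$ have at most $|N_0/N_d|^2$ elements.
\item Quote \cite{DR98} to get $|X_0(\dot w_0t)|\ll q^{\sum_ir_i}R(\lambda_t)$.
\item Bound the Kostant partition function by $R(\lambda_t)\le(1+\sum_ir_i)^{|\Phi^{\vee,+}|}$.
\end{enumerate}
So your last idea points in the right direction, but the decisive estimate is missing and, as written, misidentified. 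You need either to carry out the computation above or to cite \cite{DR98}.
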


\begin{proof}
Let $X(\dot w_0 t):=N_0\backslash\bigl(N \dot w_0 t N\cap K_0\bigr)/N_0$.
Since $K_d\subset K_0$, the natural map $X_d(\dot w_0 t)\to X(\dot w_0 t)$ has fibres of size at most $|N_0/N_d|^2<\infty$, which depends only on $d$. Hence $\#X_d(\dot w_0 t)\le C_d\#X(\dot w_0 t)$ for some $C_d>0$. It remains to bound the size of the set $X(\dot w_0t)$.

By \cite[Theorem 0.3]{DR98}, if $\lambda_t=\sum_{\gamma\in\Phi^{\vee,+}}n_\gamma\gamma$ with $n_\gamma\in\mathbb Z_{\ge0}$, then
$$|X(\dot w_0 t)|\asymp\delta^{-1/2}(t)\sum_{\substack{(n_\gamma)_\gamma\\
\sum_\gamma n_\gamma\gamma=\lambda_t}}\left(1-\frac1q\right)^{\kappa((n_\gamma))},$$
where $\kappa((n_\gamma))=\#\{\gamma:n_\gamma>0\}$. By \cite[Lemma 3.13]{CCL26}, we have $\delta^{-1/2}(t)=q^{\sum_{i=1}^m r_i}$. Hence
$$|X(\dot w_0 t)|\le C\,q^{\sum_{i=1}^m r_i}\,R(\lambda_t),$$
where $R(\lambda_t)$ is the Kostant partition function. By \cite[Lemma 3.14]{CCL26},
$$R(\lambda_t)\le \left(1+\sum_{i=1}^m r_i\right)^{|\Phi^{\vee,+}|}.$$
Combining these proves the proposition.
\end{proof}

We now recall Stevens' trick estimate from \cite[Corollary 5.4]{CCL26}. Although stated for the standard generic character in \cite{CCL26}, the same estimate holds for any fixed generic character $\psi$.
For $d>0$ sufficiently large, there exists an integer $\ell>0$, depending on $d$ and $t$, such that
$$|\mathrm{Kl}_d(t)|\le q^{m(d+k/2)}\sum_{y\in[-\ell,d]^m}|X_d(\dot w_0 t)_y|\,q^{\frac12\sum_{i=1}^m y_i},$$
where $k$ is a fixed nonnegative integer depending on $F$.

\begin{defn}[Fibre estimate]\label{def:fibre-estimate}
We say that the \emph{fibre estimate} holds for a given $d$ if there exist
constants $\varepsilon>0$ and $C>0$, possibly depending on $d$, such that for
every $t=t_0\prod_{i=1}^m\alpha_i^\vee(\varpi^{r_i})\in A$ with $t_0\in A(\mathcal O)$ and $r_i\ge0$, every
$\ell>d$ chosen as in Lemma~\ref{lem:ell-growth} for this $t$ and $d$, and
every $y\in[-\ell,d]^m$, one has
$$|X_d(\dot w_0 t)_y|\,q^{\frac12\sum_{i=1}^m y_i}
\le C\,q^{(1-\varepsilon)\sum_{i=1}^m r_i}.$$
\end{defn}

\begin{thm}\label{thm:fibre-implies-kl}
If the fibre estimate holds for all sufficiently large $d$, then $\{\mathrm{Kl}_d\}_{d\ge 1}$ satisfies a nontrivial bound. 
More precisely, there exist constants $\varepsilon_d>0$, $C_d>0$ such that
$$|\mathrm{Kl}_d(t)\delta^{1/2-\varepsilon_d}(t)|\le C_d,\qquad t\in A.$$
\end{thm}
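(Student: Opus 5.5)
Fix $d$ large enough that the fibre estimate holds and that Stevens' trick estimate (\cite[Corollary 5.4]{CCL26}, recalled above) applies. Take $t=t_0\prod_i\alpha_i^\vee(\varpi^{r_i})$ with $t_0\in A(\mathcal O)$, $r_i\ge0$. By the support result of D\k{a}browski--Reeder, every other $t$ has $X_d(\dot w_0t)=\emptyset$, so $\mathrm{Kl}_d(t)=0$ and there is nothing to prove. Choose $\ell$ as in Lemma~\ref{lem:ell-growth}, so that $\ell\le C(1+\sum_i r_i)$. Stevens' trick then gives
$$|\mathrm{Kl}_d(t)|\le q^{m(d+k/2)}\sum_{y\in[-\ell,d]^m}|X_d(\dot w_0t)_y|\,q^{\frac12\sum_i y_i}.$$

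We insert the fibre estimate termwise: each summand is at most $C\,q^{(1-\varepsilon)\sum_i r_i}$. The index set has cardinality $(\ell+d+1)^m\le C'(1+\sum_i r_i)^m$, with $C'$ depending only on $G$ and $d$. Hence
$$|\mathrm{Kl}_d(t)|\le C''\Bigl(1+\sum_i r_i\Bigr)^m q^{(1-\varepsilon)\sum_i r_i}.$$

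Next we convert this into a bound in terms of $\delta$. By \cite[Lemma 3.13]{CCL26}, $\delta^{-1/2}(t)=q^{\sum_i r_i}$ (the unit part $t_0$ contributes nothing), so $\delta^{1/2-\varepsilon'}(t)=q^{-(1-2\varepsilon')\sum_i r_i}$. Choose $\varepsilon_d=\varepsilon/4$, so that $1-2\varepsilon_d=1-\varepsilon/2$. Then
$$|\mathrm{Kl}_d(t)\delta^{1/2-\varepsilon_d}(t)|\le C''\Bigl(1+\sum_i r_i\Bigr)^m q^{-\frac{\varepsilon}{2}\sum_i r_i}.$$
Since a polynomial is dominated by an exponential, the right-hand side is bounded uniformly in $r\in\mathbb Z_{\ge0}^m$. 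This gives the constant $C_d$.

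The only point needing care is uniformity in $t$, which has two parts. First, the constants in the fibre estimate must not depend on $t$, $\ell$ or $y$; this is built into Definition~\ref{def:fibre-estimate}. Second, the number of fibres must grow only polynomially in $\sum_i r_i$; this is exactly what Lemma~\ref{lem:ell-growth} supplies, and I expect it to be the main obstacle, though it is deferred. If one only has $\ell$ depending on $t$ in an uncontrolled way, the argument breaks. Finally, we should also confirm that the Stevens-trick constant $q^{m(d+k/2)}$ is independent of $t$, as stated.
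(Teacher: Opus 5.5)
Your proposal is correct and is essentially the paper's proof. Both apply Stevens' trick estimate, insert the fibre estimate termwise, bound the number of indices $y$ polynomially in $\sum_i r_i$ via Lemma~\ref{lem:ell-growth}, use $\delta(t)=q^{-2\sum_i r_i}$, and take $\varepsilon_d=\varepsilon/4$ so that the resulting decay $q^{-\frac{\varepsilon}{2}\sum_i r_i}$ absorbs the polynomial factor.
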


\begin{proof}
By Stevens' trick estimate,
$$|\mathrm{Kl}_d(t)|\le q^{m(d+k/2)}\sum_{y\in[-\ell,d]^m}|X_d(\dot w_0 t)_y|\,q^{\frac12\sum_{i=1}^m y_i}.$$
Applying the assumed fibre bound to each summand gives
$$|\mathrm{Kl}_d(t)|\le C\,q^{m(d+k/2)}\sum_{y\in[-\ell,d]^m}q^{(1-\varepsilon)\sum_{i=1}^m r_i}.$$
By Lemma~\ref{lem:ell-growth}, $(\ell+d+1)^m\le C_1\left(1+\sum_{i=1}^m r_i\right)^m$ for some constant $C_1>0$. Hence
$$|\mathrm{Kl}_d(t)|\le C_2\left(1+\sum_{i=1}^m r_i\right)^m q^{(1-\varepsilon)\sum_{i=1}^m r_i}.$$
Since $\delta(t)=q^{-2\sum_i r_i}$, we have $\delta^{1/2-\varepsilon_d}(t)=q^{-(1-2\varepsilon_d)\sum_i r_i}$. Therefore
$$|\mathrm{Kl}_d(t)\delta^{1/2-\varepsilon_d}(t)|\le C_2\left(1+\sum_{i=1}^m r_i\right)^m q^{(-\varepsilon+2\varepsilon_d)\sum_i r_i}.$$
Choosing $\varepsilon_d=\varepsilon/4$, we may absorb the polynomial factor by the exponential decay, and the result follows.
\end{proof}

\subsection{Reduction to the shallow fibre estimate}\label{sect:reduce_to_shallow}

Our next goal is to reduce the fibre estimate to the case $d=0$ and then to a shallow range of the valuation vector $y$.

\begin{prop}\label{prop:reduct-d0}
If the fibre estimate holds for $d=0$, then it holds for every sufficiently large $d$. 
\end{prop}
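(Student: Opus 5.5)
The plan is to compare the level-$d$ Kloosterman sets with the level-$0$ ones through the natural projection, as in the proof of Proposition~\ref{prop:trivial-Xd}, and to check that this projection is compatible with the fibre decomposition by $y$. Fix $d\ge1$ large enough that $\psi|_{N_d}=1$, and fix $t=t_0\prod_i\alpha_i^\vee(\varpi^{r_i})$. Since $K_d\subset K_0$ and $N_d\subset N_0$, sending $N_d g N_d$ to $N_0 g N_0$ gives a well-defined map
$$\phi: X_d(\dot w_0t)\lra X_0(\dot w_0t).$$
Its fibres have cardinality at most $|N_0/N_d|^2$, a constant $C_d$ depending only on $d$.

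Next I check compatibility with $\theta'$ and $\nu$. By uniqueness of the decomposition $g=n_1\dot w_0tn_2$ (Proposition~\ref{prop:u_is_inj}), $\phi$ commutes with $u'$ followed by the projection $N/N_d\to N/N_0$. The map $N\to\mathbb G_a^m$, $n\mapsto(n_{\alpha_i})_i$, is the abelianization homomorphism onto $N/[N,N]\cong\prod_{\alpha\in\Delta}N_\alpha$. Hence right multiplication by $\delta\in N_0$ changes $n_{\alpha_i}$ by $\delta_{\alpha_i}\in\CO$. Therefore the level-$0$ invariant $\theta'_i(\phi(x))\in\mathfrak p^{-\ell}/\CO$ is the reduction of $\theta'_i(x)\in\mathfrak p^{-\ell}/\mathfrak p^d$.

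It follows that if $\min\{v(\theta_i'(x)),d\}=y_i$, then $\min\{v(\theta'_i(\phi x)),0\}=\min\{y_i,0\}=:y_i^-$. So
$$\phi\bigl(X_d(\dot w_0t)_y\bigr)\subset X_0(\dot w_0t)_{y^-}.$$
The same $\ell$ serves at both levels: $u(x)\in N_d\bs N_{-\ell}$ implies $u(\phi x)\in N_0\bs N_{-\ell}$, and $\ell>d>0$. Consequently $y^-\in[-\ell,0]^m$ lies in the range where the $d=0$ fibre estimate applies. This use of the same $\ell$ is the one point to check carefully against the quantifier ``every $\ell$ chosen as in Lemma~\ref{lem:ell-growth}'' in Definition~\ref{def:fibre-estimate}. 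If that lemma's choice at level $0$ is meant to be a specific $\ell$, I will instead take $\ell$ to be the larger of the two choices, since the conditions defining $\ell$ are monotone in $\ell$.

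Combining these facts, and writing $y_i=y_i^-+\max\{y_i,0\}$ with $\max\{y_i,0\}\le d$, I expect
$$|X_d(\dot w_0t)_y|\,q^{\frac12\sum_iy_i}\le C_d\,|X_0(\dot w_0t)_{y^-}|\,q^{\frac12\sum_iy_i^-}\,q^{md/2}\le C_dCq^{md/2}\,q^{(1-\varepsilon)\sum_ir_i}.$$
This is the fibre estimate at level $d$ with the same $\varepsilon$ and constant $C_dCq^{md/2}$. The main obstacle is only bookkeeping: verifying that the simple-root coordinates are additive, so that reduction modulo $N_0$ acts on them by $\CO$-translations. I would verify this via the commutator relations, which put all non-simple contributions in $[N,N]$ and show that $[N,N]$ has trivial simple-root coordinates in the chosen ordering by height.
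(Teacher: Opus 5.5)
Your argument is correct and is essentially the paper's proof: the paper also projects from level $d$ to level $0$, uses that $\nu\circ\theta'$ is compatible with $y\mapsto y^-=(\min\{y_i,0\})_i$, and absorbs the factor $q^{\frac12\sum_i\max\{y_i,0\}}\le q^{md/2}$ into the constant. The differences are minor. The paper bounds the fibres by $|N_0/N_d|$, using injectivity of $u'$ and the projection $N/N_d\to N/N_0$, rather than by $|N_0/N_d|^2$. It also does not discuss the choice of $\ell$ at all. On that point, your first justification only covers points in the image of $\phi$, so it is your fallback that actually settles it: enlarge $\ell$, or note that a level-$0$ fibre with $y^-$ outside the admissible range is empty.
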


\begin{proof}
Consider the commutative diagram
$$
\begin{tikzcd}
N_d\backslash (N\dot w_0tN\cap K_d)/N_d
  \arrow[d, "i", hook]
  \arrow[rr, "u'", hook]
& &
N/N_d
  \arrow[dd, "j", two heads]
  \arrow[rr, "\psi:=\nu\circ\theta'"]
& &
{[-\ell,d]^m}
  \arrow[dd, "\iota", two heads]
\\
N_d\backslash (N\dot w_0tN\cap K_0)/N_d
  \arrow[d, "i'", two heads]
& &
& &
\\
N_0\backslash (N\dot w_0tN\cap K_0)/N_0
  \arrow[rr, "u'", hook]
& &
N/N_0
  \arrow[rr, "\psi':=\nu'\circ\theta'"]
& &
{[-\ell,0]^m}
\end{tikzcd}
$$
Here $\iota$ is given by $\iota\bigl((x_i)_{1\le i\le m}\bigr)=\bigl(\min(x_i,0)\bigr)_{1\le i\le m}$, and we set
$$\nu(x)=\min\{v(x),d\},\qquad\nu'(x)=\min\{v(x),0\}.$$
Thus, for any $y\in[-\ell,d]^m$, we have
$$\begin{aligned}|X_d(\dot w_0t)_y|=|\psi^{-1}(y)|&\le |N_0/N_d|\cdot |j(\psi^{-1}(y))|\\&\le |N_0/N_d|\cdot |\psi'^{-1}(\iota(y))|\\&\le |N_0/N_d|\cdot |X_0(\dot w_0t)_{\iota(y)}|.\end{aligned}$$
By the assumed estimate for the base case $d=0$, we have
$$|X_0(\dot w_0t)_y|\,q^{\frac12\sum_{i=1}^m y_i}\le C\,q^{(1-\varepsilon)\sum_{i=1}^m r_i},\qquad y\in[-\ell,0]^m.$$
Applying this bound to the inequality above yields
$$\begin{aligned}|X_d(\dot w_0t)_y|\,q^{\frac12\sum_{i=1}^m y_i}&\le C_d\cdot |X_0(\dot w_0t)_{\iota(y)}|q^{\frac12\sum_{i=1}^m (\min(y_i,0)+d)}\\
&\le C_d'\cdot q^{(1-\varepsilon)\sum_{i=1}^m r_i}.\end{aligned}$$
This proves the fibre estimate for $d$.
\end{proof}

\begin{defn}[Shallow fibre estimate]\label{def:shallow-fibre}
We say that the \emph{shallow fibre estimate} holds if there exist constants
$\varepsilon>0$, $\varepsilon'>0$ and $C>0$ such that for every
$t=t_0\prod_{i=1}^m\alpha_i^\vee(\varpi^{r_i})\in A$ with $t_0\in A(\mathcal O)$ and $r_i\ge0$, every
$\ell>0$ chosen as in Lemma~\ref{lem:ell-growth} for this $t$ (with $d=0$),
and every $y\in[-\ell,0]^m$ satisfying
$\sum_{i=1}^m y_i\ge-\varepsilon'\sum_{i=1}^m r_i$, one has
$$|X_0(\dot w_0t)_y|\,q^{\frac12\sum_{i=1}^m y_i}
\le C\,q^{(1-\varepsilon)\sum_{i=1}^m r_i}.$$
\end{defn}

\begin{prop}\label{prop:shallow-implies-fibre}
The shallow fibre estimate implies the fibre estimate for $d=0$. Consequently, by Proposition~\ref{prop:reduct-d0} and Theorem~\ref{thm:fibre-implies-kl}, it suffices to prove the shallow fibre estimate in order to establish the nontrivial bound for $\mathrm{Kl}_d(t)$.
\end{prop}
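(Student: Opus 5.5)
We must show: if $\sum_i y_i\ge-\varepsilon'\sum_i r_i$ the estimate holds (that is the hypothesis), and for deep $y$, i.e. $\sum_i y_i<-\varepsilon'\sum_i r_i$, we need
$$|X_0(\dot w_0t)_y|\,q^{\frac12\sum_i y_i}\le C\,q^{(1-\varepsilon'')\sum_i r_i}.$$
For deep $y$ I would use the trivial bound of Proposition~\ref{prop:trivial-Xd} with $d=0$, namely $|X_0(\dot w_0 t)_y|\le|X_0(\dot w_0t)|\le C_0\,q^{\sum_i r_i}(1+\sum_i r_i)^{|\Phi^{\vee,+}|}$. Since $y_i\le 0$, this gives
$$|X_0(\dot w_0t)_y|\,q^{\frac12\sum_i y_i}\le C_0\Bigl(1+\sum_i r_i\Bigr)^{|\Phi^{\vee,+}|}q^{(1-\varepsilon'/2)\sum_i r_i}.$$
The polynomial factor is absorbed by lowering the exponent: for any $0<\eta<\varepsilon'/2$ we have $(1+R)^{k}q^{-(\varepsilon'/2-\eta)R}\le C_\eta$ uniformly in $R\ge0$. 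So the deep range satisfies the bound with exponent $1-\eta$, where we may take $\eta=\varepsilon'/4$.

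Combining the two ranges, the fibre estimate for $d=0$ holds with $\varepsilon_{\mathrm{new}}=\min(\varepsilon,\varepsilon'/4)$ and $C_{\mathrm{new}}=\max(C,C_\eta)$. These constants are uniform in $t_0$, in $r$, and in the choice of $\ell$ from Lemma~\ref{lem:ell-growth}, since the trivial bound does not involve $\ell$ or $y$.

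The second sentence of the statement then follows by chaining: Proposition~\ref{prop:reduct-d0} upgrades the $d=0$ fibre estimate to every sufficiently large $d$, and Theorem~\ref{thm:fibre-implies-kl} turns that into the nontrivial bound for $\mathrm{Kl}_d$.

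There is no real obstacle here. The only point to watch is that the constant in Proposition~\ref{prop:trivial-Xd} is independent of $t_0$ and $r$, so that the absorption of the polynomial factor is uniform.
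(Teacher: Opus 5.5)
Your proposal is correct and matches the paper's proof essentially line for line. Both split at $\sum_i y_i=-\varepsilon'\sum_i r_i$, apply the trivial bound of Proposition~\ref{prop:trivial-Xd} in the deep range, and absorb the polynomial factor by taking $\varepsilon_{\mathrm{new}}=\min\{\varepsilon,\varepsilon'/4\}$. One small correction: the saving $q^{-\frac{\varepsilon'}{2}\sum_i r_i}$ comes from the deep-range condition $\sum_i y_i<-\varepsilon'\sum_i r_i$, not from $y_i\le 0$ alone, though your displayed inequality is right.
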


\begin{proof}
Assume that the shallow fibre estimate holds with constants $\varepsilon,\varepsilon'>0$ and $C>0$. Let $y\in[-\ell,0]^m$. We distinguish two cases.

If $\sum_{i=1}^m y_i\ge -\varepsilon'\sum_{i=1}^m r_i$, then the desired bound is exactly the shallow fibre estimate.

Otherwise, $\sum_{i=1}^m y_i<-\varepsilon'\sum_{i=1}^m r_i$. By Proposition~\ref{prop:trivial-Xd},
$$|X_0(\dot w_0t)_y|\le \#X_0(\dot w_0t)\le C_0\,q^{\sum_{i=1}^m r_i}\left(1+\sum_{i=1}^m r_i\right)^{|\Phi^{\vee,+}|}.$$
Hence
$$|X_0(\dot w_0t)_y|\,q^{\frac12\sum_{i=1}^m y_i}\le C_0\,q^{(1-\frac{\varepsilon'}2)\sum_{i=1}^m r_i}\left(1+\sum_{i=1}^m r_i\right)^{|\Phi^{\vee,+}|}.$$
Replacing $\varepsilon$ by $\min\{\varepsilon,\varepsilon'/4\}$ if necessary, we may absorb the polynomial factor by $q^{\varepsilon\sum_{i=1}^m r_i}$, and obtain
$$|X_0(\dot w_0t)_y|\,q^{\frac12\sum_{i=1}^m y_i}\le C_1\,q^{(1-\varepsilon)\sum_{i=1}^m r_i}$$
for some constant $C_1>0$ independent of $t$ and $y$. This proves the fibre estimate for $d=0$.
\end{proof}

\subsection{Perturbation method}\label{sect:perturbation}

Throughout this subsection we take $d=0$ and write $X(\dot w_0t)_y:=X_0(\dot w_0t)_y$. Let $\nu(x)=\min\{v(x),0\}$ for $x\in F$.

For $y=(y_1,\ldots,y_m)\in [-\ell,0]^m$ and $r=(r_1,\ldots,r_m)\in \mathbb Z_{\ge0}^m$, define the set
$$\mathcal T_{y,r}:=\left\{\mathbf x=(x_\gamma)_\gamma\in F^{\Phi^+}\ \Big|\ \nu(x_{\alpha_i})=y_i,\quad v(p_{ij}(\mathbf x))\ge -r_i\text{ for all } i,j\right\}.$$
We equip this set with the equivalence relation
$$\mathbf x\sim \mathbf x'\iff n(\mathbf x)N_0 = n(\mathbf x')N_0.$$

In this subsection we will prove the following theorem, which reduces the shallow fibre estimate to a combinatorial problem.

\begin{thm}[Perturbation method]\label{thm:perturbation}
Suppose that there exist constants $\varepsilon>0$ and $C>0$ such that, 
for all $r=(r_1,\ldots,r_m)\in\mathbb Z_{\ge0}^m$,
$$|\mathcal T_{\mathbf 0,r}/{\sim}|\le C\,q^{(1-\varepsilon)\sum_{i=1}^m r_i}.$$
Then the Kloosterman sums $\{\mathrm{Kl}_d\}_{d\ge1}$ satisfy a nontrivial bound.
\end{thm}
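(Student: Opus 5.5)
By Proposition~\ref{prop:shallow-implies-fibre}, Proposition~\ref{prop:reduct-d0} and Theorem~\ref{thm:fibre-implies-kl}, it suffices to establish the shallow fibre estimate. The plan has three steps. First bound $|X(\dot w_0t)_y|$ by $|\mathcal T_{y,r}/{\sim}|$. Then compare $\mathcal T_{y,r}/{\sim}$ with $\mathcal T_{\mathbf 0,r}/{\sim}$ through the perturbation map. Finally use the shallowness of $y$ to absorb the fibre factor.

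\textbf{Step 1 (embedding the Kloosterman set).} Let $x=[n_1\dot w_0 t n_2]\in X(\dot w_0t)_y$ and write $n_2=n(\mathbf x)$. Since $g=n_1\dot w_0tn_2\in K_0=G(\mathcal O)$ and $\ell_i$, $e_{ij}$ may be taken integral for the Chevalley model, each Plücker coordinate $c_{ij}(g)$ lies in $\mathcal O$. By left $N$-invariance,
\[
c_{ij}(g)=\langle\ell_i,\pi_{\omega_i}(\dot w_0tn(\mathbf x))e_{ij}\rangle=\omega_i(t)p_{ij}(\mathbf x)
\]
by Corollary~\ref{cor:plucker-torus}. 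Moreover $\omega_i(t)=\omega_i(t_0)\varpi^{r_i}$ because $\langle\omega_i,\alpha_j^\vee\rangle=\delta_{ij}$. Hence $v(p_{ij}(\mathbf x))\ge -r_i$. By the definition of the fibre, $\nu(x_{\alpha_i})=y_i$. So $u'$ gives a map $X(\dot w_0t)_y\to\mathcal T_{y,r}/{\sim}$ (class of $n_2$ in $N/N_0$), which is injective by Proposition~\ref{prop:u_is_inj}. One must check that $\sim$ is exactly equality in $N/N_0$; this holds by definition.

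\textbf{Step 2 (perturbation map).} Choose $s_y\in A$ with $\alpha_i(s_y)=\varpi^{-y_i}$; such an $s_y$ exists because $G$ is simply connected, namely $s_y=\prod_i\omega_i^\vee(\varpi^{-y_i})$ in the coweight lattice. It suffices that $s_y$ exists in the adjoint torus, since only the conjugation action $n\mapsto s_y n s_y^{-1}$ is needed, and that is defined via the adjoint group. Conjugation sends $x_\gamma\mapsto\gamma(s_y)x_\gamma=x_\gamma\prod_i\varpi^{-y_i\kappa_{\alpha_i}(\gamma)}$. By Theorem~\ref{thm:plucker-homogeneity}, $p_{ij}$ is multiplied by $(\omega_i-\chi_{ij})(s_y)$. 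This is $\varpi$ to a nonnegative power, since $y_i\le0$ and $\omega_i-\chi_{ij}\in Q^+$, so the constraint $v(p_{ij})\ge -r_i$ is preserved. The valuation $\nu(x_{\alpha_i})=y_i$ becomes $v=0$, so $\nu=0$. Thus the scaling maps $\mathcal T_{y,r}$ into $\mathcal T_{\mathbf 0,r}$. It respects $\sim$ because $s_yN_0s_y^{-1}\supseteq N_0$, so it descends to $\mathcal T_{y,r}/{\sim}\to\mathcal T_{\mathbf 0,r}/{\sim}$. The fibres are controlled as follows: two classes with the same image differ by an element of $s_y^{-1}N_0s_y$ modulo $N_0$. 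Hence every fibre has size at most
\[
[s_y^{-1}N_0s_y:N_0]=\prod_\gamma q^{-\sum_i y_i\kappa_{\alpha_i}(\gamma)}=q^{-\sum_i\rho(\alpha_i)y_i}.
\]
The index computation uses the product decomposition $N_0=\prod_\gamma X_\gamma(\mathcal O)$ and a filtration by height to count cosets.

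\textbf{Step 3 (shallow range).} Combining the two steps,
\[
|X(\dot w_0t)_y|\,q^{\frac12\sum y_i}\le C\,q^{-(\max_i\rho(\alpha_i))\sum_i y_i}\,q^{(1-\varepsilon)\sum r_i}.
\]
When $\sum y_i\ge-\varepsilon'\sum r_i$ with $\varepsilon'=\varepsilon/(2\max_i\rho(\alpha_i))$, the right side is at most $C\,q^{(1-\varepsilon/2)\sum r_i}$. This is the shallow fibre estimate, which concludes the proof.

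I expect the main obstacle to be Step 2's fibre count. Specifically, one must verify that the induced map on classes is well defined and that its fibres inject into $s_y^{-1}N_0s_y/N_0$. This requires care with the noncommutativity of $N$ and the chosen ordered coordinates. I would handle it group-theoretically, working with cosets $nN_0$ and the conjugate $s_ynN_0s_y^{-1}$, rather than in coordinates. Integrality of the $\ell_i,e_{ij}$ in Step 1 is a secondary point; it follows from choosing admissible lattices in the Weyl modules.
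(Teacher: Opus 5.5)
Your proposal is correct and follows the paper's proof essentially step for step. Step 1 is Lemma~\ref{lem:X-fibre-bound}, Step 2 is the perturbation map of Lemma~\ref{lem:Tyr-to-T0r}, and Step 3 makes the same choice $\varepsilon'=\varepsilon/(2\rho_{\max})$. The only difference is that you realize $x_\gamma\mapsto u_\gamma x_\gamma$ as conjugation by an adjoint-torus element, whereas the paper checks via the Chevalley commutator formula that it defines an automorphism $\sigma$ of $N$.

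Two slips should be fixed:
\begin{enumerate}
\item The existence of $s_y$ does not follow from $G$ being simply connected, and the fundamental coweights generally do not lie in $X_*(A)$. Already for $\SL_2$ no $s\in A$ has $\alpha(s)=\varpi^{-y}$ with $y$ odd, so passing to the adjoint torus is necessary, not optional.
\item The inclusion that gives well-definedness is $s_yN_0s_y^{-1}\subseteq N_0$ (because $v(u_\gamma)\ge 0$), not $\supseteq$. This is also the direction your index $[s_y^{-1}N_0s_y:N_0]$ presupposes.
\end{enumerate}
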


The proof is in two steps. Lemma~\ref{lem:X-fibre-bound} bounds each fibre
$X(\dot w_0t)_y$ by $|\mathcal T_{y,r}/{\sim}|$, while Lemma~\ref{lem:Tyr-to-T0r}
compares the latter one with $\mathcal T_{\mathbf 0,r}/{\sim}$; combining
the two yields the shallow fibre estimate, and the theorem follows from
Proposition~\ref{prop:shallow-implies-fibre}.

\begin{lem}\label{lem:X-fibre-bound}
For every $y\in[-\ell,0]^m$ and every $r\in\mathbb Z_{\ge0}^m$, one has
$$|X(\dot w_0t)_y|\le |\mathcal T_{y,r}/{\sim}|,$$
where $t=t_0\prod_{i=1}^m\alpha_i^\vee(\varpi^{r_i})$ with arbitrary $t_0\in A(\mathcal O)$.
\end{lem}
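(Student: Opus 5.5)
The plan is to construct an injection $X(\dot w_0t)_y\hookrightarrow \mathcal T_{y,r}/{\sim}$ using the map $u'$. By Proposition~\ref{prop:u_is_inj}, $u':X(\dot w_0t)\to N/N_0$ is injective. For $x=[n_1\dot w_0tn_2]\in X(\dot w_0t)_y$, write $n_2=n(\mathbf x)$ with $\mathbf x\in F^{\Phi^+}$. We send $x$ to the class of $\mathbf x$ in $\mathcal T_{y,r}/{\sim}$. This is well defined and injective provided $\mathbf x\in\mathcal T_{y,r}$, because the equivalence relation $\sim$ is exactly equality of $n(\mathbf x)N_0$, i.e.\ equality of $u'(x)$. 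The condition $\nu(x_{\alpha_i})=y_i$ holds by the definition of the fibre, since $\theta_i'(x)=(n_2)_{\alpha_i}$. This condition is independent of the representative of $n_2N_0$: right multiplication by $N_0$ changes the simple-root coordinates by elements of $\mathcal O$, because the $\alpha_i$-coordinate of a product in $N$ is additive. Hence $\nu=\min\{v,0\}$ is unchanged.

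The main step is the Plücker inequality $v(p_{ij}(\mathbf x))\ge -r_i$. Set $g=n_1\dot w_0tn_2\in K_0=G(\mathcal O)$. The vectors $\ell_i$ and $e_{ij}$ should be chosen integral, i.e.\ compatible with the $\mathcal O$-model, so that $\pi_{\omega_i}(G(\mathcal O))$ preserves the lattice spanned by the $e_{ij}$ and $\ell_i$ pairs it into $\mathcal O$. Rescaling by a fixed nonzero constant at worst shifts valuations by a bounded amount; I will assume, as the setup implicitly does, that a Chevalley lattice is used so there is no shift. Then $c_{ij}(g)=\langle\ell_i,\pi_{\omega_i}(g)e_{ij}\rangle\in\mathcal O$.

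Left $N$-invariance of $\ell_i$ gives $c_{ij}(g)=\langle \ell_i,\pi_{\omega_i}(\dot w_0 t n_2)e_{ij}\rangle=p_{ij,t}(\mathbf x)$. By Corollary~\ref{cor:plucker-torus}, $p_{ij,t}(\mathbf x)=\omega_i(t)p_{ij}(\mathbf x)$. Since $t=t_0\prod_k\alpha_k^\vee(\varpi^{r_k})$ with $t_0\in A(\mathcal O)$ and $\langle\omega_i,\alpha_k^\vee\rangle=\delta_{ik}$, we get $v(\omega_i(t))=r_i$. Hence $v(p_{ij}(\mathbf x))=v(c_{ij}(g))-r_i\ge -r_i$, as required.

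Thus $x\mapsto[\mathbf x]$ maps $X(\dot w_0t)_y$ into $\mathcal T_{y,r}/{\sim}$, and it is injective by injectivity of $u'$. The only delicate point I expect is the integrality of the pairing $\langle\ell_i,\pi_{\omega_i}(K_0)e_{ij}\rangle\subset\mathcal O$. I would handle it by taking $V_{\omega_i}$ to be the Weyl module over $\mathcal O$, i.e.\ an admissible lattice stable under $G(\mathcal O)$ in the Chevalley model, with $e_{ij}$ an $\mathcal O$-basis of weight vectors and $\ell_i$ the dual of the lowest weight vector. Note that $\ell_i$ is $N$-invariant exactly when it is the functional picking out the lowest weight component.
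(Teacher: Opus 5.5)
Your proposal is correct and follows essentially the same route as the paper. Injectivity of $[g]\mapsto[\mathbf x]$ comes from injectivity of $u'$, and the Pl\"ucker bound comes from left $N$-invariance of $\ell_i$, the identity $p_{ij,t}=\omega_i(t)p_{ij}$, and $v(\omega_i(t))=r_i$.

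The only difference is how you secure $\langle\ell_i,\pi_{\omega_i}(k)e_{ij}\rangle\in\mathcal O$ for $k\in K_0$. The paper rescales each $\ell_i$ using compactness of $K_0$, which works for an arbitrary weight basis $\{e_{ij}\}$. You instead fix a $G(\mathcal O)$-stable Weyl-module lattice with an $\mathcal O$-basis of weight vectors. Either normalization suffices.
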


\begin{proof}
Let $[g]\in X(\dot w_0t)_y$ and choose a representative $g\in K_0$. 
Write $g=n_1\dot w_0tn_2$ with $n_1,n_2\in N$, and choose $\mathbf x\in F^{\Phi^+}$ 
with $n(\mathbf x)N_0=n_2N_0$, i.e. $n_2=n(\mathbf x)n_0$ for some $n_0\in N_0$. We have
$$n_1\dot w_0t\,n(\mathbf x)=gn_0^{-1}\in K_0 .$$
Using that $\ell_i$ is fixed by $N$, and that $\dot w_0t\,n(\mathbf x)=n_1^{-1}gn_0^{-1}$, 
the left $N$-invariance gives
$$p_{ij,t}(\mathbf x)=\langle\ell_i,\pi_{\omega_i}(\dot w_0t\,n(\mathbf x))e_{ij}\rangle=\langle\ell_i,\pi_{\omega_i}(gn_0^{-1})e_{ij}\rangle ,$$
so the matrix coefficient is evaluated at the point $gn_0^{-1}\in K_0$. 
After multiplying each $\ell_i$ by a suitable nonzero scalar, we may
assume that
$$
\left\langle \ell_i,\pi_{\omega_i}(k)e_{ij}\right\rangle
\in\mathcal O
\qquad
\text{for every }k\in K_0\text{ and every }j.
$$
This is possible because $K_0$ is compact. Then $v(p_{ij,t}(\mathbf x))\ge0$ for all $i,j$.
By Corollary~\ref{cor:plucker-torus}, $p_{ij,t}(\mathbf x)=\omega_i(t)p_{ij}(\mathbf x)$. Since $t=t_0\prod_{k=1}^m\alpha_k^\vee(\varpi^{r_k})$ and $\omega_i(t_0)\in\mathcal O^\times$, we have $v(\omega_i(t))=r_i$. Hence
$$v(p_{ij}(\mathbf x))\ge-r_i\qquad(1\le i\le m,\ j).$$
Moreover, since the $\alpha_i$-coordinate of $n_2=n(\mathbf x)n_0$ is $x_{\alpha_i}+(n_0)_{\alpha_i}$ 
and $\nu((n_2)_{\alpha_i})=y_i$, we have $\nu(x_{\alpha_i})=y_i$.
Hence $\mathbf x\in\mathcal T_{y,r}$. 

The map $X(\dot w_0t)_y\to\mathcal T_{y,r}/{\sim}$, $[g]\mapsto[\mathbf x]$, is well defined, 
because another representative of $[g]$ replaces $n_2$ by $n_2m_0$ with $m_0\in N_0$, 
hence $\mathbf x$ by a point of the same class; and it is injective, 
because $\mathbf x\sim\mathbf x'$ implies
$$u'([g])=n(\mathbf x)N_0=n(\mathbf x')N_0=u'([g']),$$
so that $[g]=[g']$ by the injectivity of $u'$. This proves the lemma.
\end{proof}

\begin{lem}[A perturbation map] \label{lem:Tyr-to-T0r}
Let $y=(y_1,\ldots,y_m)\in[-\ell,0]^m$ and $r\in\mathbb Z_{\ge0}^m$. Put
$$u_\gamma:=\varpi^{-\sum_{i=1}^m\kappa_{\alpha_i}(\gamma)y_i}\qquad(\gamma\in\Phi^+),$$
and consider the following \emph{perturbation map}
$$\phi:F^{\Phi^+}\longrightarrow F^{\Phi^+},\qquad \phi(\mathbf x)_\gamma:=u_\gamma x_\gamma .$$
Then $\phi(\mathcal T_{y,r})\subseteq\mathcal T_{\mathbf 0,r}$, and $\phi$ is compatible with the equivalence relations. Hence $\phi$ induces a map
$$\tilde\phi:\ \mathcal T_{y,r}/{\sim}\ \longrightarrow\ \mathcal T_{\mathbf 0,r}/{\sim}.$$
Moreover, every fibre of $\tilde\phi$ has at most $q^{-\sum_{i=1}^m\rho(\alpha_i)y_i}$ elements, where $\rho(\alpha):=\sum_{\gamma\in\Phi^+}\kappa_\alpha(\gamma)$ depends only on the root system.
\end{lem}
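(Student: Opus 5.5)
The plan is to realize $\phi$ as conjugation by a torus element and then read off all three assertions from the homogeneity results. Choose $s\in A$ with $\alpha_i(s)=\varpi^{-y_i}$ for every $i$. Such an $s$ exists because $G$ is simply connected, so $X_*(A)$ is spanned by the fundamental coweights $\omega_i^\vee$; take $s=\prod_i\omega_i^\vee(\varpi^{-y_i})$. Then $\gamma(s)=\prod_i\alpha_i(s)^{\kappa_{\alpha_i}(\gamma)}=u_\gamma$ for every $\gamma\in\Phi^+$. As in the proof of Theorem~\ref{thm:plucker-homogeneity}, $s\,n(\mathbf x)s^{-1}=n(\phi(\mathbf x))$, so $\phi$ is induced by $\mathrm{Ad}(s)$ on $N$.

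\emph{Image and valuations.} By Corollary~\ref{cor:plucker-torus}, $p_{ij}$ is $(\omega_i-\chi_{ij})$-homogeneous. Lemma~\ref{lem:homogeneous-scaling} then gives
$$p_{ij}(\phi(\mathbf x))=(\omega_i-\chi_{ij})(s)\,p_{ij}(\mathbf x).$$
Since $\omega_i-\chi_{ij}=\sum_k n_k\alpha_k$ with $n_k\ge0$, we get $v\bigl((\omega_i-\chi_{ij})(s)\bigr)=-\sum_k n_ky_k\ge0$, because $y_k\le0$. Hence $v(p_{ij}(\phi(\mathbf x)))\ge v(p_{ij}(\mathbf x))\ge-r_i$. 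For the simple coordinates, $\phi(\mathbf x)_{\alpha_i}=\varpi^{-y_i}x_{\alpha_i}$. If $y_i<0$, then $v(x_{\alpha_i})=y_i$ and the new valuation is $0$. If $y_i=0$, then $v(x_{\alpha_i})\ge0$ and nothing changes. In both cases $\nu(\phi(\mathbf x)_{\alpha_i})=0$, so $\phi(\mathcal T_{y,r})\subseteq\mathcal T_{\mathbf 0,r}$.

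\emph{Compatibility with $\sim$.} Suppose $n(\mathbf x')=n(\mathbf x)n_0$ with $n_0\in N_0$. Then $n(\phi(\mathbf x'))=n(\phi(\mathbf x))\,sn_0s^{-1}$. Writing $n_0=\prod X_\gamma(a_\gamma)$ with $a_\gamma\in\CO$, we have $sn_0s^{-1}=\prod X_\gamma(u_\gamma a_\gamma)$. Each $u_\gamma$ has valuation $-\sum_i\kappa_{\alpha_i}(\gamma)y_i\ge0$, so $sn_0s^{-1}\in N_0$, and $\tilde\phi$ is well defined.

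\emph{Fibre bound.} This is the only step needing care. Suppose $\tilde\phi([\mathbf x])=\tilde\phi([\mathbf x'])$. Then $s\,n(\mathbf x)^{-1}n(\mathbf x')\,s^{-1}\in N_0$, i.e. $n(\mathbf x)^{-1}n(\mathbf x')\in s^{-1}N_0s$. So the fibre through $[\mathbf x]$ injects into $n(\mathbf x)\,(s^{-1}N_0s)N_0/N_0$. Its size is at most
$$[s^{-1}N_0s:s^{-1}N_0s\cap N_0]\le[s^{-1}N_0s:sN_0s^{-1}]=[N_0:s^2N_0s^{-2}],$$
using $sN_0s^{-1}\subseteq N_0\cap s^{-1}N_0s$. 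It is cleaner to argue directly. The set $s^{-1}N_0s=\prod_\gamma X_\gamma(u_\gamma^{-1}\CO)$ is a group containing $N_0$, and the fibre size is at most $[s^{-1}N_0s:N_0]$. Using the product decomposition in a fixed ordering, which is compatible with filtrations by height via the commutator relations (the standard fact that $\prod_\gamma X_\gamma(\mathfrak a_\gamma)$ is a group with Haar measure the product measure), this index equals
$$\prod_\gamma|\CO/u_\gamma\CO|=q^{\sum_\gamma v(u_\gamma)}=q^{-\sum_i\rho(\alpha_i)y_i}.$$
Alternatively, the index equals the modulus ratio $\delta_B(s)^{-1}$, computed as the product of the $|\gamma(s)|^{-1}$. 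That gives exactly the stated bound, and verifying this index computation is the main point to check.
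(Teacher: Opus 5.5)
Your overall strategy is the paper's: homogeneity for the valuation bounds, an automorphism of $N$ scaling each $X_\gamma$ by $u_\gamma$ for compatibility, and the index of $\prod_\gamma X_\gamma(u_\gamma^{-1}\mathcal O)$ over $N_0$ for the fibres. However, the step everything rests on is false. For simply connected $G$, the cocharacter lattice $X_*(A)$ is spanned by the simple coroots $\alpha_i^\vee$. The fundamental coweights span the cocharacter lattice of the \emph{adjoint} torus, and they lie in $X_*(A)$ only when $\det C_G=1$.

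Concretely, write $s=s_0\lambda(\varpi)$ with $s_0\in A(\mathcal O)$ and $\lambda\in X_*(A)$. Then $v(\alpha_j(s))=\langle\alpha_j,\lambda\rangle$, so an $s\in A(F)$ with $\alpha_i(s)=\varpi^{-y_i}$ for all $i$ exists only if $-y$ lies in a certain sublattice of $\mathbb Z^m$ of index $\det C_G$. For $\mathrm{SL}_2$, $\alpha(\mathrm{diag}(a,a^{-1}))=a^2$ has even valuation, so $y_1=-1$ cannot be realized. For $\mathrm{SL}_3$ one needs $y_1\equiv y_2\pmod 3$. All three parts of your argument pass through $\phi=\operatorname{Ad}(s)$: the scaling of $p_{ij}$ via Lemma~\ref{lem:homogeneous-scaling} (which is stated for elements of $A$), the compatibility with $\sim$, and the group $s^{-1}N_0s$. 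So the proof as written only covers groups such as $G_2$ with unimodular Cartan matrix.

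The repair is easy, and it is how the paper proceeds: do without a torus element. By $(\omega_i-\chi_{ij})$-homogeneity, substituting $\phi(\mathbf x)$ multiplies each monomial $\mathbf x^{\mathbf k}$ of $p_{ij}$ by $\prod_\gamma u_\gamma^{k_\gamma}=\varpi^{-\sum_k\kappa_{\alpha_k}(\omega_i-\chi_{ij})y_k}\in\mathcal O$, which is a direct computation. For the other two parts, define $\sigma(\prod_\gamma X_\gamma(t_\gamma))=\prod_\gamma X_\gamma(u_\gamma t_\gamma)$. Because $u_{i\gamma+j\delta}=u_\gamma^iu_\delta^j$, $\sigma$ respects the Chevalley commutator relations and is an automorphism of $N$ with inverse given by $(u_\gamma^{-1})_\gamma$. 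Then $\sigma(N_0)\subseteq N_0$, and the fibre of $\tilde\phi$ over $mN_0$ is contained in the set of $N_0$-cosets in $\sigma^{-1}(m)\sigma^{-1}(N_0)$. Finally $[\sigma^{-1}(N_0):N_0]=\prod_\gamma q^{v(u_\gamma)}=q^{-\sum_i\rho(\alpha_i)y_i}$, which is exactly your final computation.

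Alternatively, keep your argument verbatim but take $s$ in the adjoint torus $A^{\mathrm{ad}}(F)$, which acts on $G$ by conjugation with $X_\gamma(t)\mapsto X_\gamma(\gamma(s)t)$. In that case get the scaling of $p_{ij}$ from the monomial computation rather than from Lemma~\ref{lem:homogeneous-scaling}. You should also delete the detour through $[s^{-1}N_0s:sN_0s^{-1}]$, which only yields the square of the required bound.
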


\begin{proof}
By Corollary~\ref{cor:plucker-torus}, $p_{ij}$ is $\omega_i-\chi_{ij}$-homogeneous. Replacing $\mathbf x$ by $\phi(\mathbf x)$ therefore multiplies each monomial $\mathbf x^{\mathbf k}$ of $p_{ij}$ by
$$\prod_{\gamma\in\Phi^+}u_\gamma^{k_\gamma}
=\varpi^{-\sum_{k=1}^m\left(\sum_{\gamma\in\Phi^+}\kappa_{\alpha_k}(\gamma)k_\gamma\right)y_k}
=\varpi^{-\sum_{k=1}^m\kappa_{\alpha_k}(\omega_i-\chi_{ij})y_k};$$
that is,
$$p_{ij}(\phi(\mathbf x))=\varpi^{-\sum_{k=1}^m\kappa_{\alpha_k}(\omega_i-\chi_{ij})y_k}\,p_{ij}(\mathbf x)
\qquad\text{for all }i,j .$$
Since $y_k\le0$ and $\kappa_{\alpha_k}(\omega_i-\chi_{ij})\ge0$, the scalar in front lies in $\mathcal O$, whence $v(p_{ij}(\phi(\mathbf x)))\ge v(p_{ij}(\mathbf x))\ge-r_i$. Moreover $u_{\alpha_i}=\varpi^{-y_i}$, so $v(\phi(\mathbf x)_{\alpha_i})=v(x_{\alpha_i})-y_i$; this equals $0$ when $y_i<0$, because then $v(x_{\alpha_i})=y_i$, and it is $\ge0$ when $y_i=0$. Hence $\nu(\phi(\mathbf x)_{\alpha_i})=0$ in both cases, and $\phi(\mathbf x)\in\mathcal T_{\mathbf 0,r}$.

For the compatibility with the equivalence relations, consider
$$\sigma:N\longrightarrow N,\qquad
\sigma\Bigl(\prod_{\gamma\in\Phi^+}X_\gamma(t_\gamma)\Bigr)
:=\prod_{\gamma\in\Phi^+}X_\gamma(u_\gamma t_\gamma),$$
so that $n(\phi(\mathbf x))=\sigma(n(\mathbf x))$. Since $\kappa_{\alpha_i}(\gamma+\delta)=\kappa_{\alpha_i}(\gamma)+\kappa_{\alpha_i}(\delta)$, the family $(u_\gamma)_\gamma$ is multiplicative, $u_{\gamma+\delta}=u_\gamma u_\delta$; together with the Chevalley commutator formula \cite[Theorem 5.2.2]{Car72} this shows that $\sigma$ is a group automorphism of $N$, with inverse given by $(u_\gamma^{-1})_\gamma$. Moreover $v(u_\gamma)=-\sum_{i=1}^m\kappa_{\alpha_i}(\gamma)y_i\ge0$, so $u_\gamma\mathcal O\subseteq\mathcal O$ and $\sigma(N_0)\subseteq N_0$. If $\mathbf x\sim\mathbf x'$, that is, $n(\mathbf x')=n(\mathbf x)n_0$ with $n_0\in N_0$, then
$$n(\phi(\mathbf x'))=\sigma\bigl(n(\mathbf x')\bigr)
=\sigma\bigl(n(\mathbf x)\bigr)\sigma(n_0)
=n(\phi(\mathbf x))\cdot\sigma(n_0),$$
and $\sigma(n_0)\in N_0$; therefore $\phi(\mathbf x)\sim\phi(\mathbf x')$, so that $\tilde\phi$ is well defined.

It remains to bound the fibres of $\tilde\phi$. Let $C_0=mN_0$ be a coset of $N_0$ in $N$. Then
$$\phi^{-1}(C_0)=\{\mathbf x:\ n(\mathbf x)\in\sigma^{-1}(mN_0)\}
=\sigma^{-1}(m)\cdot\sigma^{-1}(N_0),$$
and
$$\sigma^{-1}(N_0)=\prod_{\gamma\in\Phi^+}X_\gamma\bigl(u_\gamma^{-1}\mathcal O\bigr)
=\prod_{\gamma\in\Phi^+}X_\gamma\bigl(\mathfrak p^{-v(u_\gamma)}\bigr)$$
is a subgroup of $N$ containing $N_0$, of index
$$[\,\sigma^{-1}(N_0):N_0\,]=\prod_{\gamma\in\Phi^+}q^{v(u_\gamma)} .$$
Hence $\phi^{-1}(C_0)$ is a union of exactly that many cosets of $N_0$, and these are precisely the classes of $\sim$ contained in $\phi^{-1}(C_0)$. Consequently every fibre of $\tilde\phi$ has at most $\prod_{\gamma\in\Phi^+}q^{v(u_\gamma)}$ elements. Finally,
$$\sum_{\gamma\in\Phi^+}v(u_\gamma)
=-\sum_{\gamma\in\Phi^+}\sum_{i=1}^m\kappa_{\alpha_i}(\gamma)y_i
=-\sum_{i=1}^m\rho(\alpha_i)y_i,$$
which gives the asserted bound $q^{-\sum_{i=1}^m\rho(\alpha_i)y_i}$.
\end{proof}

We can now prove Theorem~\ref{thm:perturbation}.

\begin{proof}[Proof of Theorem~\ref{thm:perturbation}]
Let $\varepsilon_0>0$ and $C_0>0$ be as in the hypothesis, and put $\rho_{\max}:=\max_{\alpha\in\Delta}\rho(\alpha)$. Choose
$$\varepsilon':=\frac{\varepsilon_0}{2\rho_{\max}},\qquad \varepsilon:=\frac{\varepsilon_0}{2}.$$
Let $r\in\mathbb Z_{\ge0}^m$, let $t_0\in A(\mathcal O)$, put $t=t_0\prod_{i=1}^m\alpha_i^\vee(\varpi^{r_i})$, and let $\ell>0$ be chosen as in Lemma~\ref{lem:ell-growth}. Let $y\in[-\ell,0]^m$ satisfy
$$\sum_{i=1}^m y_i\ \ge\ -\varepsilon'\sum_{i=1}^m r_i .$$
Choose $z=(z_\alpha)_{\alpha\in\Delta}$ with $v(z_{\alpha_i})=y_i$, so that Lemma~\ref{lem:Tyr-to-T0r} applies. By Lemma~\ref{lem:X-fibre-bound} and Lemma~\ref{lem:Tyr-to-T0r},
$$|X(\dot w_0t)_y|\ \le\ |\mathcal T_{y,r}/{\sim}|\ \le\ q^{-\sum_{\alpha\in\Delta}\rho(\alpha) y_\alpha}\,|\mathcal T_{\mathbf 0,r}/{\sim}| .$$
Since $y_\alpha\le0$ and $\rho(\alpha)\le\rho_{\max}$ for all $\alpha$, we have
$$-\sum_{\alpha\in\Delta}\rho(\alpha) y_\alpha
\ \le\ \rho_{\max}\sum_{\alpha\in\Delta}(-y_\alpha)
\ =\ \rho_{\max}\Bigl(-\sum_{i=1}^m y_i\Bigr)
\ \le\ \rho_{\max}\varepsilon'\sum_{i=1}^m r_i ,$$
while the hypothesis gives $|\mathcal T_{\mathbf 0,r}/{\sim}|\le C_0\,q^{(1-\varepsilon_0)\sum_{i=1}^m r_i}$. Hence
$$|X(\dot w_0t)_y|\ \le\ C_0\,q^{(1-\varepsilon_0+\rho_{\max}\varepsilon')\sum_{i=1}^m r_i}
\ =\ C_0\,q^{(1-\varepsilon)\sum_{i=1}^m r_i}.$$
As $y_i\le0$, we have
$$|X(\dot w_0t)_y|\,q^{\frac12\sum_{i=1}^m y_i}\ \le\ C_0\,q^{(1-\varepsilon)\sum_{i=1}^m r_i}.$$
The constants $\varepsilon,\varepsilon',C_0$ depend neither on $t$ nor on $\ell$ nor on $y$. Thus the shallow fibre estimate holds, and the theorem follows from Proposition~\ref{prop:shallow-implies-fibre}.
\end{proof}

To make this theorem practically useful, we introduce the following corollary, which will be used frequently in the next section.

Fix a section $\mathcal R\subset F$ of $F/\mathcal O$ containing $0$, and let $q_\gamma\in F[\mathbf x]$ $(\gamma\in\Phi^+)$ be polynomials such that
$$q_{\alpha_i}=x_{\alpha_i},\qquad q_\gamma=x_\gamma+f_\gamma,\qquad f_\gamma\in F[x_\beta:\beta<\gamma].$$
For $r=(r_1,\ldots,r_m)\in\mathbb Z_{\ge0}^m$, define \emph{the Pl\"ucker-constrained set}
$$S_r:=\left\{\mathbf x=(x_\gamma)_\gamma\in F^{\Phi^+}\ \Big|\ x_{\alpha_i}=0,\quad q_\gamma(\mathbf x)\in\mathcal R\text{ for all }\gamma\in\Phi^+\setminus\Delta,\quad v(p_{ij}(\mathbf x))\ge -r_i\text{ for all }i,j\right\}.$$
In the next section, the polynomials $q_\gamma$ allow us to change coordinates before counting, and the section $\mathcal R$ allows us to prescribe the values taken by these coordinates; for instance, $q_\gamma=x_\gamma$ together with $\mathcal R\ni0$ fixes the simple-root coordinates at $0$.

\begin{cor}\label{cor:practical}
Suppose that there exist constants $\varepsilon>0$ and $C>0$ such that,
for all $r\in\mathbb Z_{\ge0}^m$,
$$|S_r|\le C\,q^{(1-\varepsilon)\sum_{i=1}^m r_i}.$$
Then the Kloosterman sums $\{\mathrm{Kl}_d\}_{d\ge1}$ satisfy a nontrivial bound.
\end{cor}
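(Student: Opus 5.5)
The plan is to show that $\mathcal T_{\mathbf 0,r}/{\sim}$ injects into $S_r$. Then $|\mathcal T_{\mathbf 0,r}/{\sim}|\le|S_r|\le C\,q^{(1-\varepsilon)\sum_i r_i}$, and Theorem~\ref{thm:perturbation} gives the nontrivial bound. The injection comes from choosing, in each class of $\sim$ (a coset $n(\mathbf x)N_0$), a canonical representative $\mathbf x$ with $q_\gamma(\mathbf x)\in\mathcal R$ for all $\gamma\in\Phi^+$. So there are three steps: (a) a normal form for $N/N_0$, (b) invariance of the defining conditions of $\mathcal T_{\mathbf 0,r}$ under $\sim$, and (c) identifying the normal-form representatives of $\mathcal T_{\mathbf 0,r}$ with points of $S_r$.

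\textbf{Step (a): normal form.} I would prove that every coset $nN_0$ contains exactly one $\mathbf x$ with $q_\gamma(\mathbf x)\in\mathcal R$ for all $\gamma$. The key structural input is this: in the product $n(\mathbf x)=\prod_\gamma X_\gamma(x_\gamma)$, ordered by increasing height, right multiplication by $X_\gamma(c)$ replaces $x_\gamma$ by $x_\gamma+c$, leaves every $x_\beta$ with $\beta<\gamma$ in the ordering unchanged, and changes coordinates after $\gamma$ by polynomials in $c$ and the $x$'s. To see this, move $X_\gamma(c)$ leftward through the factors of larger height using the Chevalley commutator formula. The commutators only produce root groups of height strictly larger than $\operatorname{ht}(\gamma)$, and these can be reordered among themselves again with only higher-height error terms. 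I expect this bookkeeping to be the main technical obstacle. I would handle it by induction on height, working modulo the normal subgroups $N^{\ge h}=\prod_{\operatorname{ht}\beta\ge h}N_\beta$.

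\emph{Existence.} Process the roots $\gamma$ in the fixed order. Since $f_\gamma$ only involves earlier coordinates, which are already fixed, choose $c\in\mathcal O$ with $x_\gamma+c+f_\gamma\in\mathcal R$; this is possible because $\mathcal R$ is a section of $F/\mathcal O$.

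\emph{Uniqueness.} Suppose $\mathbf x\ne\mathbf x'$ are both in normal form and $n(\mathbf x')\in n(\mathbf x)N_0$. Let $\gamma$ be the first coordinate where they differ. Writing $n(\mathbf x)^{-1}n(\mathbf x')\in N_0$ with the same triangular structure, one gets $x'_\gamma-x_\gamma\in\mathcal O$. Since $f_\gamma$ agrees on $\mathbf x$ and $\mathbf x'$, it follows that $q_\gamma(\mathbf x)\equiv q_\gamma(\mathbf x')$ mod $\mathcal O$. Both values lie in $\mathcal R$, so they are equal, a contradiction.

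\textbf{Step (b): invariance under $\sim$.} The condition $\nu(x_{\alpha_i})=0$, i.e.\ $x_{\alpha_i}\in\mathcal O$, is $\sim$-invariant, because $n\mapsto(n_{\alpha_i})_i$ is a homomorphism $N\to F^m$ sending $N_0$ into $\mathcal O^m$. For the Plücker conditions, I would take the $e_{ij}$ to be an $\mathcal O$-basis of a $G(\mathcal O)$-stable admissible lattice in $V_{\omega_i}$ consisting of weight vectors; a Chevalley/Kostant $\mathbb Z$-form gives such a basis, and I would note that Corollary~\ref{cor:practical} may be read with this choice. Then for $n_0\in N_0$ we have $\pi_{\omega_i}(n_0)e_{ij}=\sum_k a_{jk}e_{ik}$ with $a_{jk}\in\mathcal O$, hence
\[p_{ij}(\mathbf x')=\sum_k a_{jk}\,p_{ik}(\mathbf x)\qquad\text{whenever } n(\mathbf x')=n(\mathbf x)n_0 .\]
Therefore $\min_j v(p_{ij})$ does not decrease under $\sim$, and by symmetry it is preserved. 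So $\mathcal T_{\mathbf 0,r}$ is a union of $\sim$-classes.

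\textbf{Step (c): conclusion.} By (a) and (b), each class in $\mathcal T_{\mathbf 0,r}/{\sim}$ has a unique normal-form representative $\mathbf x\in\mathcal T_{\mathbf 0,r}$. For this representative, $x_{\alpha_i}=q_{\alpha_i}(\mathbf x)\in\mathcal R\cap\mathcal O=\{0\}$. It also satisfies $q_\gamma(\mathbf x)\in\mathcal R$ for every $\gamma\in\Phi^+\setminus\Delta$ and $v(p_{ij}(\mathbf x))\ge-r_i$ for all $i,j$. Hence $\mathbf x\in S_r$. Distinct classes have distinct representatives, so this gives the injection $\mathcal T_{\mathbf 0,r}/{\sim}\hookrightarrow S_r$. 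Combining with the hypothesis on $|S_r|$ and applying Theorem~\ref{thm:perturbation} completes the proof.
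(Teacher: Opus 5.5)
Your Steps (a) and (c) match the paper's Lemma~\ref{lem:normal-form} and the closing argument of its proof of Corollary~\ref{cor:practical}. The real difference is Step (b). The paper keeps the weight basis $\{e_{ij}\}$ arbitrary and proves Lemma~\ref{lem:shift}. By compactness of $N_0$, the coefficients $a_{ijk}(n_0)$ have valuation $\ge -C_0$, so $\sim$ carries $\mathcal T_{\mathbf 0,r}$ into $\mathcal T_{\mathbf 0,r+C_0\mathbf 1}$ and the normal-form representative lies in $S_{r+C_0\mathbf 1}$. This gives $|\mathcal T_{\mathbf 0,r}/{\sim}|\le|S_{r+C_0\mathbf 1}|\le C q^{(1-\varepsilon)mC_0}\,q^{(1-\varepsilon)\sum_i r_i}$. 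You instead take a weight basis of an $N_0$-stable admissible lattice. That forces $C_0=0$, makes $\mathcal T_{\mathbf 0,r}$ a union of classes, and gives the cleaner $|\mathcal T_{\mathbf 0,r}/{\sim}|\le|S_r|$. The cost is that you need the Kostant $\mathbb Z$-form and its compatibility with the $X_\gamma$, whereas the paper needs only compactness and works for any basis.

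The one thing you must add is a justification for ``reading the corollary with this choice''. The hypothesis concerns the $S_r$ built from the fixed, possibly non-integral, weight basis. For such a basis your Step (b) need not hold, so as written you have proved the corollary only for your own basis. The repair is a short comparison of bases. If $e'_{ij}=\sum_k g_{jk}e_{ik}$ with $(g_{jk})$ invertible, then $p_{ik}=\sum_j h_{kj}p'_{ij}$, where $(h_{kj})$ is the inverse matrix. Hence $S'_r\subseteq S_{r+C\mathbf 1}$ with $C=\max\{0,-\min_{k,j} v(h_{kj})\}$, and a rescaling of $\ell_i$ is handled the same way. So the hypothesis for the given basis yields $|S'_r|\ll q^{(1-\varepsilon)\sum_i r_i}$. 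Theorem~\ref{thm:perturbation}, whose proof is valid for any weight basis, then finishes the argument. This is the same uniform constant shift as in Lemma~\ref{lem:shift}, applied to bases rather than to $\sim$-classes. Your route therefore does not avoid the shift; it only moves it.
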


The proof of the corollary rests on the two lemmas below: the first provides a canonical representative of each class of $\sim$, and the second shows that such a representative satisfies the Pl\"ucker conditions up to a constant shift of the exponents.

Write $\mathbf x n_0$ for the unique element of $F^{\Phi^+}$ with
$$n(\mathbf x n_0)=n(\mathbf x)n_0\qquad(\mathbf x\in F^{\Phi^+},\ n_0\in N_0);$$
thus $\mathbf x\mapsto\mathbf x n_0$ is the right action of $N_0$ on $F^{\Phi^+}$ obtained from the bijection $\mathbf x\leftrightarrow n(\mathbf x)$.

\begin{lem}\label{lem:normal-form}
With $\mathcal R$ and $q_\gamma$ as above, put
$$B_{\mathcal R}:=\left\{\mathbf x=(x_\gamma)_\gamma\in F^{\Phi^+}\ \Big|\ q_\gamma(\mathbf x)\in\mathcal R\text{ for all }\gamma\in\Phi^+\right\}.$$
Then the map
$$B_{\mathcal R}\longrightarrow N/N_0,\qquad \mathbf x\longmapsto n(\mathbf x)N_0,$$
is a bijection. Equivalently, every equivalence class of $\sim$ in $F^{\Phi^+}$ meets $B_{\mathcal R}$ in exactly one point.
\end{lem}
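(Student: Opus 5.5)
The plan is to show that every right $N_0$-coset $n(\mathbf x)N_0$ contains exactly one point of $B_{\mathcal R}$. I would do this by adjusting the coordinates one positive root at a time, in increasing height, using the ordering $\gamma_1,\dots,\gamma_{|\Phi^+|}$ fixed in the introduction. The key structural input is a triangularity property of the right action of $N_0$ on $F^{\Phi^+}$, which I would establish first. For $s\in\mathcal O$ and the root subgroup element $X_\gamma(s)\in N_0$, the product $n(\mathbf x)X_\gamma(s)$ has coordinates
$$
(\mathbf x X_\gamma(s))_\beta=x_\beta \quad(\beta<\gamma \text{ in height, or }\beta\neq\gamma\text{ of height}\le \operatorname{ht}\gamma),\qquad (\mathbf x X_\gamma(s))_\gamma=x_\gamma+s,
$$
and the coordinates of larger height change by polynomials in $\mathbf x$ and $s$ with $\mathcal O$-coefficients. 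To see this, move $X_\gamma(s)$ leftward into position using the Chevalley commutator formula \cite[Theorem 5.2.2]{Car72}. Every commutator $[X_\beta(a),X_\gamma(s)]$ produces only factors $X_{i\beta+j\gamma}$, which have strictly larger height, and the structure constants are integers. This is exactly why the normal-form map is well behaved with respect to the filtration by height.

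For existence, I would argue by induction on the height ordering. Suppose $\mathbf x$ has already been modified within its class so that $q_{\gamma_k}(\mathbf x)\in\mathcal R$ for all $k<j$. I would then right-multiply by $X_{\gamma_j}(s)$ with $s\in\mathcal O$. By the triangularity property, this leaves every $x_\beta$ with $\beta<\gamma_j$ unchanged. Since $f_{\gamma_j}$ depends only on these unchanged coordinates, the new value of $q_{\gamma_j}$ is $q_{\gamma_j}(\mathbf x)+s$. Because $\mathcal R$ is a section of $F/\mathcal O$, there is a unique $s\in\mathcal O$ with $q_{\gamma_j}(\mathbf x)+s\in\mathcal R$. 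The coordinates that this move disturbs all have larger height, so they are handled at later steps. The same triangularity also guarantees that the conditions $q_{\gamma_k}\in\mathcal R$ for $k<j$ are preserved, since those depend only on coordinates indexed by $\beta\le\gamma_k$. After $|\Phi^+|$ steps the resulting point lies in $B_{\mathcal R}$.

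For uniqueness, suppose $\mathbf x,\mathbf x'\in B_{\mathcal R}$ with $n(\mathbf x')=n(\mathbf x)n_0$, and write $n_0=\prod_\gamma X_\gamma(s_\gamma)$ with $s_\gamma\in\mathcal O$. Using the triangularity property again, I would show by induction along the height ordering that $x'_{\gamma_j}=x_{\gamma_j}+s_{\gamma_j}+h_j$. Here $h_j$ is a polynomial in $x_\beta$ and $s_\beta$ for $\beta<\gamma_j$, and it vanishes once $s_\beta=0$ for all $\beta<\gamma_j$.

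The induction then runs as follows. If $s_\beta=0$ for all $\beta<\gamma_j$, then $x'_\beta=x_\beta$ for those $\beta$, and so $f_{\gamma_j}(\mathbf x')=f_{\gamma_j}(\mathbf x)$. It follows that $q_{\gamma_j}(\mathbf x')-q_{\gamma_j}(\mathbf x)=s_{\gamma_j}\in\mathcal O$. Both values lie in $\mathcal R$, and $\mathcal R$ contains exactly one representative of each $\mathcal O$-coset, so $s_{\gamma_j}=0$. Hence $n_0=1$ and $\mathbf x=\mathbf x'$.

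The main obstacle I expect is bookkeeping with the chosen ordering of roots of equal height. Roots of equal height commute modulo higher-height terms, so reordering them introduces only higher-height corrections, and I would check this carefully. Since $f_\gamma$ only involves roots lower in the fixed ordering, the argument should go through with the ordering as given.
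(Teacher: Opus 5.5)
Your proposal is correct and follows essentially the same route as the paper. Like the paper, you right-multiply by $X_\gamma(t)$ with $t\in\mathcal O$ in order of increasing height, using that this leaves every other coordinate of height $\le\operatorname{ht}(\gamma)$ unchanged and shifts $q_\gamma$ by exactly $t$, and you prove uniqueness by looking at the first root where the two elements of $N_0$ differ (your side remark about $\mathcal O$-integrality of the higher-height corrections is never used and can be dropped).
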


\begin{proof}
We show that for every $\mathbf x\in F^{\Phi^+}$ there is exactly one $n_0\in N_0$ with $\mathbf x n_0\in B_{\mathcal R}$. Fix $\mathbf x$ and determine $n_0=\prod_{\gamma\in\Phi^+}X_\gamma(t_\gamma)$ by choosing the coordinates $t_\gamma$ one height at a time, in increasing height; we shall see that each choice is forced.

Two elementary facts are used. First, for $\gamma\in\Phi^+$ and $t\in F$, the point $\mathbf x X_\gamma(t)$ has $\gamma$-coordinate $x_\gamma+t$ and agrees with $\mathbf x$ in every other coordinate of height $\le\mathrm{ht}(\gamma)$.
Second, consequently, if $\mathbf y=\mathbf x n_0^-$ with $n_0^-=\prod_{\mathrm{ht}(\delta)<\mathrm{ht}(\gamma)}X_\delta(t_\delta)$, then
$$q_\gamma\bigl(\mathbf y X_\gamma(t)\bigr)=q_\gamma(\mathbf y)+t ,$$
because the coordinates occurring in $f_\gamma$ have height $<\mathrm{ht}(\gamma)$ and are therefore unchanged, while the $\gamma$-coordinate gains $t$.

Now suppose that $t_\delta$ has been chosen for all $\delta$ of height $<\mathrm{ht}(\gamma)$, and write $\mathbf y=\mathbf x n_0^-$ as above. By the second fact, the requirement $q_\gamma(\mathbf y X_\gamma(t))\in\mathcal R$ reads
$$q_\gamma(\mathbf y)+t\in\mathcal R ;$$
as $t$ ranges over $\mathcal O$, the left-hand side ranges over the coset $q_\gamma(\mathbf y)+\mathcal O$, which the section $\mathcal R$ meets in exactly one point. Hence this requirement determines a unique $t\in\mathcal O$, which we take as $t_\gamma$; by the first fact, this choice leaves unchanged the conditions already imposed at smaller heights.

Inducting over the height we obtain $n_0\in N_0$ with $q_\gamma(\mathbf x n_0)\in\mathcal R$ for every $\gamma$, that is, $\mathbf x n_0\in B_{\mathcal R}$. For uniqueness, let $n_0'$ be a second such element and let $\gamma$ be of minimal height with $t'_\gamma\neq t_\gamma$. Then $\mathbf x n_0$ and $\mathbf x n_0'$ agree in every coordinate of height $<\mathrm{ht}(\gamma)$, while their $\gamma$-coordinates differ by $t'_\gamma-t_\gamma\neq0$; since $f_\gamma$ involves only coordinates of smaller height, the two numbers $q_\gamma(\mathbf x n_0)$ and $q_\gamma(\mathbf x n_0')$ both lie in $\mathcal R$ and differ by $t'_\gamma-t_\gamma\in\mathcal O$, contradicting that $\mathcal R$ meets each coset of $\mathcal O$ exactly once. Hence $n_0$ is unique, and $B_{\mathcal R}\to N/N_0$ is bijective.
\end{proof}

\begin{lem}\label{lem:shift}
Fix a weight basis $\{e_{ij}\}$ of $V_{\omega_i}$ as in the definition of $p_{ij}$, and for $n_0\in N_0$ write $\pi_{\omega_i}(n_0)e_{ij}=\sum_k a_{ijk}(n_0)e_{ik}$. Put
$$C_0:=\max_{i,j,k}\Bigl(0,\,-\min_{n_0\in N_0}v\bigl(a_{ijk}(n_0)\bigr)\Bigr)<\infty .$$
If $y\in[-\ell,0]^m$, $r\in\mathbb Z_{\ge0}^m$, $\mathbf x\in\mathcal T_{y,r}$ and $\mathbf x\sim\mathbf x'$, then $\mathbf x'\in\mathcal T_{y,\,r+C_0\mathbf 1}$.
\end{lem}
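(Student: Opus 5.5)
The plan is to check the two defining conditions of $\mathcal T_{y,\,r+C_0\mathbf 1}$ for $\mathbf x'$ separately. Since $\mathbf x\sim\mathbf x'$, we have $n(\mathbf x')=n(\mathbf x)n_0$ for some $n_0\in N_0$. Before anything else, note that $C_0$ is finite: each $a_{ijk}$ is a regular function on $N$, hence a polynomial in the coordinates $t_\gamma$ of $n_0=\prod_\gamma X_\gamma(t_\gamma)$. Since $N_0$ corresponds to $t_\gamma\in\mathcal O$, which is compact, the valuations of $a_{ijk}$ on $N_0$ are bounded below. In fact, with the $\mathcal O$-model and an integral weight basis one expects $C_0=0$, but we do not need this.

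\emph{Plücker condition.} By definition, $p_{ij}(\mathbf x')=\langle\ell_i,\pi_{\omega_i}(\dot w_0 n(\mathbf x)n_0)e_{ij}\rangle$. Expanding $\pi_{\omega_i}(n_0)e_{ij}=\sum_k a_{ijk}(n_0)e_{ik}$ gives
$$p_{ij}(\mathbf x')=\sum_k a_{ijk}(n_0)\,p_{ik}(\mathbf x).$$
The ultrametric inequality then yields
$$v(p_{ij}(\mathbf x'))\ge\min_k\bigl(v(a_{ijk}(n_0))+v(p_{ik}(\mathbf x))\bigr)\ge -C_0-r_i,$$
using $v(p_{ik}(\mathbf x))\ge -r_i$, which holds because $\mathbf x\in\mathcal T_{y,r}$.

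\emph{Simple-root coordinates.} The goal is $\nu(x'_{\alpha_i})=\nu(x_{\alpha_i})=y_i$. The map $N\to N/[N,N]\cong\prod_{\alpha\in\Delta}\mathbb G_a$ is a homomorphism, and it sends $n(\mathbf x)$ to $(x_{\alpha_i})_i$ because every factor $X_\gamma$ with $\gamma$ non-simple lies in $[N,N]$. It follows that $x'_{\alpha_i}=x_{\alpha_i}+(n_0)_{\alpha_i}$ with $(n_0)_{\alpha_i}\in\mathcal O$. Adding an element of $\mathcal O$ does not change $\nu=\min\{v,0\}$: if $v(x)<0$ the valuation is unchanged, and if $v(x)\ge0$ the sum stays in $\mathcal O$.

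The only step needing care is the additivity of the simple-root coordinates under right multiplication by $N_0$, which is the abelianization argument above (this fact was already used implicitly in Lemma~\ref{lem:X-fibre-bound}). Everything else is a direct expansion.
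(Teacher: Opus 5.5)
Your proposal is correct and follows the paper's proof essentially step for step. It uses the same expansion $p_{ij}(\mathbf x')=\sum_k a_{ijk}(n_0)\,p_{ik}(\mathbf x)$ with the ultrametric inequality, the same compactness argument for $C_0<\infty$, and the same observation that $x'_{\alpha_i}=x_{\alpha_i}+(n_0)_{\alpha_i}$ with $(n_0)_{\alpha_i}\in\mathcal O$ leaves $\nu$ unchanged. The one addition is that you justify this additivity by passing to the abelian quotient of $N$ by the non-simple root subgroups, which the paper simply asserts.
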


\begin{proof}
Since $\mathbf x\sim\mathbf x'$, there is $n_0\in N_0$ with $n(\mathbf x')=n(\mathbf x)n_0$.

We first check that $C_0$ is finite. For each $i,j,k$, the function
$$N_0\longrightarrow F,\qquad n_0\longmapsto a_{ijk}(n_0)$$
is continuous. Since $N_0$ is compact, its image is bounded in $F$.
There are only finitely many triples $(i,j,k)$, so there exists an
integer $M\ge0$ such that
$$v(a_{ijk}(n_0))\ge-M\qquad\text{for every }n_0\in N_0\text{ and every }i,j,k.$$
Thus $C_0<\infty$.

Now fix $i,j$. Using $n(\mathbf x')=n(\mathbf x)n_0$ and $\pi_{\omega_i}(n_0)e_{ij}=\sum_ka_{ijk}(n_0)e_{ik}$, we obtain
$$p_{ij}(\mathbf x')
=\langle\ell_i,\pi_{\omega_i}(\dot w_0n(\mathbf x)n_0)e_{ij}\rangle
=\sum_k a_{ijk}(n_0)\,\langle\ell_i,\pi_{\omega_i}(\dot w_0n(\mathbf x))e_{ik}\rangle
=\sum_k a_{ijk}(n_0)\,p_{ik}(\mathbf x).$$
Since $\mathbf x\in\mathcal T_{y,r}$ we have $v(p_{ik}(\mathbf x))\ge-r_i$ for every $k$, while $v(a_{ijk}(n_0))\ge-C_0$; therefore
$$v\bigl(p_{ij}(\mathbf x')\bigr)\ \ge\ -r_i-C_0 .$$

As in the proof of Lemma~\ref{lem:X-fibre-bound}, the $\alpha_i$-coordinate of $n(\mathbf x)n_0$ equals $x_{\alpha_i}+(n_0)_{\alpha_i}$ with $(n_0)_{\alpha_i}\in\mathcal O$; hence $\nu(x'_{\alpha_i})=\nu(x_{\alpha_i})=y_i$. Thus $\mathbf x'$ satisfies $\nu(x'_{\alpha_i})=y_i$ and $v(p_{ij}(\mathbf x'))\ge-r_i-C_0$ for all $i,j$, that is, $\mathbf x'\in\mathcal T_{y,\,r+C_0\mathbf 1}$.
\end{proof}

\begin{proof}[Proof of Corollary~\ref{cor:practical}]
Let $C_0$ be the constant of Lemma~\ref{lem:shift}, and let $\mathbf x\in\mathcal T_{\mathbf 0,r}$. By Lemma~\ref{lem:normal-form} the map
$$B_{\mathcal R}\longrightarrow N/N_0,\qquad \mathbf y\longmapsto n(\mathbf y)N_0 ,$$
is a bijection, so there is a unique $\mathbf x_0\in B_{\mathcal R}$ with $n(\mathbf x_0)N_0=n(\mathbf x)N_0$, that is, $\mathbf x_0\sim\mathbf x$. Since $\mathbf x\in\mathcal T_{\mathbf 0,r}$, Lemma~\ref{lem:shift} gives
$$\mathbf x_0\in\mathcal T_{\mathbf 0,\,r+C_0\mathbf 1},$$
that is, $\nu((\mathbf x_0)_{\alpha_i})=0$ and $v(p_{ij}(\mathbf x_0))\ge-r_i-C_0$ for all $i,j$. From $\nu((\mathbf x_0)_{\alpha_i})=0$ we get $(\mathbf x_0)_{\alpha_i}\in\mathcal O$. Since $(\mathbf x_0)_{\alpha_i}=q_{\alpha_i}(\mathbf x_0)\in\mathcal R$ and $\mathcal R$ contains $0$, this forces $(\mathbf x_0)_{\alpha_i}=0$.
Hence $\mathbf x_0\in S_{r+C_0\mathbf 1}$. The point $\mathbf x_0$ depends only on the class of $\mathbf x$, and distinct classes have distinct points in $B_{\mathcal R}$; therefore
$$|\mathcal T_{\mathbf 0,r}/{\sim}|\le|S_{r+C_0\mathbf 1}| .$$
Applying the hypothesis to the parameter $r+C_0\mathbf 1\in\mathbb Z_{\ge0}^m$ gives
$$|S_{r+C_0\mathbf 1}|\le C\,q^{(1-\varepsilon)\sum_{i=1}^m(r_i+C_{0})}
=\Bigl(C\,q^{(1-\varepsilon)mC_{0}}\Bigr)q^{(1-\varepsilon)\sum_{i=1}^mr_i},$$
so that $|\mathcal T_{\mathbf 0,r}/{\sim}|\le C_1\,q^{(1-\varepsilon)\sum_{i=1}^mr_i}$ for all $r\in\mathbb Z_{\ge0}^m$, with $C_1:=C\,q^{(1-\varepsilon)mC_0}$. The assertion now follows from Theorem~\ref{thm:perturbation}.
\end{proof}

\subsection{Finiteness of Pl\"ucker-constrained sets}\label{sect:finite}

We prove the finiteness of $S_r$ and Lemma~\ref{lem:ell-growth}.
Both follow from a bound on the root coordinates. Put
$$\CP_r:=\bigl\{\mathbf x\in F^{\Phi^+}:\ v(p_{ij}(\mathbf x))\ge -r_i\
\text{for all }i,j\bigr\}.$$
Then the projection $\operatorname{proj}_\gamma:\CP_r\to F$,
$\mathbf x\mapsto x_\gamma$, has image contained in $\fp^{-k_\gamma}$ with
$k_\gamma$ bounded linearly in the $r_i$. 

\begin{prop}\label{prop:arale}
For every choice of the $\ell_i$ and of $A$-weight bases $\{e_{ij}\}_j$ of the
$V_{\omega_i}$ there is a constant $C\ge0$, depending only on the $\ell_i$ and the
$e_{ij}$, such that for every $r\in\mathbb Z_{\ge0}^m$ and every
$\gamma\in\Phi^+$,
$$\operatorname{proj}_\gamma(\CP_r)\subset\fp^{-k_\gamma},\qquad
k_\gamma:=\operatorname{ht}(\gamma)\Bigl(C+\max_{1\le i\le m}r_i\Bigr).$$
\end{prop}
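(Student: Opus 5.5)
We argue by induction on $\operatorname{ht}(\gamma)$, using Lemma~\ref{lem:plucker-monomial}. For each $\gamma\in\Phi^+$ it provides indices $i=i(\gamma)$, $j=j(\gamma)$ with
$$p_{ij}(\mathbf x)=c_\gamma x_\gamma+g_\gamma(\mathbf x),\qquad c_\gamma\in F^\times,\quad g_\gamma\in F[x_\beta:\beta<\gamma],$$
and $p_{ij}$ is $\gamma$-homogeneous. Hence every monomial $\prod_\beta x_\beta^{k_\beta}$ of $g_\gamma$ satisfies $\sum_\beta k_\beta\beta=\gamma$ with only $\beta<\gamma$ occurring. Taking heights gives $\sum_\beta k_\beta\operatorname{ht}(\beta)=\operatorname{ht}(\gamma)$. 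This additivity of height along monomials is what makes the linear-in-height bound close up.

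Put $R:=\max_i r_i$. Let $C_1\ge0$ be such that $v(c_\gamma)\le C_1$ for all $\gamma$ and every nonzero coefficient $b$ of every $g_\gamma$ satisfies $v(b)\ge -C_1$. These are finitely many constants depending only on the $\ell_i$ and $e_{ij}$. We claim by induction that $v(x_\gamma)\ge-\operatorname{ht}(\gamma)(C+R)$ for $\mathbf x\in\CP_r$, with $C:=2C_1$, say.

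For the base case, a simple root $\gamma=\alpha_k$ has no smaller positive roots. Then $g_\gamma$ is a constant that is $\gamma$-homogeneous, hence $g_\gamma=0$. So $v(x_{\alpha_k})=v(p_{ij}(\mathbf x))-v(c_\gamma)\ge -R-C_1$.

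For the inductive step, take a monomial $b\prod x_\beta^{k_\beta}$ of $g_\gamma$. By induction,
$$v\Bigl(b\prod_\beta x_\beta^{k_\beta}\Bigr)\ge -C_1-\sum_\beta k_\beta\operatorname{ht}(\beta)(C+R)=-C_1-\operatorname{ht}(\gamma)(C+R).$$
The ultrametric inequality then gives
$$v(c_\gamma x_\gamma)\ge\min\bigl(-R,\,-C_1-\operatorname{ht}(\gamma)(C+R)\bigr),$$
so $v(x_\gamma)\ge -2C_1-\operatorname{ht}(\gamma)(C+R)$.

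This loses an additive $2C_1$ at each step. That loss is the main obstacle: the inductive hypothesis as stated does not quite reproduce itself. We fix it by proving the stronger statement
$$v(x_\gamma)\ge -\operatorname{ht}(\gamma)(C+R)+C',$$
a bound of the form $\operatorname{ht}(\gamma)(C+R)-C'$ with a slack constant $C'$. Alternatively, we first rescale the variables: the substitution $x_\beta\mapsto\varpi^{-a\operatorname{ht}(\beta)}x_\beta$ preserves $\gamma$-homogeneity. In either form the extra slack absorbs the coefficient valuations, because a monomial with $\sum_\beta k_\beta\ge2$ gains at least $C'$ from the sum of the individual slacks. Concretely, set
$$D_\gamma:=\operatorname{ht}(\gamma)(C+R)-C'.$$
A monomial with $\sum_\beta k_\beta=s\ge2$ has valuation at least $-C_1-\operatorname{ht}(\gamma)(C+R)+sC'$. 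Choosing $C'=C_1+C_1$ and $C\ge C'+2C_1$ makes all terms at least $-D_\gamma-C_1$. Then $v(x_\gamma)\ge -D_\gamma-2C_1+\ldots$, and the constants close. This is routine bookkeeping once $C'$ is chosen.

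A linear monomial $b\,x_\beta$ would force $\beta=\gamma$, which is excluded, so every monomial of $g_\gamma$ has degree $s\ge2$, and $s\ge2$ is always available. The base case needs $-R-C_1\ge -(C+R)+C'$, i.e. $C\ge C'+C_1$, which holds. Finally, $\operatorname{proj}_\gamma(\CP_r)\subset\fp^{-k_\gamma}$ with $k_\gamma\le\operatorname{ht}(\gamma)(C+R)$. This proves the proposition. Lemma~\ref{lem:ell-growth} and the finiteness of $S_r$ follow, since $\mathcal R\cap\fp^{-k}$ is finite.
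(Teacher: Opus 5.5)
Your proposal is correct and is essentially the paper's proof. The paper also inducts on height via Lemma~\ref{lem:plucker-monomial}, uses $\sum_\beta k_\beta\ge2$ together with $\sum_\beta k_\beta\operatorname{ht}(\beta)=\operatorname{ht}(\gamma)$, and proves exactly your strengthened bound with slack $C'=2C_1$ and $C=3C_1$ (its $C_0$ is your $C_1$), namely $v(x_\gamma)\ge-\bigl((3\operatorname{ht}(\gamma)-2)C_1+\operatorname{ht}(\gamma)R\bigr)$.

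One correction to your final bookkeeping: with $C'=2C_1$ the monomials of $g_\gamma$ are bounded below by $-D_\gamma+C_1$, not merely by $-D_\gamma-C_1$ (which would not close). Subtracting $v(c_\gamma)\le C_1$ then gives exactly $v(x_\gamma)\ge-D_\gamma$, and $C\ge3C_1$ handles the branch coming from $v(p_{ij}(\mathbf x))\ge-R$.
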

\begin{proof}
Let $\mathbf x\in\CP_r$. By Lemma~\ref{lem:plucker-monomial}, for every
$\gamma\in\Phi^+$ there are indices $i=i(\gamma)$, $j=j(\gamma)$ such that
$$p_{i(\gamma),j(\gamma)}(\mathbf x)=c_\gamma x_\gamma+g_\gamma(\mathbf x),
\qquad c_\gamma\ne0,\qquad g_\gamma\in F[x_\beta:\beta<\gamma],$$
the polynomial $p_{i(\gamma),j(\gamma)}$ being $\gamma$-homogeneous. Write
$$g_\gamma=\sum_{\mathbf k}b_{\gamma,\mathbf k}\mathbf x^{\mathbf k};$$
since $g_\gamma$ is $\gamma$-homogeneous, every monomial satisfies
$\sum_{\beta\in\Phi^+}k_\beta\beta=\gamma$. Put
$$C_0:=\max\Bigl\{0,\ \max_{\gamma\in\Phi^+}v(c_\gamma),\
\max_{\gamma,\mathbf k}\bigl(-v(b_{\gamma,\mathbf k})\bigr)\Bigr\}.$$
We claim that, for every $\gamma\in\Phi^+$,
$$\operatorname{proj}_\gamma(\CP_r)\subset\fp^{-l_\gamma},\qquad
l_\gamma:=(3\operatorname{ht}(\gamma)-2)C_0+\operatorname{ht}(\gamma)r_\infty,
\qquad r_\infty:=\max_{1\le k\le m}r_k .$$

We prove the claim by induction on $\operatorname{ht}(\gamma)$. If
$\operatorname{ht}(\gamma)=1$, then $g_\gamma=0$, so the condition
$v(p_{i(\gamma),j(\gamma)}(\mathbf x))\ge -r_{i(\gamma)}$ reads
$v(c_\gamma x_\gamma)\ge -r_{i(\gamma)}$; hence
$v(x_\gamma)\ge -r_\infty-C_0$, and the claim holds.

Now let $\operatorname{ht}(\gamma)>1$ and assume the claim for every
$\beta\in\Phi^+$ with $\operatorname{ht}(\beta)<\operatorname{ht}(\gamma)$. Then
$$v(x_\gamma)\ \ge\ \min\{-r_\infty,\,v(g_\gamma(\mathbf x))\}-v(c_\gamma),$$
and
$$\begin{aligned}
v(g_\gamma(\mathbf x))
&\ge \min_{\mathbf k}\Bigl\{v(b_{\gamma,\mathbf k})
   +\sum_{\beta<\gamma}k_\beta\,v(x_\beta)\Bigr\}\\
&\ge \min_{\mathbf k}\Bigl\{-C_0-\sum_{\beta<\gamma}k_\beta
   \bigl((3\operatorname{ht}(\beta)-2)C_0+\operatorname{ht}(\beta)r_\infty\bigr)\Bigr\}\\
&\ge \min_{\mathbf k}\Bigl\{3C_0-\sum_{\beta<\gamma}k_\beta
   \operatorname{ht}(\beta)(3C_0+r_\infty)\Bigr\}\\
&\ge 3C_0-\operatorname{ht}(\gamma)(3C_0+r_\infty).
\end{aligned}$$
Here the third inequality uses $\sum_{\beta<\gamma}k_\beta\ge2$, which follows
from $\sum k_\beta\beta=\gamma$ together with $\beta<\gamma$, and the last one
uses $\sum_\beta k_\beta\operatorname{ht}(\beta)=\operatorname{ht}(\gamma)$.
Therefore
$$v(x_\gamma)\ \ge\ 2C_0-\operatorname{ht}(\gamma)(3C_0+r_\infty)\ \ge\ -l_\gamma ,$$
which proves the claim. Taking $C=3C_0$ yields
$\operatorname{proj}_\gamma(\CP_r)\subset\fp^{-k_\gamma}$ with
$k_\gamma=\operatorname{ht}(\gamma)(C+r_\infty)$, as asserted.
\end{proof}

\begin{cor}\label{cor:finiteness}
There are constants $C_0,C_1>0$, depending only on $G$ and the chosen Pl\"ucker
coordinates, such that for every $r\in\mathbb Z_{\ge0}^m$,
$$|S_r|\le C_0\,q^{C_1\max_{1\le i\le m}r_i}.$$
In particular, $S_r$ is finite.
\end{cor}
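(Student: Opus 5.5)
The plan is to deduce the corollary directly from Proposition~\ref{prop:arale} by counting how many values each coordinate can take. Since $S_r\subset\CP_r$, every $\mathbf x\in S_r$ satisfies $x_\gamma\in\fp^{-k_\gamma}$ with $k_\gamma=\operatorname{ht}(\gamma)(C+r_\infty)$, where $r_\infty=\max_i r_i$. The simple-root coordinates are fixed at $0$. The remaining coordinates are constrained by $q_\gamma(\mathbf x)\in\CR$, which pins down each $x_\gamma$ modulo $\CO$ once the lower coordinates are known. So the task is to bound the number of admissible values of each $x_\gamma$, one height at a time.

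Process the non-simple roots $\gamma$ in increasing height and suppose the coordinates $x_\beta$ with $\beta<\gamma$ (more precisely, of smaller height) have already been fixed. Then $f_\gamma(\mathbf x)$ is a fixed number. The condition $q_\gamma(\mathbf x)=x_\gamma+f_\gamma\in\CR$ means $x_\gamma\in\CR-f_\gamma$, and this set meets each coset of $\CO$ in exactly one point. Combined with $x_\gamma\in\fp^{-k_\gamma}$, the number of choices for $x_\gamma$ is at most the number of cosets of $\CO$ inside $\fp^{-k_\gamma}$, namely $q^{k_\gamma}$.

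One point needs care here: ordering by height with $\beta<\gamma$ in the partial order is consistent, because $\beta<\gamma$ implies $\operatorname{ht}(\beta)<\operatorname{ht}(\gamma)$. Hence $f_\gamma$ depends only on coordinates that have already been chosen.

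Multiplying over $\gamma\in\Phi^+\setminus\Delta$ gives
$$|S_r|\le\prod_{\gamma\in\Phi^+\setminus\Delta}q^{\operatorname{ht}(\gamma)(C+r_\infty)}=q^{CH}\,q^{H r_\infty},\qquad H:=\sum_{\gamma\in\Phi^+\setminus\Delta}\operatorname{ht}(\gamma).$$
Taking $C_0=q^{CH}$ and $C_1=\max(H,1)$ yields the stated bound. Finiteness of $S_r$ is then immediate.

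There is no real obstacle; the only subtlety is the fibrewise count above, which uses the section property of $\CR$. The same Proposition~\ref{prop:arale} also gives Lemma~\ref{lem:ell-growth}. For $x\in X_d(\dot w_0t)$, the representative of $u'(x)$ lies in $\CP_{r+C'}$ after using integrality of the Plücker coordinates on $K_d\subset K_0$, as in Lemma~\ref{lem:X-fibre-bound}. The symmetric statement for $u$ follows by applying the same argument to $g^{-1}$. This gives $\ell$ linear in $\sum r_i$.
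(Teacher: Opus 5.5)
Your proposal is correct and follows the paper's proof essentially verbatim. Proposition~\ref{prop:arale} confines each $x_\gamma$ to $\fp^{-k_\gamma}$, and the section property of $\CR$, applied in increasing height, leaves at most $q^{k_\gamma}$ choices for each non-simple root. The only cosmetic difference is your exponent $H=\sum_{\gamma\in\Phi^+\setminus\Delta}\operatorname{ht}(\gamma)$ (with $C_1=\max(H,1)$ to keep $C_1>0$), where the paper uses the cruder $|\Phi^+|\max_\gamma\operatorname{ht}(\gamma)$.
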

\begin{proof}
By Proposition~\ref{prop:arale} there is a constant $C>0$, depending only on the
chosen $\ell_i$ and $e_{ij}$, such that for every $\gamma\in\Phi^+$
$$\operatorname{proj}_\gamma(S_r)\subset\fp^{-k_\gamma},\qquad
k_\gamma=\operatorname{ht}(\gamma)\Bigl(C+\max_{1\le i\le m}r_i\Bigr),$$
since $S_r\subset\CP_r$. Since $x_{\alpha_i}=0$ for every simple root
$\alpha_i$, a point of $S_r$ is determined by its coordinates $x_\gamma$ with
$\gamma\in\Phi^+\setminus\Delta$. Fix such a $\gamma$ and suppose that the
coordinates $x_\beta$ with $\beta<\gamma$ are known; then $f_\gamma(\mathbf x)$ is
known, and $q_\gamma(\mathbf x)=x_\gamma+f_\gamma(\mathbf x)\in\CR$ means that
$x_\gamma$ lies in the set $\CR-f_\gamma(\mathbf x)$, which meets every coset of
$\CO$ in exactly one point. Together with
$x_\gamma\in\fp^{-k_\gamma}$,
this leaves at most $q^{k_\gamma}$ possibilities for $x_\gamma$. Inducting over
the height of $\gamma$ (in increasing order), we obtain
$$|S_r|\le\prod_{\gamma\in\Phi^+\setminus\Delta}q^{k_\gamma}
\le q^{|\Phi^+|H(C+r_\infty)},$$
where $H:=\max_{\gamma\in\Phi^+}\operatorname{ht}(\gamma)$ and
$r_\infty:=\max_{1\le i\le m}r_i$. Taking $C_1=|\Phi^+|H$ and
$C_0=q^{|\Phi^+|HC}$ gives the asserted bound.
\end{proof}

Finally, we give a proof of Lemma~\ref{lem:ell-growth}.
\begin{proof}[Proof of Lemma~\ref{lem:ell-growth}]
For every $x\in X_d(\dot w_0t)$ choose a representative $g\in K_d$ and write
$g=n_1\dot w_0t\,n_2$ with $n_1,n_2\in N$. Recall that the $\ell_i$ have been normalized so that
$$\left\langle\ell_i,\pi_{\omega_i}(k)e_{ij}\right\rangle
\in\mathcal O
\qquad
\text{for every }k\in K_0\text{ and every }i,j.$$
Since $g\in K_0$ and $\ell_i$ is fixed by $N$, we have, for all $i,j$,
$$p_{ij,t}(n_2)=\langle\ell_i,\pi_{\omega_i}(\dot w_0t\,n_2)e_{ij}\rangle
=\langle\ell_i,\pi_{\omega_i}(n_1^{-1}g)e_{ij}\rangle
=\langle\ell_i,\pi_{\omega_i}(g)e_{ij}\rangle\in\CO ,$$
so $v(p_{ij,t}(n_2))\ge0$. Since $p_{ij,t}=\omega_i(t)p_{ij}$ and
$v(\omega_i(t))=r_i$ (Corollary~\ref{cor:plucker-torus}), this gives
$v(p_{ij}(n_2))\ge -r_i$ for all $i,j$. Writing
$n_2=\prod_{\gamma\in\Phi^+}X_\gamma(z_\gamma)$, we conclude that
$(z_\gamma)_\gamma\in\CP_r$; hence, by Proposition~\ref{prop:arale},
$$v(z_\gamma)\ge -\operatorname{ht}(\gamma)\Bigl(C+\max_{1\le i\le m}r_i\Bigr)$$
for every $\gamma\in\Phi^+$. Enlarging $C$ if necessary, we may assume $C>d$. Set
$\ell:=C+\max_i r_i$. Then $\ell>d$ and
$v(z_\gamma)\ge-\ell\operatorname{ht}(\gamma)$ for all $\gamma$, so
$n_2\in N_{-\ell}$ by the definition of $N_{-\ell}$. Hence
$u'(x)\in N_{-\ell}/N_d$.

To bound $n_1$, apply the same argument to $g^{-1}$, using $\dot w_0^{-1}$ as the representative of the longest Weyl element. We have
$$g^{-1}=n_2^{-1}t^{-1}\dot w_0^{-1}n_1^{-1}=n_2^{-1}\dot w_0^{-1}t'n_1^{-1},\qquad t':=\dot w_0t^{-1}\dot w_0^{-1}\in A.$$
If $\lambda_t=\sum_{i=1}^m r_i\alpha_i^\vee$, then $w_0(\alpha_i^\vee)=-\alpha_{\sigma(i)}^\vee$ for a permutation $\sigma$ of the simple roots. Hence $\lambda_{t'}=w_0(-\lambda_t)$ has coefficients $r_i'=r_{\sigma^{-1}(i)}$, so that $\max_i r_i'=\max_i r_i$.

Applying the preceding coordinate-growth argument to $g^{-1}$, with $\dot w_0^{-1}$ in place of $\dot w_0$, gives $n_1^{-1}\in N_{-\ell}$ after enlarging the constant $C$ if necessary. Since $N_{-\ell}$ is a subgroup of $N$, it is stable under inversion, and hence $n_1\in N_{-\ell}$. Together with the estimate already proved for $n_2$, this yields $u(x)\in N_d\backslash N_{-\ell}$ and $u'(x)\in N_{-\ell}/N_d$.

Finally, taking $\ell=C+\max_i r_i$, we have $\ell>d$ and $\ell\le C(1+\sum_{i=1}^m r_i)$ after enlarging $C$ if necessary. This proves the lemma.
\end{proof}

\section{Lower rank cases}

We use the standard descriptions of the fundamental representations of the
classical and exceptional groups from \cite{FH91} throughout this section.

\subsection{\texorpdfstring{$\Sp_4$}{Sp4}}

Let $G=\Sp_4$, and let
$$J=\begin{pmatrix}
0 & 0 & 0 & 1 \\
0 & 0 & 1 & 0 \\
0 & -1 & 0 & 0 \\
-1 & 0 & 0 & 0
\end{pmatrix}.$$
Let $B=AN$ be the Borel subgroup of upper triangular matrices, with diagonal torus $A$ and unipotent radical $N$. Thus
$$A=\{\diag(a,b,b^{-1},a^{-1})\mid a,b\in F^\times\}.$$
Let $\Delta=\{\alpha,\beta\}$ be the set of simple roots, where $\alpha$ denotes the short root and $\beta$ the long root. The positive roots are
$$\Phi^+=\{\alpha,\beta,\alpha+\beta,2\alpha+\beta\}.$$
As characters of $A$, we have
$$\alpha(\diag(a,b,b^{-1},a^{-1}))=ab^{-1},\qquad \beta(\diag(a,b,b^{-1},a^{-1}))=b^2.$$
Hence the coroots are
$$\alpha^\vee(t)=\diag(t,t^{-1},t,t^{-1}),\qquad \beta^\vee(t)=\diag(1,t,t^{-1},1),$$
and the fundamental weights are
$$\omega_1(\diag(a,b,b^{-1},a^{-1}))=a,\qquad \omega_2(\diag(a,b,b^{-1},a^{-1}))=ab.$$

Let $V_1$ and $V_2$ be the fundamental representations with highest weights $\omega_1$ and $\omega_2$, respectively. Here $V_1$ is the $4$-dimensional standard representation, and $V_2$ is a $5$-dimensional representation of $\Sp_4$.

Let $e_1,e_2,e_3,e_4$ be the standard basis of $V_1$. Their weights are
$$\omega_1,\quad \omega_1-\alpha,\quad \alpha-\omega_1,\quad -\omega_1,$$
respectively. The highest weight vector of $V_1^*$ is $\ell_1:=e_4^\vee$. Hence we obtain four Pl\"ucker coordinates
$$x_j(g):=\langle g e_j,e_4^\vee\rangle=g_{4j},\qquad 1\le j\le 4.$$

For $V_2$, we use the decomposition
$$\bigwedge^2 V_1\cong V_2\oplus F\Omega,$$
where $\Omega=e_1\wedge e_4+e_2\wedge e_3$ is $G$-invariant. The highest weight vector of $V_2^*$ is $\ell_2:=e_3^\vee\wedge e_4^\vee$. Thus the matrix coefficients of $V_2$ are
$$y_{ij}(g):=\langle g\cdot(e_i\wedge e_j),\ell_2\rangle
=\det\begin{pmatrix}
g_{3i} & g_{3j} \\
g_{4i} & g_{4j}
\end{pmatrix}.$$
Since $\ell_2$ annihilates $\Omega$, we have
$$y_{14}+y_{23}=\langle g\cdot\Omega,\ell_2\rangle=0.$$
Therefore we obtain five linearly independent matrix coefficients from $V_2$, namely
$$\{y_{ij}(g):1\le i<j\le 4,\ (i,j)\neq(2,3)\}.$$

We now compute the polynomials $p_{ij}(\mathbf x):=\langle \ell_i,\pi_{\omega_i}(\dot w_0\,n(\mathbf x))e_{ij}\rangle$, where $\mathbf x=(x_\alpha,x_\beta,x_{\alpha+\beta},x_{2\alpha+\beta})=(u,v,w,z)$. A direct computation gives the root subgroup parametrizations
$$\begin{aligned}
X_\alpha(t)&=I+t(E_{12}-E_{34}),\\
X_\beta(t)&=I+tE_{23},\\
X_{\alpha+\beta}(t)&=I+t(E_{13}+E_{24}),\\
X_{2\alpha+\beta}(t)&=I+tE_{14}.
\end{aligned}$$
Thus
$$n(u,v,w,z)=X_\alpha(u)X_\beta(v)X_{\alpha+\beta}(w)X_{2\alpha+\beta}(z)
=\begin{pmatrix}
1 & u & uv+w & uw+z \\
0 & 1 & v & w \\
0 & 0 & 1 & -u \\
0 & 0 & 0 & 1
\end{pmatrix}.$$

Taking $\dot w_0=J\in N_G(A)(\mathcal O)$, and allowing reordering and individual multiplication by units in $\mathcal O^\times$, which leave the valuation conditions unchanged, we may take
$$\begin{aligned}
p_{11}&=1,\quad p_{12}=u,\quad p_{13}=uv+w,\quad p_{14}=uw+z,\\
p_{21}&=1,\quad p_{22}=v,\quad p_{23}=w,\quad p_{24}=z,\quad p_{25}=vz-w^2.
\end{aligned}$$

We apply Corollary~\ref{cor:practical} with the section $\mathcal R\subset F$ of $F/\mathcal O$ containing $0$ and with the trivial family $q_\gamma=x_\gamma$. Writing $r=(r_1,r_2)=(r,s)$, the Pl\"ucker-constrained set $S_r$ is
$$\begin{aligned}
S_r
&=\left\{(u,v,w,z)\ \middle|\ u=v=0,\quad w,z\in\mathcal R,\quad v(p_{1j})\ge-r,\quad v(p_{2j})\ge-s\right\}\\
& \stackrel{\sim}{=} \left\{(w,z)\in\mathcal R^2\ \middle|\ v(w),\,v(z)\ge-r,\quad v(w),\,v(z)\ge-s,\quad 2v(w)\ge-s\right\}\\
&=\left\{(w,z)\in\mathcal R^2\ \middle|\ v(w)\ge-\min\{r,s/2\},\quad v(z)\ge-\min\{r,s\}\right\}.
\end{aligned}$$

Finally, the number of cosets of $\mathcal O$ meeting $\{x:v(x)\ge-K\}$ is $q^K$, and $\mathcal R$ meets every coset of $\mathcal O$ in exactly one point; hence
$$|S_r|\le q^{\min\{r,s/2\}+\min\{r,s\}}\le q^{\frac34(r+s)}.$$

Suppose now that $p\neq 2$, so that $p\nmid\det(C_{\Sp_4})=2$. Then Corollary~\ref{cor:practical} shows that the Kloosterman sums for $\Sp_4$ satisfy a nontrivial bound.

\subsection{\texorpdfstring{$\SL_4$}{SL4}}

Let $G=\SL_4$, and let
$$
J=\begin{pmatrix}
0 & 0 & 0 & 1 \\
0 & 0 & -1 & 0 \\
0 & 1 & 0 & 0 \\
-1 & 0 & 0 & 0
\end{pmatrix}.
$$
Let $B=AN$ be the Borel subgroup of upper triangular matrices, with diagonal torus $A$ and unipotent radical $N$. Thus
$$A=\{\diag(a,b,c,d)\mid a,b,c,d\in F^\times,\ abcd=1\}.$$
Let $\Delta=\{\alpha_1,\alpha_2,\alpha_3\}$ be the set of simple roots. The positive roots are
$$\Phi^+=\{\alpha_1,\alpha_2,\alpha_3,\alpha_1+\alpha_2,\alpha_2+\alpha_3,\alpha_1+\alpha_2+\alpha_3\}.$$
As characters on $A$, we have
$$
\alpha_1(\diag(a,b,c,d))=ab^{-1},\qquad
\alpha_2(\diag(a,b,c,d))=bc^{-1},\qquad
\alpha_3(\diag(a,b,c,d))=cd^{-1}.
$$
Hence the coroots are
$$
\alpha_1^\vee(t)=\diag(t,t^{-1},1,1),\qquad
\alpha_2^\vee(t)=\diag(1,t,t^{-1},1),\qquad
\alpha_3^\vee(t)=\diag(1,1,t,t^{-1}),
$$
and the fundamental weights are
$$
\omega_1(\diag(a,b,c,d))=a,\qquad
\omega_2(\diag(a,b,c,d))=ab,\qquad
\omega_3(\diag(a,b,c,d))=abc.
$$

Let $V_1,V_2,V_3$ be the fundamental representations with highest weights $\omega_1,\omega_2,\omega_3$, respectively. Here $V_1$ is the $4$-dimensional standard representation, and
$$V_2=\bigwedge^2 V_1,\qquad V_3=\bigwedge^3 V_1.$$
Let $e_1,e_2,e_3,e_4$ be the standard basis of $V_1$. Their weights are
$$\omega_1,\quad \omega_1-\alpha_1,\quad \omega_1-\alpha_1-\alpha_2,\quad \omega_1-\alpha_1-\alpha_2-\alpha_3.$$
The highest weight vector of $V_1^*$ is $\ell_1:=e_4^\vee$. Hence we obtain four Pl\"ucker coordinates
$$x_j(g):=\langle ge_j,e_4^\vee\rangle=g_{4j},\qquad 1\le j\le 4.$$
For $V_2$, the highest weight vector of $V_2^*$ is $\ell_2:=e_3^\vee\wedge e_4^\vee$, and the matrix coefficients are
$$
y_{ij}(g):=\langle g\cdot(e_i\wedge e_j),\ell_2\rangle
=\det\begin{pmatrix}
g_{3i} & g_{3j} \\
g_{4i} & g_{4j}
\end{pmatrix},\qquad 1\le i<j\le 4.
$$
For $V_3$, the highest weight vector of $V_3^\vee$ is $\ell_3:=e_2^\vee\wedge e_3^\vee\wedge e_4^\vee$, and the matrix coefficients are
$$
z_{ijk}(g):=\langle g\cdot(e_i\wedge e_j\wedge e_k),\ell_3\rangle
=\det\begin{pmatrix}
g_{2i} & g_{2j} & g_{2k} \\
g_{3i} & g_{3j} & g_{3k} \\
g_{4i} & g_{4j} & g_{4k}
\end{pmatrix},\qquad 1\le i<j<k\le 4.
$$
For the computation below, it suffices to retain the following Pl\"ucker coordinates:
$$
x_2,\ x_3,\ x_4;\qquad
y_{13},\ y_{14},\ y_{23},\ y_{24},\ y_{34};\qquad
z_{124},\ z_{134},\ z_{234}.
$$

We first record the product parametrization. The root subgroup parametrizations are
$$
X_{\alpha_1}(t)=I+tE_{12},\quad
X_{\alpha_2}(t)=I+tE_{23},\quad
X_{\alpha_3}(t)=I+tE_{34},
$$
$$
X_{\alpha_1+\alpha_2}(t)=I+tE_{13},\quad
X_{\alpha_2+\alpha_3}(t)=I+tE_{24},\quad
X_{\alpha_1+\alpha_2+\alpha_3}(t)=I+tE_{14},
$$
so that, multiplying in increasing height,
$$
\prod_{\gamma\in\Phi^+}X_\gamma(x_\gamma)=
\begin{pmatrix}
1 & x_{\alpha_1} & x_{\alpha_1}x_{\alpha_2}+x_{\alpha_1+\alpha_2}
& x_{\alpha_1}x_{\alpha_2}x_{\alpha_3}+x_{\alpha_1}x_{\alpha_2+\alpha_3}+x_{\alpha_1+\alpha_2+\alpha_3}\\
0 & 1 & x_{\alpha_2} & x_{\alpha_2}x_{\alpha_3}+x_{\alpha_2+\alpha_3}\\
0 & 0 & 1 & x_{\alpha_3}\\
0 & 0 & 0 & 1
\end{pmatrix}.
$$
For the counting argument, it is convenient to change coordinates, so we set
$$
u:=x_{\alpha_1},\qquad v:=x_{\alpha_2},\qquad w:=x_{\alpha_3},
$$
$$
x:=x_{\alpha_1+\alpha_2}+x_{\alpha_1}x_{\alpha_2},\qquad
y:=x_{\alpha_2+\alpha_3}+x_{\alpha_2}x_{\alpha_3},\qquad
z:=x_{\alpha_1+\alpha_2+\alpha_3}+x_{\alpha_1}x_{\alpha_2+\alpha_3}+x_{\alpha_1}x_{\alpha_2}x_{\alpha_3}.
$$
These six polynomials are precisely the entries above the diagonal in the product above, so that in the new coordinates the product takes the simple form
$$
n(u,v,w,x,y,z)=\begin{pmatrix}
1 & u & x & z \\
0 & 1 & v & y \\
0 & 0 & 1 & w \\
0 & 0 & 0 & 1
\end{pmatrix}.
$$
Here $u,v,w$ are the coordinates of the simple roots $\alpha_1,\alpha_2,\alpha_3$, while $x,y,z$ correspond to $\alpha_1+\alpha_2,\ \alpha_2+\alpha_3,\ \alpha_1+\alpha_2+\alpha_3$, respectively. Taking $\dot w_0=J\in N_G(A)(\mathcal O)$, and allowing reordering and individual multiplication by units in $\mathcal O^\times$, we may take the Pl\"ucker coordinate polynomials to be
$$
\begin{aligned}
p_{11}&=u,\quad p_{13}=x,\quad p_{14}=z,\\
p_{21}&=v,\quad p_{22}=y,\quad p_{23}=x-uv,\quad p_{24}=z-uy,\quad p_{25}=vz-xy,\\
p_{31}&=w,\quad p_{32}=y-vw,\quad p_{33}=z-wx-uy+uvw.
\end{aligned}
$$

In the notation of \S\ref{sect:perturbation}, the six coordinate functions $u,v,w,x,y,z$ are exactly the $q_\gamma$, and each has the form $x_\gamma+f_\gamma$ with $f_\gamma\in F[x_\beta:\beta<\gamma]$. We apply Corollary~\ref{cor:practical} with the section $\mathcal R$ fixed above and with this family $q_\gamma$. Writing $r=(r_1,r_2,r_3)$, the corresponding Pl\"ucker-constrained set $S_r$ is
$$
\begin{aligned}
S_r
&=\left\{\mathbf x\in F^{\Phi^+}\ \middle|\ x_{\alpha_1}=x_{\alpha_2}=x_{\alpha_3}=0,\quad q_\gamma(\mathbf x)\in\mathcal R\ \ (\gamma\notin\Delta),\quad v(p_{ij}(\mathbf x))\ge -r_i\right\}\\
& \stackrel{\sim}{=} \left\{(u,v,w,x,y,z)\in F^6\ \middle|\ u=v=w=0,\quad x,y,z\in\mathcal R,\quad v(p_{ij})\ge -r_i\ \text{for all }i,j\right\}\\
& \stackrel{\sim}{=} \left\{(x,y,z)\in\mathcal R^3\ \middle|\ 
\begin{aligned}
&v(x)\ge-\min\{r_1,r_2\},\qquad v(y)\ge-\min\{r_2,r_3\},\\
&v(z)\ge-\min\{r_1,r_2,r_3\},\qquad v(x)+v(y)\ge-r_2
\end{aligned}
\right\},
\end{aligned}
$$
where the middle line uses $q_{\alpha_1}=u$, $q_{\alpha_2}=v$, $q_{\alpha_3}=w$, $q_{\alpha_1+\alpha_2}=x$, $q_{\alpha_2+\alpha_3}=y$, $q_{\alpha_1+\alpha_2+\alpha_3}=z$, and the last line evaluates the polynomials listed above at $u=v=w=0$, which gives $p_{11}=p_{21}=p_{31}=0$, $p_{13}=p_{23}=x$, $p_{14}=p_{24}=p_{33}=z$, $p_{22}=p_{32}=y$ and $p_{25}=-xy$.

\begin{lem}\label{lem:count-pairs}
Let $\mathcal R\subset F$ be a section of $F/\mathcal O$ containing $0$, and let $A,B,r\in\mathbb Z_{\ge0}$. Then
$$\#\left\{(x,y)\in\mathcal R^2:\ v(x)\ge-A,\quad v(y)\ge-B,\quad v(xy)\ge-r\right\}
\ \le\ q^A+q^B+(r+1)\,q^{\min\{r,\,A+B\}}.$$
\end{lem}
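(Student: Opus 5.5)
Split the set according to whether $x$ or $y$ vanishes, and otherwise according to the depths of $x$ and $y$. The basic counting fact, already used for $\Sp_4$ and $\SL_3$, is this: since $\mathcal R$ meets each coset of $\mathcal O$ exactly once, the number of $z\in\mathcal R$ with $v(z)\ge -K$ is $q^K$ for $K\ge 0$. Moreover $z\in\mathcal R$ with $z\notin\mathcal O$ has exactly $q^{K}-q^{K-1}\le q^K$ elements of depth exactly $K\ge1$, and the only element of $\mathcal R\cap\mathcal O$ is $0$.

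\emph{Case $x=0$.} The conditions $v(x)\ge -A$ and $v(xy)\ge -r$ hold automatically, so the pairs $(0,y)$ number at most $q^B$. \emph{Case $y=0$.} Symmetrically, the pairs $(x,0)$ number at most $q^A$. These two cases account for the terms $q^A+q^B$.

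\emph{Case $x,y\neq0$.} Since both lie in $\mathcal R\setminus\{0\}$, they lie outside $\mathcal O$. Write $a=-v(x)\ge1$ and $b=-v(y)\ge1$. The constraints become $a\le A$, $b\le B$ and $a+b\le r$. For a fixed pair $(a,b)$, the number of choices is at most $q^{a}q^{b}=q^{a+b}\le q^{\min\{r,A+B\}}$.

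It remains to count admissible pairs $(a,b)$. The naive count of pairs with $a+b\le r$ is quadratic in $r$, which is too many, so sum by total depth $n=a+b$ instead. For each $n\le\min\{r,A+B\}$ there are at most $n-1$ admissible pairs, each contributing at most $q^{a}q^{b}=q^n$. Hence the total is bounded by $\sum_{n=2}^{\min\{r,A+B\}}(n-1)q^n$.

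To reach the stated factor $(r+1)$, use a sharper per-depth count. The number of $x$ of depth exactly $a$ is at most $(1-q^{-1})q^a\le q^a$. So the contribution of total depth $n$ is at most $\sum_{a}q^{a}q^{n-a}=(\#\text{pairs})\,q^n$. Summing the geometric series in $n$ gives $\sum_{n\le M}(n-1)q^n\le (M-1)q^M\cdot\frac{q}{q-1}\le 2Mq^M$, where $M=\min\{r,A+B\}$. I expect this constant bookkeeping to be the main obstacle: the bound $2M\le r+1$ fails for large $r$.

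To avoid the factor $2$, fix the depth $a$ of $x$ and then count $y$ with $b\le\min\{B,r-a\}$ all at once. This gives at most $q^{\min\{B,r-a\}}$ choices of $y$, so the case contributes at most $\sum_{a=1}^{\min\{A,r\}}q^{a}q^{\min\{B,r-a\}}$. Each summand has exponent $\le\min\{A+B,r\}$ because $a\le A$. There are at most $r$ values of $a$, so the total is at most $r\,q^{\min\{r,A+B\}}\le(r+1)q^{\min\{r,A+B\}}$. This is the clean route I would write up, and it gives the lemma.
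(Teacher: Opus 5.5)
Your final route is correct and is essentially the paper's argument: you split off $x=0$ and $y=0$, then fix the depth $a$ of $x$ and bound the admissible $y$ by $q^{\min\{B,r-a\}}$, so each of the at most $r$ summands is $q^{\min\{a+B,r\}}\le q^{\min\{r,A+B\}}$. The earlier detour through the total depth $n=a+b$ is unnecessary and can be dropped from the write-up.
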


\begin{proof}
If $x=0$, then $xy=0$ and only $v(y)\ge-B$ is required, which holds for $q^B$ elements $y\in\mathcal R$; the case $y=0$ contributes $q^A$ pairs for the same reason.

Suppose that $x,y\neq0$. As $0$ represents the coset $\mathcal O$ in $\mathcal R$, an element of $\mathcal R$ is integral only if it is $0$; hence $v(x)=-i$, $v(y)=-j$ with $i,j\ge1$, and the conditions become $i\le A$, $j\le B$, $i+j\le r$. Since at most $q^i$ elements of $\mathcal R$ have depth $i$, and for such an $x$ at most $q^{\min\{B,\,r-i\}}$ elements $y$ are admissible, these pairs number at most
$$\sum_{i\le\min\{A,r\}}q^{i}\,q^{\min\{B,\,r-i\}}
=\sum_{i\le\min\{A,r\}}q^{\min\{i+B,\,r\}}
\le(r+1)\,q^{\min\{r,\,A+B\}} .$$
\end{proof}

By Lemma~\ref{lem:count-pairs} with $A=\min\{r_1,r_2\}$, $B=\min\{r_2,r_3\}$ and $r=r_2$, since $A,B\le r_2$, we know that the number of admissible pairs $(x,y)$ is at most
$$\ll(r_2+1)\,q^{\min\{r_2,\ \min\{r_1,r_2\}+\min\{r_2,r_3\}\}}\ \le\ (r_2+1)\,q^{\min\{r_2,\ r_1+r_3\}},$$
while, once $x$ and $y$ are fixed, $z$ has at most $q^{\min\{r_1,r_2,r_3\}}$ admissible values. Hence
$$|S_r|\ll(r_2+1)\,q^{\min\{r_2,\,r_1+r_3\}+\min\{r_1,r_2,r_3\}}
\ll q^{\frac56(r_1+r_2+r_3)},$$
the polynomial factor being absorbed by a smaller $\varepsilon$.

Suppose now that $p\neq2$, so that $p\nmid\det(C_{\SL_4})=4$. Then Corollary~\ref{cor:practical} shows that the Kloosterman sums for $\SL_4$ satisfy a nontrivial bound.

\subsection{\texorpdfstring{$G_2$}{G2}}

\subsubsection{Preparation}\label{sec:G2prep}

Let $G=G_2$, and let $\Delta=\{\alpha,\beta\}$ be the set of simple roots, where $\alpha$ denotes the short root and $\beta$ the long root. The positive roots are then
$$\Phi^+=\{\alpha,\beta,\alpha+\beta,2\alpha+\beta,3\alpha+\beta,3\alpha+2\beta\},$$
and the fundamental weights are given by
$$\omega_1=2\alpha+\beta,\qquad \omega_2=3\alpha+2\beta .$$

First, we determine the Pl\"ucker coordinates for $G_2$. Let $V_1,V_2$ be the fundamental representations with highest weights $\omega_1,\omega_2$, respectively. In this case $V_1$ is the $7$-dimensional standard representation and $V_2$ is the $14$-dimensional adjoint representation of $G_2$, so we shall obtain $21$ coordinates.

We fix a weight basis $v_1,v_2,\ldots,v_7$ of $V_1$, with respective weights
$$\omega_1,\ \alpha+\beta,\ \alpha,\ 0,\ -\alpha,\ -(\alpha+\beta),\ -\omega_1,$$
and let the lowering operators act by
$$f_1v_1=v_2,\quad f_2v_2=v_3,\quad f_1v_3=v_4,\quad f_1v_4=2v_5,\quad f_2v_5=v_6,\quad f_1v_6=v_7,$$
all remaining $f_iv_j$ being $0$. Write
$$\pi_1(g)v_j=\sum_{i=1}^7 g_{ij}(g)v_i,\qquad M(g):=(g_{ij}(g))_{1\le i,j\le 7}.$$
Since $\ell_1:=v_7^\vee$ is a highest weight vector of $V_1^*$, this gives $7$ Pl\"ucker coordinates
$$x_j(g):=\langle \pi_1(g)v_j,v_7^\vee\rangle=g_{7j}(g),\qquad 1\le j\le 7.$$

For $V_2$ we use the decomposition
$$\bigwedge^2 V_1\cong V_1\oplus V_2,$$
in which $v_1\wedge v_2$ has weight $\omega_1+(\alpha+\beta)=\omega_2$, so that $V_2$ is generated by the highest weight vector $v_1\wedge v_2$. Put $v_{ij}=v_i\wedge v_j$ and let $\ell_2:=v_6^\vee\wedge v_7^\vee$, which has weight $\omega_2$. The matrix coefficients of $V_2$ are
$$y_{ij}(g):=\langle \pi_2(g)v_{ij},\ell_2\rangle
=\det\begin{pmatrix}
g_{6i}(g) & g_{6j}(g) \\
g_{7i}(g) & g_{7j}(g)
\end{pmatrix},\qquad 1\le i<j\le 7 .$$
They span a $14$-dimensional space; the relations among them come from the $V_1$-summand of $\bigwedge^2V_1$, whose highest weight vector is $\kappa_{\omega_1}=v_{14}-2v_{23}$:
$$\begin{aligned}
    &y_{14}=2y_{23},&& 2y_{15}=y_{24}, &&2y_{16}=y_{34},\\
    &y_{35}=y_{17}+y_{26},&& 2y_{27}=y_{45}, &&2y_{37}=y_{46},\\
    &y_{47}=2y_{56}.
\end{aligned}$$
Consequently we obtain a map $N\backslash G\to F^{21}$
$$Ng\mapsto(x_1,\ldots,x_7,\ y_{12},y_{13},y_{15},y_{23},y_{25},y_{26},y_{34},y_{35},y_{36},y_{37},y_{45},y_{56},y_{57},y_{67}).$$

Second, we give a coordinate expression for $N$. A direct computation yields the parametrizations $X_\gamma:F\to N_\gamma$ of the root subgroups:
$$\begin{aligned}
    X_\alpha(t)&= I+t(E_{12}+2E_{34}+E_{45}+E_{67})+t^2E_{35},\\
    X_\beta(t)&= I+t(E_{23}+E_{56}),\\
    X_{\alpha+\beta}(t)&=I+t(E_{13}-2E_{24}+E_{46}-E_{57})-t^2 E_{26}, \\
    X_{2\alpha+\beta}(t)&= I+t(-2E_{14}+E_{25}+E_{36}-E_{47})+t^2E_{17},\\
    X_{3\alpha+\beta}(t)&= I+t(E_{15}-E_{37}),\\
    X_{3\alpha+2\beta}(t)&= I-t(E_{16}+E_{27}).
\end{aligned}$$
Hence the map $n:F^6\to N$, $(a,b,c,d,e,f)\mapsto$
$$\begin{aligned}
    &\ X_\alpha(a)X_\beta(b)X_{\alpha+\beta}(c)X_{2\alpha+\beta}(d)X_{3\alpha+\beta}(e)X_{3\alpha+2\beta}(f+cd)\\
    =\,&\left(
\begin{array}{ccccccc}
 1 & a & a b+c & -2 (a c+d) & a d+e & a b d-a c^2-f & a (-b e+c d-f)-c e+d^2 \\
 0 & 1 & b & -2 c & d & b d-c^2 & -b e+c d-f \\
 0 & 0 & 1 & 2 a & a^2 & a^2 b+2 a c+d & -a (a c+2 d)-e \\
 0 & 0 & 0 & 1 & a & a b+c & -a c-d \\
 0 & 0 & 0 & 0 & 1 & b & -c \\
 0 & 0 & 0 & 0 & 0 & 1 & a \\
 0 & 0 & 0 & 0 & 0 & 0 & 1
\end{array}
\right)
\end{aligned}$$
parametrizes $N$. In the notation of \S\ref{sect:perturbation}, the last factor being $X_{3\alpha+2\beta}(f+cd)$ means that these six coordinates are the polynomials
$$q_\alpha=x_\alpha,\ q_\beta=x_\beta,\ q_{\alpha+\beta}=x_{\alpha+\beta},\  q_{2\alpha+\beta}=x_{2\alpha+\beta},\  q_{3\alpha+\beta}=x_{3\alpha+\beta},\quad q_{3\alpha+2\beta}=x_{3\alpha+2\beta}-x_{\alpha+\beta}x_{2\alpha+\beta}.$$

Third, we compute the polynomials $p_{ij}$. For the standard Chevalley $\mathcal O$-model compatible with the root subgroup parametrizations above, choose $\dot w_0\in N_G(A)(\mathcal O)$ such that $\pi_1(\dot w_0)$ is anti-diagonal, with entries from left to right being $1,-1,1,-1,1,-1,1$. Among the seven coordinates $x_j(\dot w_0n(\mathbf x))$
the first two equal $1$ and $a$, so we keep only the remaining five:
$$p_{1,j}(\mathbf x)=x_{j+2}(\dot w_0\,n(\mathbf x)),\qquad 1\le j\le 5.$$
Likewise, among
the fourteen coordinates $y_{ij}(\dot w_0n(\mathbf x))$ listed above the first two
equal $1$ and $b$. We keep the remaining twelve.
Setting the simple-root coordinates $a=b=0$, we have
$$p_{1,1}=c,\quad p_{1,2}=-2d,\quad p_{1,3}=e,\quad p_{1,4}=-f,\quad p_{1,5}=d^2-ce,$$
$$\begin{aligned}
p_{2,1}&=d,\quad p_{2,2}=-c,\quad p_{2,3}=-e,\quad p_{2,4}=f,\quad p_{2,5}=-2c^2,\quad p_{2,6}=cd,\\
p_{2,7}&=-c^3,\quad p_{2,8}=c^2d-cf,\quad p_{2,9}=2(ce-d^2),\quad p_{2,10}=-c^2e+df,\\
p_{2,11}&=2cde-d^3-ef,\quad p_{2,12}=c^2d^2-c^3e-cdf+f^2.
\end{aligned}$$
Consequently, writing $\mathbf r=(r,s)$, the Pl\"ucker-constrained set $S_{r,s}$ of Corollary~\ref{cor:practical} is
$$
\begin{aligned}
S_{r,s}
& \stackrel{\sim}{=} \left\{(c,d,e,f)\in\mathcal R^4\ \middle|\ v(p_{1,j})\ge -r\ (1\le j\le 5),\quad v(p_{2,j})\ge -s\ (1\le j\le 12)\right\}\\
&=\left\{(c,d,e,f)\in\mathcal R^4\ \middle|\ 
\begin{aligned}
&v(c),\,v(d),\,v(e),\,v(f)\ \ge\ -\min\{r,s\},\qquad v(d^2-ce)\ \ge\ -\min\{r,s\},\\
&v(cd),\,v(c^3),\,v(c^2d-cf),\,v(c^2e-df)\ \ge\ -s,\\
&v(d^3-2cde+ef)\ \ge\ -s,\qquad v(c^2d^2-c^3e-cdf+f^2)\ \ge\ -s
\end{aligned}
\right\}.
\end{aligned}
$$

\subsubsection{Estimation}\label{sect:estimation}

We keep the notation of the previous subsection and put
$$m:=\min\{r,s\},\qquad
L_c:=\operatorname{dep}c,\quad L_d:=\operatorname{dep}d,\quad L_e:=\operatorname{dep}e,
\qquad\text{where}\quad \operatorname{dep}z:=\max\{0,-v(z)\}.$$
Since $0\in\mathcal R$ and $\mathcal R$ meets each coset of $\mathcal O$ exactly
once, we have $\operatorname{dep}z=0$ if and only if $z=0$ for $z\in\mathcal R$;
moreover $\mathcal R$ meets every coset of $\mathfrak p^{-R}$ in at most $q^R$
points.

To bound $S_{r,s}$ we fix the depths of the coordinates $c,d,e$ and count, for
each fixed choice of these depths, the number of points of $S_{r,s}$ having
them. Fix $(L_c,L_d,L_e)$. By the conditions $v(c),v(d),v(e)\ge-m$ we have
$$0\le L_c,L_d,L_e\le m,$$
so there are at most $(m+1)^3\ll (1+r+s)^3$ possible choices of
$(L_c,L_d,L_e)$. Hence it is enough to prove, uniformly in $(L_c,L_d,L_e)$,
$$\bigl|\{(c,d,e,f)\in S_{r,s}:\
(\operatorname{dep}c,\operatorname{dep}d,\operatorname{dep}e)
=(L_c,L_d,L_e)\}\bigr|\ \ll\ q^{(1-\varepsilon)(r+s)},$$
since summing over the $O((1+r+s)^3)$ possible choices of the depths produces
only a polynomial factor, which is absorbed by decreasing $\varepsilon$ if
necessary.

All the counting below is carried out with these depths fixed. That is, when we
say that a variable has at most $q^k$ choices, we always mean that, once
$(L_c,L_d,L_e)$ are fixed, there are at
most $q^k$ possibilities for it in $\mathcal R$.

Of the conditions defining $S_{r,s}$ we shall only use
$$v(c),\,v(d),\,v(e),\,v(f),\,v(d^2-ce)\ \ge\ -m,\qquad
v(c^3),\,v(cd)\ \ge\ -s,\qquad
v(d^3-2cde+ef)\ \ge\ -s,$$
together with the quadratic condition
$$f^2-(cd)\,f+c^2(d^2-ce)\ \in\ \mathfrak p^{-s} .$$

\begin{lem}[Quadratic estimate]\label{lem:quadratic}
Let $\gamma,\delta\in F$ and $R\ge0$. Then
$$\#\{\,z\in\mathcal R\ :\ z^2+\gamma z+\delta\in\mathfrak p^{-R}\,\}\ \ll\ q^{R/2},$$
with an implied constant depending only on $F$.
\end{lem}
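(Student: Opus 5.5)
The plan is to complete the square, which requires dividing by $2$. Write $z^2+\gamma z+\delta=(z+\gamma/2)^2-\Delta$ with $\Delta:=\gamma^2/4-\delta$. The condition becomes $w^2-\Delta\in\mathfrak p^{-R}$, where $w:=z+\gamma/2$. As $z$ ranges over $\mathcal R$, the map $z\mapsto w$ is injective, and its image meets each coset of $\mathcal O$ exactly once. So it suffices to count the cosets $w+\mathcal O$ that contain a solution $w$ of $v(w^2-\Delta)\ge -R$. The implied constant may absorb $|2|^{-1}$ and similar quantities; these are harmless, since they depend only on $F$.

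\textbf{Case 1: $\Delta$ is small or $w$ is shallow.} Suppose $v(w)\ge -R/2$. Then $w$ lies in $\mathfrak p^{-\lceil R/2\rceil}$, which contains $q^{\lceil R/2\rceil}\ll q^{R/2}$ cosets of $\mathcal O$. Done.

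\textbf{Case 2: $v(w)<-R/2$.} Then $v(w^2)<-R$. The condition $v(w^2-\Delta)\ge -R$ forces $v(\Delta)=2v(w)$, so $\Delta$ is fixed with $v(\Delta)=-2k$, where $k>R/2$. Fix any solution $w_0$. For another solution $w$, we have
\[
(w-w_0)(w+w_0)=(w^2-\Delta)-(w_0^2-\Delta)\in\mathfrak p^{-R}.
\]
Since $v(w-w_0)+v(w+w_0)\ge -R$ and $v\bigl((w-w_0)+(w+w_0)\bigr)=v(2w)=v(2)-k$, at least one of $w\mp w_0$ has valuation $\ge v(2)-k$. The other then has valuation $\ge -R-v(2)+k$. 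To see this, note that if both were $<v(2)-k$, their sum $2w$ would have valuation $\ge \min$ of the two, and their product would have valuation $<2v(2)-2k<-R$ up to a constant; this contradicts the condition, once constants depending only on $v(2)$ are absorbed.

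Hence $w\in \pm w_0+\mathfrak p^{\,k-R-c}$, where $c$ depends only on $F$. Since $k>R/2$, the number of $\mathcal O$-cosets in each such ball is at most $q^{R+c-k}\le q^{R/2+c}$. Two balls give $\ll q^{R/2}$.

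\textbf{Main obstacle.} The only delicate point is the second case: extracting from $v\bigl((w-w_0)(w+w_0)\bigr)\ge -R$ that one factor is quite small. Doing this uniformly in the residue characteristic requires careful bookkeeping of $v(2)$. Otherwise, if both factors had valuation around $-R/2$, the count could be as large as $q^{R/2}$ anyway. A cleaner way to organize this: the solutions satisfy $\min\bigl(v(w-w_0),v(w+w_0)\bigr)\ge -R/2-\text{const}$, or one of them is even smaller. In either case $w$ lies in the union of two balls of radius $q^{R/2+O(1)}$ around $\pm w_0$, each containing $\ll q^{R/2}$ cosets of $\mathcal O$. This simple two-ball bound should suffice in all cases, and I would present it that way.
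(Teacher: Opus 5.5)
Your core mechanism is the paper's. Since $w-w_0=z-z_0$ and $w+w_0=z+z_0+\gamma$, your factorization is literally the paper's identity $(z-z_0)(z+z_0+\gamma)=(z^2+\gamma z+\delta)-(z_0^2+\gamma z_0+\delta)\in\mathfrak p^{-R}$. Completing the square, and dividing by $2$, therefore buys nothing.

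As written, though, the key step of your Case 2 runs the wrong way. From $(w-w_0)+(w+w_0)=2w$ with $v(2w)=v(2)-k$, the ultrametric inequality says that at least one of $w\mp w_0$ has valuation $\le v(2)-k$, not $\ge$. It is this upper bound, combined with $v(w-w_0)+v(w+w_0)\ge -R$, that forces the \emph{other} factor to have valuation $\ge k-R-v(2)$. A lower bound on one factor gives no lower bound on the other. Your ``to see this'' argument also produces no contradiction: a product of valuation $<2v(2)-2k$ need not have valuation $<-R$ when $v(2)>0$, and absorbing constants cannot create one. There is a second, harmless slip: each ball $\pm w_0+\mathfrak p^{k-R-c}$ meets $\max\{1,q^{R+c-k}\}$ cosets of $\mathcal O$, not $q^{R+c-k}$. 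With the inequality reversed, your two-case argument is correct, with a constant of size about $q^{v(2)}$.

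The case split and the $v(2)$ bookkeeping are avoidable. The statement your last paragraph is reaching for concerns the maximum of the two valuations, not the minimum. Since $v(z-z_0)+v(z+z_0+\gamma)\ge -R$ and valuations are integers, the larger of the two is $\ge -\lfloor R/2\rfloor$. Hence every solution lies in $(z_0+\mathfrak p^{-\lfloor R/2\rfloor})\cup(-z_0-\gamma+\mathfrak p^{-\lfloor R/2\rfloor})$. As $\mathcal R$ has one point in each coset of $\mathcal O$, there are at most $2q^{\lfloor R/2\rfloor}$ solutions. This is exactly the paper's proof. It gives the absolute constant $2$, uniformly in the residue characteristic, with no need to distinguish shallow from deep $w$.
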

\begin{proof}
If the set in question is empty, there is nothing to prove. Otherwise, fix $z_0\in\mathcal R$ such that $z_0^2+\gamma z_0+\delta\in\mathfrak p^{-R}$. For any other such $z\in\mathcal R$, we have
\[
(z-z_0)(z+z_0+\gamma)=(z^2+\gamma z+\delta)-(z_0^2+\gamma z_0+\delta)\in\mathfrak p^{-R}.
\]
Put $a=\lfloor R/2\rfloor$. Since valuations take integer values, at least one of the two factors has valuation at least $-a$. Consequently, every such $z$ belongs to
\[
\bigl(z_0+\mathfrak p^{-a}\bigr)\cup\bigl(-z_0-\gamma+\mathfrak p^{-a}\bigr).
\]
Each of these two cosets is a union of $q^a$ cosets of $\mathcal O$, and $\mathcal R$ contains at most one representative of each coset of $\mathcal O$. Therefore the number of possible $z$ is at most $2q^a\le 2q^{R/2}$. This proves the lemma.
\end{proof}

The counting is carried out in the order $c,d,e,f$, each step being controlled
by one of the conditions listed above. Then we obtain the following bound for positive depths.

\begin{prop}[Positive depths]\label{prop:positive-depth}
Assume that $L_c,L_d,L_e\ge1$, and put $m:=\min\{r,s\}$ and
$$E:=L_c+L_d+\min\{L_e,\,m-L_c\}+\min\Bigl\{s-L_e,\,\frac s2,\,m\Bigr\}.$$
Then the number of points of $S_{r,s}$ with $\operatorname{dep}c=L_c$,
$\operatorname{dep}d=L_d$ and $\operatorname{dep}e=L_e$ is $\ll q^{E}$, and
furthermore $E\le\frac{21}{22}(r+s)$.
\end{prop}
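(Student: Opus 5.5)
We count the points of $S_{r,s}$ with prescribed depths one variable at a time, in the order $c,d,e,f$, using one defining condition at each step. Then we bound $E$ by a small piecewise-linear optimisation.

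\emph{Counting $c,d,e$.} An element $z\in\mathcal R$ of depth $L$ lies in $\mathfrak p^{-L}$, so there are at most $q^{L_c}$ choices for $c$ and at most $q^{L_d}$ for $d$. For $e$ we use two bounds. Trivially there are at most $q^{L_e}$ choices. Alternatively, once $c\neq0$ and $d$ are fixed, the condition $v(d^2-ce)\ge -m$ says
$$e\in d^2c^{-1}+c^{-1}\mathfrak p^{-m}=d^2c^{-1}+\mathfrak p^{-(m-L_c)},$$
a single coset of $\mathfrak p^{-(m-L_c)}$. This gives at most $q^{\max(0,m-L_c)}$ choices, hence $q^{\min\{L_e,m-L_c\}}$ in total.

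\emph{Counting $f$.} With $c,d,e$ fixed we have three bounds:
\begin{itemize}
\item $v(f)\ge-m$ gives $q^m$;
\item the quadratic condition $f^2-(cd)f+c^2(d^2-ce)\in\mathfrak p^{-s}$ together with Lemma~\ref{lem:quadratic} gives $\ll q^{s/2}$;
\item since $e\ne0$, the condition $v(d^3-2cde+ef)\ge-s$ puts $f$ in the coset $e^{-1}(2cde-d^3)+\mathfrak p^{-(s-L_e)}$, giving $q^{s-L_e}$ (note $L_e\le m\le s$).
\end{itemize}
Multiplying the four bounds yields $\ll q^E$.

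\emph{Bounding $E$.} The inequality $E\le\frac{21}{22}(r+s)$ does not follow from the box constraints $L_c,L_d,L_e\le m$, $3L_c\le s$, $L_c+L_d\le s$ alone. For instance, $r=s$, $L_c$ small, $L_d=s-L_c$, $L_e=s/2$ gives $E=2s$. So we must also use the depth information carried by $v(d^2-ce)\ge-m$.

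Since $\operatorname{dep}(d^2)=2L_d$ and $\operatorname{dep}(ce)=L_c+L_e$, this condition forces one of two cases:
\begin{enumerate}
\item[(A)] $2L_d\le m$ and $L_c+L_e\le m$;
\item[(B)] $2L_d=L_c+L_e$ (cancellation).
\end{enumerate}
In each case we maximise $E$ over the polytope defined by
$$0\le L_c,L_d,L_e\le m,\qquad 3L_c\le s,\qquad L_c+L_d\le s,\qquad m=\min\{r,s\},$$
plus the case constraint. We split further according to which term of each $\min$ is active, and according to $r\ge s$ versus $r<s$. Each resulting piece is a linear program, so it suffices to check the vertices.

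In case (A) the bound is easy: $E\le L_c+\frac m2+(m-L_c)+\frac s2$, which is well below $\frac{21}{22}(r+s)$. In case (B) we substitute $L_d=(L_c+L_e)/2$ and optimise in $L_c,L_e$. I expect the extremal configuration, which produces the constant $\frac{21}{22}$, to come from balancing $L_e$ against $s-L_e$ and $s/2$ together with $3L_c\le s$, at some ratio $r/s$.

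This optimisation is the step I expect to be the main obstacle. The plan is to find the maximising vertex explicitly, e.g.\ by normalising $s=1$ and treating $r$ as a parameter. If case (B) with the listed constraints does not give $\frac{21}{22}$, the next step is to bring in the remaining conditions $v(c^2d-cf),\,v(c^2e-df)\ge-s$, which restrict $f$ further.
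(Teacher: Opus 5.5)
Your count of the choices for $c,d,e,f$ is correct and matches the paper's. Your dichotomy (A)/(B) from $v(d^2-ce)\ge-m$ is also the right extra input; it slightly refines the paper's implication $L_d>\frac m2\Rightarrow L_e=2L_d-L_c$.

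\textbf{The gap is the inequality $E\le\frac{21}{22}(r+s)$.} The proposal does not prove it, and the one step you assert for it is false. In case (A) you bound $E\le L_c+\frac m2+(m-L_c)+\frac s2=\frac{3m}2+\frac s2$. For $r=s$ this equals $2s=r+s$, which is the trivial bound.

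Moreover, (A) is not the easy case. Take $r=10$, $s=12$, $(L_c,L_d,L_e)=(4,5,6)$. This lies in (A) and gives $E=4+5+6+6=21=\frac{21}{22}(r+s)$, so the constant is attained there. By cancelling $L_c$ you discarded the constraint $3L_c\le s$, and that constraint is what produces $\frac{21}{22}$.

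To keep it, put $K:=\min\{m,\frac s2\}$ and $t:=\min\{m,\frac s3\}$. In (A) you have $L_c+L_e\le m$, so $\min\{L_e,m-L_c\}=L_e$ and
\[
E\le \frac m2+L_c+L_e+\min\{s-L_e,K\}\le \frac m2+\min\{m+K,\ s+t\}=:B_1 .
\]
A one-variable check in $s/m$ gives $B_1\le\frac{21}{22}(m+s)$, with equality at $s=\frac65m$. The paper obtains the same $B_1$ for all $L_d\le\frac m2$: it notes that $L_e\mapsto\min\{L_e,m-L_c\}+\min\{s-L_e,K\}$ is a tent with maximum $\min\{m-L_c+K,s\}$.

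\textbf{Case (B) is left open.} Only its part with $L_d>\frac m2$ is new, since if $L_d\le\frac m2$ then $L_c+L_e=2L_d\le m$ and you are already in (A). When $L_d>\frac m2$, you have $\min\{L_e,m-L_c\}=m-L_c$. Write $L_d=\frac m2+\theta$; the constraint $L_e\le m$ gives $0<\theta\le\frac{L_c}2$. Then
\[
E=\frac{3m}{2}+\min\Bigl\{s-m-\theta+L_c,\ K+\theta\Bigr\}.
\]
Now maximize over $\theta$:
\begin{itemize}
\item the limit $\theta\to0$ gives $B_1$ again;
\item $\theta=\frac{L_c}2$ gives $B_2=\frac{3m}2+\min\{s-m,K\}+\frac t2$;
\item the crossing point gives $B_3=m+\frac{s+K}2+\frac t2$, which is admissible only when $s\ge\frac65m$.
\end{itemize}
Splitting into $s\ge3m$, $2m\le s\le3m$ and $m\le s\le2m$ shows $B_2,B_3\le\frac{21}{22}(m+s)$.

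So no further conditions such as $v(c^2d-cf)\ge-s$ are needed, and your vertex-enumeration plan would work. As written, however, you have only proved the count $\ll q^E$, not the bound on $E$.
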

\begin{proof}
Fix the depths $L_c,L_d,L_e\ge1$. Recall that such depths satisfy
$$
L_c\le\min\Bigl\{m,\frac s3\Bigr\},\quad L_d,\,L_e\le m,\quad
L_d>\frac m2\ \Longrightarrow\ L_e=2L_d-L_c :
$$
the first two bounds follow from $v(c),v(d),v(e)\ge-m$ and $v(c^3)\ge-s$, and the
implication follows from $v(d^2-ce)\ge-m$. In particular, $L_c,L_d,L_e\le m$, and $c$ and
$d$ have at most $q^{L_c}$ and $q^{L_d}$ choices, respectively.

The condition $v(d^2-ce)\ge-m$ says that $ce\in d^2+\fp^{-m}$, i.e.\ that $e$ is
determined modulo $\fp^{-(m-L_c)}$; hence $e$ has at most
$q^{\min\{L_e,\,m-L_c\}}$ choices. Finally, $f$ satisfies
$$v(f)\ge-m,\quad ef\in(2cde-d^3)+\fp^{-s},\quad f^2-(cd)f+c^2(d^2-ce)\in\fp^{-s},$$
and these three conditions bound the number of choices for $f$ by
$$\ll\min\bigl\{q^{\,m},\ q^{\,s-L_e},\ q^{\,s/2}\bigr\},$$
using Lemma~\ref{lem:quadratic}. Multiplying the four estimates, we see
that the number of points of $S_{r,s}$ with the given depths is $\ll q^{E}$.

It remains to show that $E\le\frac{21}{22}(r+s)$. Put $t:=\min\{m,\frac s3\}$ and
$K:=\min\{m,\frac s2\}$. If $L_d\le\frac m2$, then for fixed $L_c$ the function
$L_e\mapsto\min\{L_e,m-L_c\}+\min\{s-L_e,K\}$ is a tent with maximum
$\min\{m-L_c+K,s\}$, so
$$E\le B_1:=\frac m2+\min\{m+K,\,s+t\}\le\frac{21}{22}(m+s)\le\frac{21}{22}(r+s).$$
If $L_d>\frac m2$, then $L_e=2L_d-L_c$ and $\min\{L_e,m-L_c\}=m-L_c$; writing
$L_d=\frac m2+\theta$ with $0<\theta\le\frac{L_c}2$, this becomes
$$E=\frac{3m}2+\min\{s-m-\theta+L_c,\ K+\theta\}.$$
The first entry decreases and the second increases in $\theta$, so the supremum is
obtained as $\theta\to0$, at $\theta=\frac{L_c}2$, or at the crossing, and
consequently $E\le\max\{B_1,B_2,B_3\}$, where
$$B_2:=\frac{3m}2+\min\{s-m,\,K\}+\frac t2,\qquad
B_3:=m+\frac{s+K}2+\frac t2 .$$
Here $B_1$ is the bound of the first case, and $B_2,B_3\le\frac{21}{22}(m+s)$ as
well: both are elementary one-variable estimates in $s/m$ (split according as
$s\ge3m$, $2m\le s\le3m$, or $s\le2m$), and for $B_3$ one uses in addition that
the crossing lies in the interval only when $s\ge\frac65m$. Since $m\le r$, we get
$E\le\frac{21}{22}(r+s)$ in all cases.
\end{proof}

The cases where one of the depths vanishes are easier.

\begin{prop}[Zero depth]\label{prop:zero-depth}
Assume that $L_cL_dL_e=0$. Then the number of $(c,d,e,f)\in S_{r,s}$ with these
depths is
$$\ll\ q^{\frac{5}{6}(m+s)} .$$
\end{prop}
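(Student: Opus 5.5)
We split according to which of $c,d,e$ vanishes. Recall that for $z\in\mathcal R$ we have $\operatorname{dep}z=0$ if and only if $z=0$, and that at most $q^{k}$ elements of $\mathcal R$ have depth $\le k$. In each case we substitute the vanishing variable into the conditions listed before Lemma~\ref{lem:quadratic}. We then count in the order $c,d,e,f$, as in Proposition~\ref{prop:positive-depth}, and bound the variable $f$ by the minimum of three estimates:
\begin{itemize}
\item the trivial bound $q^{m}$ coming from $v(f)\ge -m$;
\item the bound $q^{s-L_e}$ coming from $ef\in(2cde-d^3)+\mathfrak p^{-s}$, valid when $e\neq0$;
\item the bound $\ll q^{s/2}$ from Lemma~\ref{lem:quadratic}, applied to $f^2-(cd)f+c^2(d^2-ce)\in\mathfrak p^{-s}$.
\end{itemize}
Throughout we use $m\le s$. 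It then remains to verify an elementary linear inequality in $m$ and $s$, which we do by splitting according to the ratio $s/m$.

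\emph{Case $c=0$.} The condition $v(d^2-ce)\ge -m$ becomes $v(d^2)\ge-m$, so $d$ has at most $q^{m/2}$ choices, and $e$ has at most $q^{L_e}$ choices with $L_e\le m$. For $f$ we get $\ll q^{\min\{m,\,s-L_e,\,s/2\}}$ choices, where the middle term is present only if $e\ne0$. Since $L_e+\min\{s-L_e,\dots\}\le s$ and also $L_e+\min\{m,s/2\}\le m+\min\{m,s/2\}$, the exponent is at most
$$\frac m2+\min\Bigl\{s,\ m+\min\{m,\tfrac s2\}\Bigr\}.$$
If $s\le 2m$, we use the first entry: $\frac m2+s\le\frac56(m+s)$ holds precisely because $s\le 2m$. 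If $s\ge2m$, we use the second: the exponent is $\frac{5m}{2}$, and $\frac{5m}{2}\le\frac56(m+s)$ because $m+s\ge3m$.

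\emph{Case $d=0$.} The conditions give $L_c\le\min\{m,s/3\}$, and $v(ce)\ge-m$ gives $L_c+L_e\le m$ (so $L_e\le m$ if $c=0$). For $f$ we again have $\ll q^{\min\{m,s/2\}}$ choices. The exponent is therefore at most $m+\min\{m,s/2\}$. If $s\ge2m$ this is $2m\le\frac56\cdot3m\le\frac56(m+s)$. If $s\le2m$ it is $m+\frac s2$, and $m+\frac s2\le\frac56(m+s)$ reduces to $s\ge m/2$, which holds since $s\ge m$.

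\emph{Case $e=0$.} Here $v(d^2)\ge-m$ gives $L_d\le m/2$, and $v(c^3)\ge-s$ gives $L_c\le\min\{m,s/3\}$. The quadratic condition becomes $f^2-cdf+c^2d^2\in\mathfrak p^{-s}$, so $f$ has $\ll q^{\min\{m,s/2\}}$ choices. The exponent is at most $\min\{m,\tfrac s3\}+\tfrac m2+\min\{m,\tfrac s2\}$. We check three ranges:
\begin{itemize}
\item if $s\ge3m$, it equals $\frac{5m}{2}\le\frac56\cdot4m\le\frac56(m+s)$;
\item if $2m\le s\le3m$, it equals $\frac s3+\frac{3m}{2}$, and the required inequality $\frac s3+\frac{3m}{2}\le\frac56(m+s)$ reduces to $s\ge\frac43m$, which holds;
\item if $s\le2m$, it equals $\frac{5s}{6}+\frac m2\le\frac56(m+s)$.
\end{itemize}

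Summing the three cases, which overlap harmlessly, gives the bound $\ll q^{\frac56(m+s)}$. The only delicate step is the case $c=0$ with $s<2m$. There the trivial and quadratic bounds on $f$ are insufficient, and one must use the linear condition $ef\in-d^3+\mathfrak p^{-s}$ to trade the depth of $e$ against the number of choices of $f$; this is the step I would check most carefully.
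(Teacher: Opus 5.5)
Your proof is correct and follows the paper's argument. You use the same split according to which of $c,d,e$ vanishes, and the same three bounds on $f$ (trivial, linear via $ef\in(2cde-d^3)+\mathfrak p^{-s}$, and Lemma~\ref{lem:quadratic}), with the linear condition doing the essential work when $c=0$. The differences are cosmetic: your cases overlap rather than being disjoint, and for $e=0$ you bound $L_d\le m/2$ via $v(d^2)\ge -m$ instead of $L_d\le\min\{m,s/3\}$ via $v(d^3)\ge -s$, which gives exponent $\frac56$ there instead of the paper's $\frac79$ but still suffices.
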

\begin{proof}
Since $\operatorname{dep}z=0$ means $z=0$ for $z\in\CR$, at least one of $c,d,e$
vanishes and contributes a single class. Recall that a coset of $\fp^{-R}$ meets
$\CR$ in at most $q^{R}$ classes, that $v(f)\ge-m$, and that
$f^2-(cd)f+c^2(d^2-ce)\in\fp^{-s}$ allows $\ll q^{s/2}$ possible values of $f$ by
Lemma~\ref{lem:quadratic}.

If $c=0$, then $v(d^{2})\ge-m$, so $L_d\le\frac m2$; $e$ obeys only
$v(e)\ge-m$, while $d^{3}+ef\in\fp^{-s}$ puts $f$ in a coset of $\fp^{-(s-L_e)}$
when $e\ne0$ (if $e=0$ the term $q^{s-L_e}=q^{s}$ is dominated by $q^{m}$). Hence
the number of classes is bounded by a constant times
$$q^{\frac m2+L_e+\min\{m,\frac s2,s-L_e\}}\le q^{\frac m2+\min\{s,2m\}}
\le q^{\frac56(m+s)} .$$

If $c\ne0$ and $d=0$, then $v(ce)\ge-m$, i.e.\ $L_c+L_e\le m$, and the bound for
$f$ bounds the number of classes by a constant times
$$q^{L_c+L_e+\min\{m,\frac s2\}}\le q^{m+\min\{m,\frac s2\}}\le q^{\frac34(m+s)} .$$

If $c,d\ne0$ and $e=0$, then $v(c^{3}),v(d^{3})\ge-s$, i.e.\
$L_c,L_d\le\min\{m,\frac s3\}$, and the bound for $f$ bounds the number of classes by a constant times
$$q^{2\min\{m,\frac s3\}+\min\{m,\frac s2\}}\le q^{\frac79(m+s)} .$$

Combining these three cases gives the asserted bound.
\end{proof}

Combining Propositions
\ref{prop:positive-depth} and \ref{prop:zero-depth}, we obtain the following bound for $|S_{r,s}|$.

\begin{cor}\label{cor:G2-count}
$$|S_{r,s}|\ \ll\ (1+r+s)^3\,q^{\frac{21}{22}(r+s)} .$$
\end{cor}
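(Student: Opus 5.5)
The plan is to partition $S_{r,s}$ according to the depth triple $(\operatorname{dep}c,\operatorname{dep}d,\operatorname{dep}e)=(L_c,L_d,L_e)$. We then bound each piece by Proposition~\ref{prop:positive-depth} or Proposition~\ref{prop:zero-depth}, and sum over the possible triples.

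First we record the range of the depths. Every point of $S_{r,s}$ satisfies $v(c),v(d),v(e)\ge -m$ with $m=\min\{r,s\}$. Hence each depth lies in $\{0,1,\dots,m\}$, and so the number of admissible triples is at most $(m+1)^3\le(1+r+s)^3$. Since every point of $S_{r,s}$ has exactly one depth triple, we get
$$|S_{r,s}|\le\sum_{(L_c,L_d,L_e)\in\{0,\dots,m\}^3}\#\{(c,d,e,f)\in S_{r,s}:\text{depths}=(L_c,L_d,L_e)\}.$$

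Next we bound each summand uniformly, splitting into two cases.

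\emph{Positive depths.} If $L_c,L_d,L_e\ge1$, Proposition~\ref{prop:positive-depth} bounds the summand by $\ll q^{E}$ with $E\le\frac{21}{22}(r+s)$. The implied constant depends only on $F$ (through Lemma~\ref{lem:quadratic}) and not on the depths.

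\emph{Some depth zero.} If $L_cL_dL_e=0$, Proposition~\ref{prop:zero-depth} gives $\ll q^{\frac56(m+s)}$. Since $m\le r$ and $\frac56\le\frac{21}{22}$, this is at most $q^{\frac{21}{22}(r+s)}$.

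Multiplying this uniform bound by the number of triples gives $|S_{r,s}|\ll(1+r+s)^3q^{\frac{21}{22}(r+s)}$.

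The only point needing care is that the implied constants in the two propositions are independent of $(L_c,L_d,L_e)$ and of $(r,s)$. Checking the proofs, these constants come only from the factor $2$ in Lemma~\ref{lem:quadratic} and from the fact that $\mathcal R$ meets each coset of $\mathfrak p^{-R}$ in at most $q^R$ points, so they are absolute. With that confirmed the corollary follows immediately.
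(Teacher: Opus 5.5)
Your proposal is correct and follows the paper's own argument. The paper likewise partitions $S_{r,s}$ by the depth triple $(L_c,L_d,L_e)\in\{0,\dots,m\}^3$ and bounds each piece uniformly by Propositions~\ref{prop:positive-depth} and~\ref{prop:zero-depth}, using $\frac56(m+s)\le\frac{21}{22}(r+s)$, absolute implied constants, and at most $(m+1)^3\le(1+r+s)^3$ triples.
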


\subsubsection{Conclusion}

\begin{thm}\label{thm:G2-power}
Let $G=G_2$. Then the Kloosterman sums $\{\Kl_d\}_{d\ge1}$ on $G$ satisfy a
nontrivial bound: for all sufficiently large $d$ there exist $\varepsilon>0$ and
$C>0$ such that
$$\bigl|\Kl_d(t)\,\delta^{1/2-\varepsilon}(t)\bigr|\le C,\qquad t\in A .$$
\end{thm}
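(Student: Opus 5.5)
The plan is to apply Corollary~\ref{cor:practical} to $G=G_2$, with the Pl\"ucker-constrained set $S_{r,s}$ computed in \S\ref{sec:G2prep}. Two things need checking first. The hypothesis $p\nmid\det(C_{G_2})$ holds automatically, since the Cartan matrix of $G_2$ has determinant $1$. This is why no restriction on $p$ appears in the statement, and $G_2$ is split, semisimple and simply connected. Second, the data used in \S\ref{sec:G2prep} are admissible in the sense of Definition~\ref{defn:plucker-constrained}. Concretely, the family $q_\gamma$, with $q_{3\alpha+2\beta}=x_{3\alpha+2\beta}-x_{\alpha+\beta}x_{2\alpha+\beta}$, has the required triangular form. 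The discarded Pl\"ucker coordinates (those equal to $1$, $a$, $b$, or related to the kept ones by the linear relations $y_{14}=2y_{23}$ etc.) only impose conditions that are either vacuous or already implied. Rescaling by units and reordering does not change valuation conditions.

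Next I would turn Corollary~\ref{cor:G2-count} into the hypothesis of Corollary~\ref{cor:practical}. From
$$|S_{r,s}|\ll(1+r+s)^3q^{\frac{21}{22}(r+s)},$$
choose $\varepsilon=\frac1{44}$. Since $(1+r+s)^3q^{-\frac1{44}(r+s)}$ is bounded uniformly in $r,s\ge0$, we get $|S_{r,s}|\le C\,q^{(1-\varepsilon)(r+s)}$ for some constant $C$. Corollary~\ref{cor:G2-count} itself follows by summing Proposition~\ref{prop:positive-depth} and Proposition~\ref{prop:zero-depth} over the $O((1+m)^3)$ depth triples $(L_c,L_d,L_e)$. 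One caveat here: the depth of $f$ is not fixed, but it is counted inside each proposition. The zero-depth bound $q^{\frac56(m+s)}\le q^{\frac56(r+s)}$ is dominated by the positive-depth bound.

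Finally, Corollary~\ref{cor:practical} gives that $\{\Kl_d\}_{d\ge1}$ satisfies a nontrivial bound in the sense of \S\ref{sect:kloosterman}. That is, for all sufficiently large $d$ there are $\varepsilon_d,C_d$ with $|\Kl_d(t)\delta^{1/2-\varepsilon_d}(t)|\le C_d$. Here the reduction to $t=t_0\prod\alpha_i^\vee(\varpi^{r_i})$ via D\k{a}browski--Reeder ensures that the bound holds for all $t\in A$, because $\Kl_d(t)=0$ otherwise.

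I expect the main obstacle to be the verification of the explicit $G_2$ data rather than the formal deduction. This means confirming that the twelve retained $p_{2,j}$ together with the five $p_{1,j}$ really are all the independent Pl\"ucker polynomials up to units once $a=b=0$. In particular, the linear relations among the $y_{ij}$ coming from the $V_1\subset\wedge^2V_1$ summand involve factors of $2$. Over a field with residue characteristic $2$, relations such as $2y_{15}=y_{24}$ mean the constraint $v(y_{24})\ge -s$ is weaker than $v(y_{15})\ge-s$ by a bounded amount. I would handle this by noting that such constant discrepancies shift $r,s$ by a bounded amount, exactly as in Lemma~\ref{lem:shift}, and are absorbed into $C$. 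The only conditions actually used in Propositions~\ref{prop:positive-depth} and \ref{prop:zero-depth} appear directly among the listed $p_{ij}$ up to such constants, so the count is unaffected.
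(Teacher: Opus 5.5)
Your proposal is correct and is essentially the paper's proof. Corollary~\ref{cor:G2-count} gives $|S_{r,s}|\le C\,q^{(1-\varepsilon_0)(r+s)}$ once the factor $(1+r+s)^3$ is absorbed (your choice $\varepsilon_0=\frac1{44}$ works), and since $\det(C_{G_2})=1$ Theorem~\ref{main-thm} (i.e.\ Corollary~\ref{cor:practical}) applies for every $p$. Your extra checks on the $G_2$ data and on the bounded shifts from factors of $2$ when $p=2$ are sound additions the paper leaves implicit.
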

\begin{proof}
By Corollary~\ref{cor:G2-count} there are constants $\varepsilon_0>0$ and $C>0$
such that
$$|S_{r,s}|\le C\,q^{(1-\varepsilon_0)(r+s)},\qquad r,s\in\mathbb Z_{\ge0}.$$
Since $\det(C_{G_2})=1$,
the assumption $p\nmid\det(C_G)$ of Theorem~\ref{main-thm} holds for every $p$.
Theorem~\ref{main-thm} therefore applies.
\end{proof}

\smallskip
As an application, we record the regularity of Bessel distributions on $G_2$; we
refer to Section \ref{sec:intro} for the definition of the Bessel distribution $B_\pi$. Let $\psi = \psi_0$ be the generic character
on $N$ in \cite[Theorem 1.2]{CCL26}.

\begin{thm}\label{thm:G2-Bessel}
Let $G=G_2$ and let $\pi$ be an irreducible smooth admissible generic
representation of $G$ with respect to $\psi$. Then the Bessel distribution
$B_\pi$ is regular: there exist a dense subset $\Omega\subset G$ and a smooth
function $j_\pi$ on $\Omega$, locally integrable on $G$, such that
$$B_\pi(f)=\int_G j_\pi(g)\,f(g)\,dg,\qquad f\in\CS(G).$$
The function $j_\pi$ is unique.
\end{thm}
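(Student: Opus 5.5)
The plan is to deduce Theorem~\ref{thm:G2-Bessel} directly from Theorem~\ref{thm:G2-power} together with the criterion of \cite[Theorem 1.2]{CCL26}. That criterion says, roughly: if the Kloosterman sums attached to the generic character $\psi_0$ and to a suitable family of admissible compact open subgroups admit a nontrivial bound, then the Bessel distribution of every irreducible generic representation is represented on the dense open set $\Omega$ by a smooth, locally integrable function $j_\pi$. The structure of the proof is therefore one of verification rather than new analysis.

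\textbf{Step 1: match the Kloosterman sums.} I will check that the sums $\Kl_d=\Kl(\,\cdot\,;\psi_0,\dot w_0,K_d)$ of Section~\ref{sect:kloosterman} are exactly the sums required in \cite{CCL26}. Several points need confirming.
\begin{itemize}
\item The principal congruence subgroups $K_d$, for $d$ large, form a neighbourhood basis of the identity and satisfy $\psi_0|_{N_d}=1$.
\item Each $K_d$ has the Iwahori factorization $K_d=\overline N_dA_dN_d$, as recorded above; this is the admissibility condition used in \cite{CCL26}.
\item The choice of representative $\dot w_0\in N_G(A)(\mathcal O)$ is immaterial. Changing it multiplies $t$ by an element of $A(\mathcal O)$, and the bound in Theorem~\ref{thm:G2-power} is uniform in $t_0\in A(\mathcal O)$.
\end{itemize}
For $G_2$ we have $A\cap G^{\der}=A$, since $G_2$ is semisimple and simply connected. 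Hence the bound of Theorem~\ref{thm:G2-power} is precisely the nontrivial bound in the sense of \cite[Definition 3.17]{CCL26}, and it holds for all sufficiently large $d$. Theorem~\ref{thm:G2-power} was proved for an arbitrary fixed generic character, so in particular it applies to $\psi=\psi_0$.

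\textbf{Step 2: check the remaining hypotheses.} Here I will confirm that $\pi$ generic forces $\pi^\vee$ to be generic with respect to $\psi^{-1}$, so that $B_\pi$ is defined. I also need to confirm that \cite[Theorem 1.2]{CCL26} requires no further input, for example a hypothesis on $p$ or on $\det(C_G)$. For $G_2$ any such condition is vacuous, because $\det(C_{G_2})=1$.

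\textbf{Step 3: conclude.} The output of \cite[Theorem 1.2]{CCL26} is the existence of $\Omega$ and of $j_\pi$, smooth on $\Omega$ and locally integrable on $G$, with $B_\pi(f)=\int_G j_\pi f\,dg$. Uniqueness is automatic: two locally integrable functions defining the same distribution agree almost everywhere, and being smooth (hence continuous) on the dense set $\Omega$, they agree on $\Omega$.

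The main obstacle is Step 1, namely aligning the normalizations of \cite{CCL26} with those used here: the family of compact open subgroups, the choice of $\dot w_0$, and the identification of $\psi_0$ with a character for which $\psi_0|_{N_d}=1$. If \cite{CCL26} uses a different cofinal family of subgroups, my first attempt will be to compare it with $\{K_d\}$ by sandwiching, $K_{d'}\subset K'\subset K_d$. I would then bound the corresponding Kloosterman sets using the finite-index argument from the proof of Proposition~\ref{prop:reduct-d0}.
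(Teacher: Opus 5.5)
There is a genuine gap: you have misstated the criterion you are invoking. As the paper uses it, \cite[Theorem 1.2]{CCL26} does not ask only for a nontrivial bound for the Kloosterman sums of $G$ itself, the ones attached to $\dot w_0$. It asks for nontrivial bounds for the Kloosterman sums on \emph{every standard Levi subgroup} of $G$. Your Steps 1--3 verify only the Levi $G$ itself, via Theorem~\ref{thm:G2-power}. Your Step 2 list of remaining hypotheses (genericity of $\pi^\vee$, conditions on $p$ or $\det(C_G)$) never mentions the other Levi subgroups. So, as written, the argument does not reach the conclusion. The obstacle you single out, aligning the normalizations of $K_d$, $\dot w_0$ and $\psi_0$, is not where the missing content lies.

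The fix is short, and it is exactly the paper's proof. The standard Levi subgroups of $G_2$ are:
\begin{itemize}
\item the maximal torus $A$, for which there is nothing to prove;
\item the Levi subgroups of the two maximal parabolics, each isomorphic to $\GL_2$, for which the required nontrivial bound is the classical Weil-type bound for Kloosterman sums on $\GL_2$, as recorded in \cite[Section 4.1]{CCL26} (cf.\ Example~\ref{exam-GL_2});
\item $G$ itself, for which the bound is Theorem~\ref{thm:G2-power}.
\end{itemize}
With this added, your argument is complete.

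One minor point on uniqueness. Equality almost everywhere plus continuity on $\Omega$ forces equality on $\Omega$ only if every nonempty relatively open subset of $\Omega$ has positive Haar measure, for example if $\Omega$ is open. Density of $\Omega$ alone is not quite enough, so you should use that property of the set $\Omega$ produced by \cite{CCL26}.
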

\begin{proof}
By \cite[Theorem 1.2]{CCL26}, regularity holds if Kloosterman sums
on each standard Levi subgroup of $G = G_2$ have nontrivial bounds.
For $G = G_2$, standard Levi subgroups are isomorphic to the maximal torus $A$, $\GL_2$ and $G$. 
Since we already have the Weil bound for Kloosterman sums on $\GL_2$
(See for example \cite[Section 4.1]{CCL26}), the regularity follows from the nontrivial bound on $G$.
\end{proof}

\end{document}